\theoremstyle{plain}
\renewcommand{\theequation}{\arabic{section}.\arabic{equation}}
\renewcommand\thefigure{\thesection.\@arabic\c@figure}
\renewcommand{\thefigure}{\arabic{section}.\arabic{figure}}
\newtheorem{thm}{\bf Theorem}
\newtheorem{cor}{\bf Corollary}
\newtheorem{prop}{Proposition}[section]
\newtheorem{lmm}{\bf Lemma}
\theoremstyle{remark}
\newenvironment{lemma}{\begin{lmm}}{\end{lmm}}
\theoremstyle{remark}
\newtheorem{rem}{\bf Remark}[section]
\theoremstyle{definition}
\newtheorem{defn}{\bf Definition}[section]
\numberwithin{table}{section}
\def \ri {{\rm i}}
\newcommand{\refe}[1]{{\rm (\ref{#1})}}
\newcommand{\bs}[1]{\boldsymbol{#1}}
\def \hf {\hat{f}}
\def \d {{\rm d}}
\def \e {{\rm e}}
\def \ri {{\rm i}}
\def \bx {{\bs x}}
\def \hx {\hat{\bs x}}
\def \bk {{\bs k}}
\def \by {{\bs y}}
\def \bk {{\bs k}}
\def \bn {{\bs n}}
\def \bmm {{\bs m}}
\def \bss {{\bs s}}
\def \RR {{\mathbb R}}
\begin{document}
\bibliographystyle{plain}
 \baselineskip 13pt
	
 \title[FEM for Nonlocal Laplacian]
 {An Efficient Finite Element Method for Multi-dimensional Nonlocal Laplacian on Uniform Grids}
   \author[C. Sheng, \;  H. Li, \; L.-L. Wang,\;\;$\&$\;H. Yuan]{ 
		 \;\; Changtao Sheng${}^\dag$,\;\; Huiyuan Li${}^\ddag$, \;\; Huifang Yuan${}^{\star}$\;\;\; and\;\;  Li-Lian Wang${}^{\P}$}
	
	\thanks{${}^\dag$School of Mathematics, Shanghai University of Finance and Economics, Shanghai, 200433, China. Email: ctsheng@sufe.edu.cn (C. Sheng). The work of this author is partially supported by the National Natural Science Foundation of China (Nos. 12201385 and 12271365). \\
                \indent  ${}^{\ddag}$State Key Laboratory of Computer Science/Laboratory of Parallel Computing, Institute of Software,
 Chinese Academy of Sciences, Beijing 100190, China. The work of this author is partially supported by the National Natural Science Foundation of China (No. 12131005 and 11971016).  Email: huiyuan@iscas.ac.cn (H. Li).\\
                  \indent ${}^{\star}$Corresponding Author. School of Science, Harbin Institute of Technology, Shenzhen, Guangdong, China. The work of the  author is supported by research grants: 12301463 from NSFC, HA11409085, a start-up Grant from Shenzhen government and HA45001129, a start-up Grant from Harbin Institute of Technology, Shenzhen. Email:  yuanhuifang@hit.edu.cn (H. Yuan).  \\
                  \indent ${}^{\P}$Division of Mathematical Sciences, School of Physical and Mathematical Sciences, Nanyang Technological University, 637371, Singapore. The research of the  author is partially supported by Singapore  MOE AcRF Tier 1 Grant: RG95/23, and Singapore MOE AcRF Tier 2 Grant: MOE-T2EP20224-0012. Email: lilian@ntu.edu.sg (L.-L. Wang). 
         }
\begin{abstract} Computing the stiffness matrix for the finite element discretization of the nonlocal Laplacian on unstructured meshes is difficult, because the operator is nonlocal and can even be singular.   
In this paper, we focus on the $C^0$-piecewise linear finite element method (FEM) for the nonlocal Laplacian on uniform grids within a $d$-dimensional rectangular domain. 
By leveraging the connection between FE bases and  B-splines (having attractive convolution properties), we can reduce the involved $2d$-dimensional integrals for the stiffness matrix entries into integrations over $d$-dimensional balls with explicit integrands involving cubic B-splines and the kernel functions, which allows for  explicit study of the  singularities and accurate evaluations of such integrals in spherical coordinates. We show the nonlocal stiffness matrix has a block-Toeplitz structure, so the matrix-vector multiplication can be implemented  using fast Fourier transform (FFT).  In addition, when the interaction radius $\delta\to 0^+,$  the nonlocal stiffness matrix automatically reduces to the local one.  Although our semi-analytic approach on uniform grids cannot be extended to general domains with unstructured meshes, the resulting solver can seamlessly integrate with the grid-overlay (Go) technique for the nonlocal Laplacian on arbitrary bounded domains.   
\end{abstract}
 
 \subjclass[2000]{34B10, 65R20,  15B05, 41A25, 74S05.}	
\keywords{Nonlocal Laplacian, finite element method, B-splines, convolution, nonlocal stiffness matrix, semi-analytic method}

\maketitle

 \vspace*{-15pt}

\section{Introduction}
This paper concerns the finite element solution  of the following nonlocal constrained value problem posed on a $d$-dimensional rectangular domain $\Omega$:
\begin{equation}\label{1DNon}
\mathcal{L}_{\delta} u = f\;\;{\rm on}\;\;  \Omega\,;\quad
u = 0\;\; {\rm on}\;\; \Omega_{\delta},
\end{equation}
where $
\Omega_{\delta}=\left\{\bs x \in \mathbb{R}^{d} \backslash \Omega\,:\, \text {dist}(\bs x, \partial \Omega) \leq \delta\right\}$ and the nonlocal operator is defined as
\begin{equation}\label{NonLOper} 
\mathcal{L}_{\delta} u(\bx) =\int_{\mathbb{B}^d_\delta} (u(\bx+\bss)-u(\bx))\rho_{\delta}(|\bss|)\, \d \bss,
\end{equation}
with $\mathbb{B}^d_\delta$ being a $d$-dimensional ball of radius $\delta$ centered at $0$. 
We further assume the radial type kernel $\rho_{\delta}(|\bss|)$ is a non-increasing function with a bounded second-order moment:
\begin{equation}\label{ker2}
\int_{\mathbb{B}_\delta^d}\rho_{\delta}(|\bss|)|\bss|^2\d \bss=d,
\end{equation}
which particularly includes the fractional power kernel:
\begin{equation}\label{kernel}
\rho_\delta(|\bss|)=c_{d,\delta}^\alpha |\bss|^{-d-\alpha},\;\;\;
\end{equation}
where $c^\alpha_{d,\delta}$ is a normalization constant that satisfies \eqref{ker2}. 
Such nonlocal models with more general kernels in general domains  have many applications in e.g., solid mechanics, anomalous diffusion, image processing, and material failure (see e.g., \cite{andreu2010nonlocal,buades2010image,Du2012Analysis,Gilboa2007Nonlocal,Gilboa2008Nonlocal,gunzburger2010nonlocal,kao2010random,klafter2005anomalous,metzler2000random}). Indeed, modeling  differentiations and variations with integral operators can accommodate discontinuities such as fractures and material separation. A representative example is peridynamics \cite{Silling2000Reformulation}, a derivative-free integral continuum theory effective for crack nucleation, growth, and failure \cite{du2011mathematical,kilic2010coupling,oterkus2012peridynamic,silling2010peridynamic,silling2010crack}. Nonlocality also captures long-range interactions that classical local models often miss, especially in multiscale materials bridging molecular dynamics and classical elasticity \cite{askari2008peridynamics,Du2019Nonlocal}.    
 
 These benefits come at the expense of substantial computational and memory costs, thereby prompting a strong demand for developing  efficient numerical methods.
Successful attempts include quadrature-based finite-difference methods in  one dimension or a two-dimensional rectangular domain \cite{Tian2013analysis,TianJuDu2017}, which led to the notion of asymptotically compatible schemes. 
  We  refer to \cite{Chen2011Continuous,Wang2012,Tian2014Asymptotically,leng2022a} (for finite element methods in 1D), 
\cite{TianJuDu2015,Pasetto2022,Glusa2023an} (for smooth kernel on 2D rectangular domain),  \cite{Vollmann2019} (using adaptive Gauss-Kronrod quadrature rule for hypersingular kernel on 3D rectangular domain), spectral methods for nonlocal problems with periodic boundary conditions \cite{Du2016Asymptotically,Du2017fast} and on the sphere \cite{Slevinsky2018}. It is also noteworthy some other numerical approaches   \cite{Wang2014a,Du2020a,Yang2022On,Du2022Perfectly,Leng2021Asymptotically,Wang2024Stability} mostly on regular domains.
Nevertheless, the implementation of FEM on unstructured meshes for irregular domains becomes much more complicated \cite{DElia2021,Du2022on,chen2024}. To illustrate the intrinsic  difficulty, we consider \eqref{1DNon}-\eqref{kernel} in a $d$-dimensional bounded domain $\Omega=\Lambda$ with a triangulation $\mathcal T_\Lambda$ consisting of $K_\Lambda$ non-overlapping  elements $\{\mathcal E_k\}_{k=1}^{K_\Lambda}$. Let $\{\varphi_j\}$ be the FEM basis functions associated with $\mathcal T_\Lambda$. The underlying   stiffness matrix  requires to compute
 \begin{equation}\label{eq:Ah-global}
\begin{aligned}
A_\delta(\varphi_j,\varphi_{j'})
&\approx \sum_{k=1}^{K_\Lambda}\int_{\mathcal E_k}\int_{\Lambda\cap B_{\delta,h}(\bx)}
\big(\varphi_j(\by)-\varphi_j(\bx)\big)\big(\varphi_{j'}(\by)-\varphi_{j'}(\bx)\big)\,
\rho(\bx,\by)\, \d\by\, \d\bx \\
&\quad + 2\sum_{k=1}^{K_\Lambda}\int_{\mathcal E_k}
\varphi_j(\bx)\,\varphi_{j'}(\bx)\bigg(\int_{\Lambda_\delta\cap B_{\delta,h}(\bx)} \rho(\bx,\by)\, \d\by\bigg)\, \d\bx,
\end{aligned}
\end{equation}
where $B_{\delta,h}(\bx)$ denotes a polytopal approximation of the ball $B_{\delta}(\bx)$ of radius $\delta$ centered at $\bx$, and $\Lambda_{\delta}=\{\bs x \in \mathbb{R}^{d} \backslash \Lambda:\, \mathrm{dist}(\bs x, \partial \Lambda) \le \delta\}$. In 2D, D'Elia et al.\ \cite{DElia2021} developed and analyzed ball-approximation strategies to accurately evaluate integrals \eqref{eq:Ah-global} over the intersection between the interaction ball and a mesh element. 
The treatment of singular kernels necessitates sophisticated numerical quadrature, which makes implementation more demanding and may introduce additional computational cost. Subsequently, Du et al.\ \cite{Du2022on} study polygonal approximations of Euclidean interaction balls and analyze their impact on nonlocal operators with smooth kernels and solutions in the small/vanishing-horizon regime. To address the practical difficulties that certain ball-approximation strategies pose in 3D, Chen et al.\ \cite{chen2024} devised a combinatorial-map–based FEM for smooth kernels, which enables efficient $\delta$-neighborhood queries to evaluate the inner integrals in \eqref{eq:Ah-global} over $\Lambda\cap B_{\delta,h}(\bx)$, robust ball approximations, and fast matrix assembly for nonlocal operators. However, for hypersingular kernels $\rho(\bx,\by)$, more efficient high-order quadrature on unstructured meshes within the $\delta$-neighborhood is still required to accurately evaluate the corresponding hypersingular integrals. Moreover, each stiffness-matrix entry relies on the aforementioned $2d$-dimensional integral and thus incurs a significant computational burden.

\begin{wrapfigure}[11]{r}{0.3\textwidth} 
  \vspace{-6pt}                           
  \centering
  \includegraphics[width=0.28\textwidth]{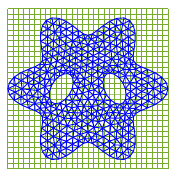}
  
  {\small Illustration of Go  in \cite{huang2024}}
  \label{fig:go}
\end{wrapfigure}

We intend to propose a different approach to tackle FEM for nonlocal Laplacian  on unstructured meshes by employing the grid-overlay (Go) technique proposed in \cite{huang2024,huang2025} 
(for the fractional Laplacian on arbitrary bounded domains, see the figure on the right). The key building block of this framework is a solver for the nonlocal problem on background uniform grids that enclose the irregular computational domain, leading to stiffness matrix that enables fast matrix–vector multiplications.   
 
Consider for example the FEM solution of  \eqref{1DNon}-\eqref{kernel}  on a two-dimensional bounded irregular  domain $\Lambda$.  We outline  the essential steps to integrate the FEM on uniform grids with the Go technique, dubbed as Go-FEM, as follows.
\begin{itemize}
  \item[(i)]  Triangulate $\Lambda$ into an unstructured simplicial mesh $\mathcal{T}_\Lambda$ that fits or approximately fits $\Lambda$ (see the blue mesh).  Overlay it with a background uniform  mesh   $\mathcal{R}_\Omega$ (see the green mesh) of a rectangular domain $\Omega\supset \Lambda.$

  \smallskip
\item[(ii)]  Construct the transfer matrix ${\mathbb T}_\Lambda^{\Omega}$ that maps data from the unstructured mesh $\mathcal {T}_\Lambda$ to the background grid $\mathcal{R}_{\Omega}$, e.g., the piecewise linear interpolation in \cite[(3.3)]{huang2024}.

\smallskip  
  \item[(iii)]  Compute the nonlocal stiffness matrix $\bs S_\Omega$ on the uniform grids  $\mathcal{R}_{\Omega},$ which should allow for fast matrix-vector multiplication.   
  
  \smallskip
  \item[(iv)] Form the Go-FEM solver as in \cite[(1.2)]{huang2024} on   $\mathcal {T}_\Lambda$:
\begin{equation}\label{GoFEM}
  \mathbb {D}_\Lambda^{-1} (\mathbb {I}_\Lambda^{\Omega})^\top\, \bs{S}_\Omega\, {\mathbb T}_\Lambda^{\Omega}\, {\bs u}_\Lambda = \bs f_\Lambda,
  \end{equation}
where $\bs {u}_\Lambda$ denotes the numerical solution on $\Lambda$, and $\mathbb D_\Lambda$ is diagonal with entries given by the column sums of $\mathbb T_\Lambda^{\Omega}$, which ensures a consistent, constant-preserving transfer.
\end{itemize}
\smallskip

\noindent In \cite{huang2024,huang2025}, Step-(iii) involves a finite difference solver for fractional Laplacian, and  the proposed Go-FD can take full advantages of both uniform-grid solvers in efficient matrix-vector multiplications and unstructured meshes for complex geometries and mesh adaptation.  Along this line, it is  essential to compute the nonlocal stiffness matrix on uniform grids. This serves as the purpose this paper and we shall report the full Go-FEM for nonlocal models in the future work \cite{ShenSH2025}.

We show in this paper that by leveraging the convolution structure of B-spline basis functions, the original $2d$-dimensional integrals in the bilinear form (see \eqref{BilinearF} below) can be reduced to $d$-dimensional integrals. 
More specifically, consider a uniform partition of $\Omega$ with mesh size $h_j$ along the $x_j$-direction and $1\le j\le d$, and define the $C^0$ piecewise linear tensor-product finite element nodal basis functions as $\{ \phi_{\bn}(\bx) = \prod_{j=1}^{d} \phi_{n_j}(x_j)\}_{1 \le n_j \le N_j}.$ 
Our main finding is that the entries of the stiffness matrix  can be expressed as
\begin{equation} \label{intro1}\begin{split}
(\bs S_\delta)_{ll^\prime} &
= \frac12 \int_{\Omega\cup\Omega_\delta}\int_{\mathbb{B}^d_\delta}(\phi_\bn(\bx+\bss)-\phi_\bn(\bx))(\phi_\bmm(\bx+\bss)-\phi_\bmm(\bx))\rho_{\delta}(|\bss|)\,\d \bss\,\d \bx
\\&= \frac{h^d}{2} \int_{\mathbb{B}^d_\delta} f_{\bk}(\bss) \, \rho_{\delta}(|\bss|) \, \mathrm{d}\bss,\;\;\;\bs k = (k_1, \ldots, k_d),\;\;\;k_j = |n_j - m_j|,
\end{split}\end{equation}
where the integrand $f_{\bk}(\bss)$ is a linear combination of B-spline functions (see Theorem \ref{Toepp3d}).
This result effectively reduces the computational complexity inherent in the Galerkin-type weak formulation to the level of strong-form discretizations, thereby greatly simplifying the overall algorithmic structure.  Moreover, the integrand $f_\bk(\bx)$ has a zero of multiplicity two at the origin (see Proposition \,\ref{Smulti}), which makes it computationally tractable and suitable for both integrable and hypersingular kernels. Similar strategy has already been applied to fractional Laplacian equations \cite{sheng2021fast}, where, in two dimensions, the fourfold integral reduces to a one-dimensional integral. 
Second, for $\delta > 0$, the resulting nonlocal FEM stiffness matrix is a block-Toeplitz matrix with sparse Toeplitz blocks, whose bandwidth increases with $\delta$. This structure enables fast matrix-vector multiplications via FFT-based algorithms.
Third, for a uniform mesh with element size $h_j\equiv h$, when $\delta \leq h$, the entries of the generating matrix associated with the hypersingular kernel~\eqref{kernel} in two and three dimensions can be computed exactly. This result reveals that, from the perspective of the stiffness matrix, the nonlocal formulation asymptotically recovers the classical stiffness matrix for Laplace operators as $\delta \to 0$. To the best of our knowledge, this is a new observation not previously reported in the literature. In the case $\delta > h$, the singularity of the hypersingular kernel near the origin can be exploited to split the integral into singular and regular parts for more efficient computation.

The rest of this paper is organized as follows. In section \ref{sec2}, we derive the explicit integral representation for the entries of the stiffness matrix for nonlocal model \eqref{1DNon} in arbitrary dimensions, and exploit the Toeplitz structure of the matrix and the two limiting cases with respect to the parameter $\delta$. The detailed implementation for calculation of the entries matrix  is introduced in section \ref{sec3}.
In section \ref{sec5}, ample numerical results for nonlocal problems with both regular and hypersingular kernels, including accuracy testing and limiting behaviors are presented to demonstrate the efficiency and accuracy. We conclude the paper with some remarks.

\section{Q1-FEM for nonlocal Laplacian on rectangular meshes}\label{sec2}
\setcounter{equation}{0}
\setcounter{lmm}{0}
\setcounter{thm}{0}

In this section, we introduce the several ingredients we use to construct the efficient finite element method, including the B-spline functions, the generalized convolution property of the B-spline functions, and $d$-dimensional spherical coordinates. The explicit integral expressions of the entries of the nonlocal FEM stiffness matrix, its block Toeplitz structure, as well as the two limiting cases of the nonlocal stiffness matrix when the interaction radius $\delta \rightarrow 0$ and $\delta \rightarrow \infty$ are discussed in this section, while the complete implementation for the computation of each element of the stiffness matrix are presented in the next sections.

Let us first introduce some notations. Let $\mathbb{R}$ (resp. $\mathbb{N}$) be the set of all real numbers (resp. positive integers), and
let $\mathbb{N}_0=\mathbb{N}\cup \{0\}$.  We use boldface lowercase letters to denote $d$-dimensional multi-indexes, vectors and multi-variables,
e.g., $ \bk=(k_1,\cdots,k_d)$  and $\bx=(x_1,\cdots,x_d)$.
Also, let $e_i=(0,\cdots,1,\cdots,0)$ be the $i$-th unit vector in $\mathbb{R}^d$, and denote 
\begin{equation}\label{notationvector}
|\bk|_1=\sum^d_{j=1}k_j,\;\;\;|\bk|_{\infty}=\max_{1\leq j\leq d}k_j.
\end{equation}

 \subsection{FEM setting}\label{femset}

Denote $\widetilde  \Omega:=\Omega\cup\Omega_\delta,$ and define the natural energy space associated with \eqref{1DNon} as
\begin{equation}\label{FESpace}
\mathcal H^1_{0,\delta}(\widetilde  \Omega) =\bigg\{u\in L^2(\widetilde  \Omega)\, :\, \int_{\widetilde\Omega}\int_{\mathbb{B}^d_\delta}(u(\bx+\bss)-u(\bx))^2\rho_{\delta}(|\bss|)\,\d \bss\,\d \bx < \infty,  \,u|_{\Omega_{\delta}}=0\bigg\},
\end{equation}
as a counterpart  of the usual $H^1_0$-space for local problems. 
A weak formulation of \eqref{1DNon} is to find $u\in \mathcal H^1_{0,\delta}(\widetilde  \Omega) $ such that 
\begin{equation}\label{WeakF}
{\mathcal A}_\delta(u,v) = (f,v)_{\Omega},\quad \forall v\in \mathcal H^1_{0,\delta}(\widetilde  \Omega),
\end{equation}
where $(\cdot,\cdot)_{\Omega}$ is the usual $L^2$-inner product on $\Omega,$ and 
\begin{equation}\label{BilinearF}
\begin{split}
 & {\mathcal A}_\delta(u,v) = \frac12 \int_{\widetilde\Omega}\int_{\mathbb{B}^d_\delta}(u(\bx+\bss)-u(\bx))(v(\bx+\bss)-v(\bx))\rho_{\delta}(|\bss|)\,\d \bss\,\d \bx.
\end{split}
\end{equation}

Let $\Omega=(a_1,b_1)\times\cdots\times(a_d,b_d)$ be a $d$-dimensional rectangular cuboid with each of the $d$ sides having a potentially different length. Accordingly,  
we set
$\widetilde \Omega=(a_1-\delta,b_1+\delta)\times\cdots\times(a_d-\delta,b_d+\delta),$ and have $\Omega_\delta=\widetilde \Omega\setminus \Omega.$
We uniformly partition the domain $\Omega$ along each coordinate direction  into
\begin{equation}\label{meshmn}
\begin{split}
\mathcal{T}_h=\big\{(\xi_{n_1},\xi_{n_2},\cdots,\xi_{n_d})\,:\, 0\leq n_j\leq N_j+1\;\text{with}\;\;1\leq j\leq d\big\},\;\;
\end{split}\end{equation}
where $\xi_{n_j}:=\xi_{j,n_j}=a_j+ n_j h_j$ with $h_j=\frac{b_j-a_j}{N_j+1}$. We introduce the usual 
piecewise linear FEM basis functions:   
\begin{equation}
\label{0piecewiselinearbasis}
\phi_{n_j}(x_j)=\begin{dcases}
\frac{x_j-\xi_{n_{j}-1}}{h_j},\quad & {\rm if}\;\; x_j\in (\xi_{n_{j}-1},\xi_{n_j}),\\[3pt]
\frac{\xi_{n_{j}+1}-x_j}{h_j}, \quad & {\rm if}\;\; x_j\in (\xi_{n_j},\xi_{n_j+1}),\\[3pt]
0,\quad & \text{elsewhere on}\;\; [a_j-\delta, b_j+\delta],
\end{dcases}
\end{equation}
and define the $d$-dimensional approximation space 
\begin{equation}
\label{appspacerd}
{\mathcal V}^0_{h,\delta}:={\rm span}\Big\{\phi_{\bn}(\bx)=\prod^d_{j=1}\phi_{n_j}(x_j)\,:\, \bn\in\Upsilon_{N}\Big\}, 
\end{equation}
where the index set
\begin{equation}\label{UpsilonA}
\Upsilon_{N}:=\big\{\bn=(n_1,\cdots, n_d)\,:\, 1\le n_j\le N_j,\; 1\le j\le d\big\}.
\end{equation}  
The FEM for \eqref{WeakF} is to find  
$u_{h}\in \mathcal V^{0}_{h,\delta}$ such that 
\begin{equation}\label{uvshx}\begin{split}
{\mathcal A}_\delta(u_h,v_h)
=(f,v_h)_{\Omega},\quad \forall\, v_h\in \mathcal V^{0}_{h,\delta}.
\end{split}
\end{equation}
 It is known that both \eqref{WeakF} and \eqref{uvshx} are well-posed by the standard Lax-Milgram Lemma.

  Given this setting, we intend to explore the best possible analytic information for accurately  computing
 the entries of the stiffness matrix.  
For this purpose, we write
\begin{equation}\label{uexpansion}
u_h(\bx)=\sum_{\bn\in \Upsilon_{\!N}} \tilde{u}_{\bn}\phi_{\bn}(\bx)\in {\mathcal V}^0_{h,\delta},\quad \forall\, \bx\in \widetilde \Omega.
\end{equation}
Substituting  \eqref{uexpansion} into \eqref{uvshx}, we can obtain the following linear system
\begin{equation}\label{dDsystem}
\bs S_\delta\, \tilde {\bs u}={\bs f},
\end{equation}
where we arrange the 
$N_\ast=\prod^d_{j=1}N_j$ unknowns in column-major order, i.e., $\bs u$ is a column vector in $\mathbb R^{N_*},$ and can form the stiffness matrix $\bs S_\delta$ and the vector $\bs f$ accordingly. For instance, in 2D case, we arrange the unknowns in column-major order, that is,
\begin{equation*}\label{ufform}
\begin{split}
&\tilde{\bs u}=\big(\underbrace{\tilde{u}_{11},\tilde{u}_{21},\cdots,\tilde{u}_{N_11}}_{n_2=1}, \, \underbrace{\tilde{u}_{12},\tilde{u}_{22},\cdots,\tilde{u}_{N_12}}_{n_2=2},\cdots,
\underbrace{\tilde{u}_{1N_2},\tilde{u}_{2N_2},\cdots,\tilde{u}_{N_1N_2}}_{n_2=N_2}\big)^{\top}\in {\mathbb R}^{N_*}, 
\end{split}
\end{equation*}
where $\tilde u_{n_1n_2}$ is the $l$-th element of $\tilde{\bs u}$ with $l=(n_2-1)N_1+n_1.$ Correspondingly,  the nonlocal stiffness matrix $\bs S_\delta$ is an $N_1N_2$-by-$N_1N_2$ symmetric matrix with  the entries
\begin{equation}\label{2DSdelta}
(\bs S_\delta)_{ll'}={\mathcal A}_\delta(\phi_{n_1}\phi_{n_2},\phi_{m_1}\phi_{m_2}),\quad l=(n_2-1)N_1+n_1, \;  l'=(m_2-1)N_1+m_1,
\end{equation}
and 
\begin{equation*} 
\bs {f}=(f_1,\cdots,f_{N_\ast})^\top,\quad f_{l^\prime}=(f,\phi_{m_1}\phi_{m_2}),\quad l'=(m_2-1)N_1+m_1.
\end{equation*}
Similarly, in the 3D case, we have
\begin{equation*}
\tilde {\bs u}=\big(\underbrace{\tilde{u}_{111},\tilde{u}_{211},\cdots,\tilde{u}_{N_1 1 1}}_{n_2=n_3=1}, \, \underbrace{\tilde{u}_{112},\tilde{u}_{212},\cdots,\tilde{u}_{N_1 1 2}}_{n_2=1,n_3=2},\cdots,
\underbrace{\tilde{u}_{1 N_2 N_3},\tilde{u}_{2 N_2 N_3},\cdots,\tilde{u}_{N_1 N_2 N_3}}_{n_2=N_2,n_3=N_3}\big)^{\top} \in \mathbb{R}^{N_*},  
\end{equation*}
and the coefficient $\tilde u_{n_1n_2n_3}$ in \eqref{uexpansion} is located at the position of  $l=(n_3-1)N_1N_2+(n_2-1)N_1+n_1$ in the unknown vector $\tilde {\bs u},$
and the 
corresponding nonlocal stiffness matrix $\bs S_\delta\in {\mathbb R^{N_*\times N_*}}$ is a symmetric matrix with the entries
\begin{equation}\label{3DSdelta}
(\bs S_\delta)_{ll'}={\mathcal A}_\delta(\phi_{n_1}\phi_{n_2}\phi_{n_3},\phi_{m_1}\phi_{m_2}\phi_{m_3}),\quad l'=(m_3-1)N_1N_2+(m_2-1)N_1+m_1,
\end{equation}
where the global index $l$ is the same as above. Similarly,
\begin{equation*} 
\bs {f}=(f_1,\cdots,f_{N_\ast})^\top,\quad f_{l^\prime}=(f,\phi_{m_1}\phi_{m_2}\phi_{m_3}).
\end{equation*}
We can form the system in 
\eqref{dDsystem} in the same manner in any dimension.

\subsection{B-splines and  some useful properties}
In the computation of the nonlocal stiffness matrix, we mainly use the relation between finite element bases and  B-splines,  where  the convolution properties of the B-splines are essential for the derivation. 

It is known that the cardinal B-splines can be defined recursively by convolution (see e.g., \cite{Chui1992introduction,Milovanovi2010}).

\begin{defn}[{\bf Cardinal B-splines}]\label{Bspline}{\em 
The cardinal B-spline of degree $p\in \mathbb{N}$ is defined via convolution
\begin{equation}\label{bp}
B_{p}(t)=\big(B_{p-1}\ast B_0\big)(t)=\int_{\RR}B_{p-1}(t-s)B_0(s)\,\d s=\int_{0}^1B_{p-1}(t-s)\,\d s,\quad p\ge 1,
\end{equation}
where the cardinal B-spline of degree $0$ is the  characteristic function of the interval $[0,1),$ i.e., 
$B_0(t)=1$ for $t\in [0,1)$ and vanishes elsewhere on $\mathbb R.$}
\end{defn}

We have the following important properties.
\begin{lemma}\label{propertyB}
The cardinal B-spline $B_p(t)$ of degree $p\in \mathbb{N}$ has the properties as follows.
\smallskip
\begin{itemize}
    \item[(i)]  ${\rm sup}(B_{p})\subseteq I_p:=[0,p+1]$, and its a piecewise polynomial of degree $p-1$ on each subinterval $[k,k+1]$ with $k=0,\cdots p.$ Globally, $B_p\in C^{p-1}(\mathbb R).$
    \medskip
    \item [(ii)] For  $t\in I_p$ and $p\ge 2,$
    \begin{equation}\label{relatAB-S} 
\begin{aligned}
B_p(t) =\frac{t}{p-1} B_{p-1}(t)+\frac{p-t}{p-1} B_{p-1}(t-1);  \quad 
B_p^{\prime}(t)=B_{p-1}(t)-B_{p-1}(t-1).
\end{aligned}
\end{equation}
\item[(iii)] It is symmetric on the interval $I_p,$ i.e., 
\begin{equation}\label{symeqA} 
B_{p}\Big(\frac{p+1}2+t\Big)=B_{p}\Big(\frac{p+1}2-t\Big)\quad {\rm or}\quad B_{p}(t)=B_{p}(p+1-t).
\end{equation}
\item[(iv)] For $p,q\ge 0,$
\begin{equation}\label{innereq}\begin{split}
\int_{\RR}\!B_{p}(t+s)B_{q}(s)\,\d s=B_{p+q+1}(p+1-t)=B_{p+q+1}(q+1+t). 
\end{split}\end{equation}
\end{itemize}
\end{lemma}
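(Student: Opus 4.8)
The plan is to run everything off the convolution definition \eqref{bp}, from which a one-line induction gives the structural fact that $B_p$ is the $(p+1)$-fold autoconvolution of the box $B_0$, i.e. $B_p=B_0^{\ast(p+1)}:=B_0\ast\cdots\ast B_0$ ($p+1$ factors). Together with the commutativity and associativity of convolution this single observation is the engine behind all four items; in particular it yields at once the semigroup identity
\begin{equation*}
B_p\ast B_q = B_0^{\ast(p+1)}\ast B_0^{\ast(q+1)} = B_0^{\ast(p+q+2)} = B_{p+q+1},
\end{equation*}
which is the heart of (iv).

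For (i) I would induct on $p$: the base case $p=0$ is immediate, and for the step, writing $B_p(t)=\int_0^1 B_{p-1}(t-s)\,\d s$ shows $\mathrm{supp}(B_p)\subseteq \mathrm{supp}(B_{p-1})+[0,1]\subseteq[0,p+1]$, while the facts that integrating a piecewise polynomial raises its degree by one and keeps the breakpoints at the integers, and that antidifferentiation smooths by one order (upgrading $C^{p-2}$ to $C^{p-1}$), give the stated local polynomial character and global regularity. For the derivative relation in \eqref{relatAB-S} I would rewrite \eqref{bp} via the substitution $u=t-s$ as the antiderivative form $B_p(t)=\int_{t-1}^{t}B_{p-1}(u)\,\d u$ and differentiate using the fundamental theorem of calculus, which gives $B_p'(t)=B_{p-1}(t)-B_{p-1}(t-1)$ directly; the accompanying three-term (Cox--de Boor-type) recurrence I would prove by induction on $p$, differentiating both sides, reconciling the result with the derivative relation just established, and fixing the additive constant by compact support. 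For the symmetry \eqref{symeqA}, I would again induct: $B_0$ is symmetric about $t=1/2$, and convolving with one further box shifts the centre of symmetry by $1/2$, so $B_p$ is symmetric about $(p+1)/2$, which is exactly \eqref{symeqA}.

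The main point is (iv), and I expect it to be the only step requiring genuine care. The integral on the left of \eqref{innereq} is a cross-correlation rather than a convolution, so I would first use the symmetry \eqref{symeqA} in the form $B_p(t+s)=B_p\big((p+1-t)-s\big)$ to convert it:
\begin{equation*}
\int_{\RR}B_p(t+s)B_q(s)\,\d s=\int_{\RR}B_p\big((p+1-t)-s\big)B_q(s)\,\d s=(B_p\ast B_q)(p+1-t),
\end{equation*}
and then apply the semigroup identity above to obtain $B_{p+q+1}(p+1-t)$. The second equality in \eqref{innereq} follows from one more use of symmetry, now for $B_{p+q+1}$: reflecting the argument $p+1-t$ about $(p+q+2)/2$ produces $q+1+t$. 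The delicate points --- and the place I would be most careful --- are the support bookkeeping when promoting \eqref{bp} to the autoconvolution statement (so that the semigroup identity holds as an equality of functions, not merely on the interior of the support) and the correct tracking of reflection centres in (iii)--(iv); once $B_p=B_0^{\ast(p+1)}$ and the symmetry are in hand, everything else is routine.
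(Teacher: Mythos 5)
Your proof is correct, but it takes a more elementary route than the paper on the one item the paper actually proves. The paper cites the literature for (i)--(iii) and only sketches (iv), establishing the semigroup identity $B_p\ast B_q=B_{p+q+1}$ through the Fourier transform ($\widehat B_p=(\widehat B_0)^{p+1}$, hence $\widehat B_{p+q+1}=\widehat B_p\widehat B_q$, then Fourier inversion), after which it applies the symmetry \eqref{symeqA} exactly as you do to convert the correlation $\int_{\RR}B_p(t+s)B_q(s)\,\d s$ into an evaluation of the convolution. You instead obtain the semigroup identity purely algebraically, from $B_p=B_0^{\ast(p+1)}$ (a one-line induction on \eqref{bp}) together with associativity and commutativity of convolution, so no Fourier inversion theorem is needed; and you supply self-contained induction arguments for (i)--(iii) (support and smoothness bookkeeping, the antiderivative form $B_p(t)=\int_{t-1}^t B_{p-1}(u)\,\d u$ plus the fundamental theorem of calculus for the derivative identity in \eqref{relatAB-S}, and the fact that convolving two functions symmetric about $a$ and $b$ gives a function symmetric about $a+b$) where the paper defers to references. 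What the paper's route buys is brevity given standard Fourier facts; what yours buys is a completely elementary, self-contained argument, and your handling of the final reflection (sending $p+1-t$ to $(p+q+2)-(p+1-t)=q+1+t$ via symmetry of $B_{p+q+1}$) matches the paper's. One small remark: your induction in (i) correctly shows that $B_p$ is a piecewise polynomial of degree $p$ (each convolution with $B_0$ raises the degree by one, starting from degree $0$); the lemma's phrase ``degree $p-1$'' is a slip in the statement, so do not contort your argument to match it.
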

\begin{proof}
The proofs of the first three properties can be found in \cite{Chui1992introduction,Milovanovi2010}. Here, we sketch the proof of the convolution formula (iv), which will be mainly used for the derivation of the nonlocal stiffness matrix.  Let $\widehat B_p(\omega)$ be the Fourier transform of $B_p(t).$ Then by the definition \eqref{bp}, we have 
$\widehat B_p(\omega)=\big(\widehat B_0(\omega)\big)^{p+1}$ and  $\widehat B_{p+q+1}(\omega)= \widehat B_{p}(\omega)\cdot \widehat B_{q}(\omega)$
for $p,q\ge 0.$ Thus, applying Fourier inverse transform, we have   
\begin{equation*}\label{innereqold}\begin{split}
B_{p+q+1}(t)=(B_p*B_{q})(t)=\!\int_{\RR}B_{p}(t-s)B_{q}(s)\,\d s
=\int_{\RR}B_{p}(p+1-t+s)B_{q}(s)\,\d s,
\end{split}\end{equation*}
where in the last step, we used \eqref{symeqA} for $B_p.$ Substituting $t$ by  $p+1-t,$ we obtain the first identity of \eqref{innereq}, and the second identity is a direct consequence of     \eqref{symeqA}.
\end{proof}

 It is known that the finite element basis in \eqref{0piecewiselinearbasis} 
 can be expressed in $B_1,$ i.e.,
\begin{equation}\label{p1basisrelation}
\tilde{\phi}_{n_j}(x_j)=B_1\Big(\frac{x_j-\xi_{n_{j}-1}}{h_j}\Big), \quad B_1(t)=\begin{dcases}
t,\;\;&t\in[0,1),\\
2-t,\;\;&t\in[1,2),\\
0,\;\;&\text{elsewhere,}
\end{dcases}
\end{equation}
where $\tilde{\phi}_{n_j}$ is the zero extension of ${\phi}_{n_j}$  to $\mathbb R.$  In the late part,  we also need to use the cubic spline given by 
\begin{equation}\label{bspline4}
B_3(t)=\begin{dcases}
\frac{t^3}{6},\;\;&t\in[0,1),\\
-\frac{t^3}2+2t^2-2t+\frac23,\;\;&t\in[1,2),\\
\frac{t^3}2-4t^2+10t-\frac{22}3,\;\;&t\in[2,3),\\
-\frac{(t-4)^3}{6},\;\;&t\in[3,4),\\
0,\;\;&\text{elsewhere.}
\end{dcases}
\end{equation}
  
\subsection{Nonlocal FEM stiffness matrix} In what follow, we restrict our attention to high dimensional cube with $h_1=\cdots =h_d=h$.  
We have the following integral representations of the entries of the nonlocal stiffness matrix.
\begin{thm}\label{Toepp3d}   Let $\{\phi_{\bs n}(\bs x)\}$ be the tensorial FEM basis defined in 
\eqref{0piecewiselinearbasis}-\eqref{appspacerd}, and $B_3(t)$ be the cubic splines given in \eqref{bspline4}. Then 
the entries of the  nonlocal stiffness matrix $\bs S_\delta \in {\mathbb R}^{N_*\times N_*}$ with $N_*=\prod_{j=1}^dN_j$,
are given by
\begin{equation}\label{tkjdd}
 (\bs S_\delta)_{ll^\prime}
={\mathcal A}_\delta(\phi_\bn,\phi_\bmm)=\frac{h^d}{2}\int_{\mathbb{B}^d_\delta} f_{\bk}(\bss) \,\rho_{\delta}(|\bss|)\,\d \bss, \end{equation} 
where 
\begin{equation}\label{cgdefqdd}  \begin{split}
f_{\bk}(\bss)=2\prod^d_{j=1}B_3(k_j+2)- \prod^d_{j=1}B_3(k_j+2-s_j/h)-\prod^d_{j=1}B_3(k_j+2+s_j/h),
\end{split}
\end{equation}
with  $\bs k=(k_1,\cdots, k_d), k_j=|n_j-m_j|,$ and the local-to-global indices: 
\begin{equation}\label{indexrelad}\begin{split}
l= \sum_{i=1}^{d} (n_i - 1) \Big(\prod_{j=1}^{i-1} N_j\Big) + 1,
\quad  l^\prime= \sum_{i=1}^{d} (m_i - 1) \Big(\prod_{j=1}^{i-1} N_j\Big) + 1.
\end{split}\end{equation}
\end{thm}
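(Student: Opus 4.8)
The plan is to start from the definition \eqref{BilinearF} with $u=\phi_\bn$ and $v=\phi_\bmm$, expand the product $(\phi_\bn(\bx+\bss)-\phi_\bn(\bx))(\phi_\bmm(\bx+\bss)-\phi_\bmm(\bx))$ into its four constituent terms, and exchange the order of integration so that the inner integral is taken over $\bx$. Extending each basis function by zero to $\RR^d$, the $\bx$-integral over $\widetilde\Omega$ may be replaced by one over $\RR^d$, since for interior indices $\bn,\bmm$ every relevant support, shifted by $\bss\in\mathbb B^d_\delta$, stays inside $\widetilde\Omega$. Because $\phi_\bn(\bx)=\prod_{j}\phi_{n_j}(x_j)$ is a tensor product and $\d\bx=\prod_j\d x_j$, each of the four $\bx$-integrals factorizes into a product of one-dimensional cross-correlations such as $\int_\RR \phi_{n_j}(x_j+s_j)\phi_{m_j}(x_j)\,\d x_j$, together with the two $\bss$-independent ``mass'' integrals $\int_\RR\phi_{n_j}\phi_{m_j}\,\d x_j$.

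Next I would evaluate these one-dimensional integrals. Using the identification \eqref{p1basisrelation}, namely $\phi_{n_j}(x_j)=B_1\big((x_j-\xi_{n_j-1})/h\big)$, and the change of variables $t=(x_j-\xi_{m_j-1})/h$ together with $\xi_{m_j-1}-\xi_{n_j-1}=(m_j-n_j)h$, each cross-correlation reduces to $h\int_\RR B_1(t+\sigma)B_1(t)\,\d t$ for an appropriate shift $\sigma$. The convolution identity Lemma \ref{propertyB}(iv) with $p=q=1$ then turns this into $h\,B_3(2+\sigma)$. Writing $\nu_j:=n_j-m_j$, the two mass-type terms each contribute $h^d\prod_j B_3(2+\nu_j)$, while the two cross terms contribute $h^d\prod_j B_3(2-\nu_j+s_j/h)$ and $h^d\prod_j B_3(2+\nu_j+s_j/h)$, respectively.

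The crux of the argument, and the step I expect to be the main obstacle, is to replace the signed differences $\nu_j$ by their absolute values $k_j=|\nu_j|$ so as to reach the integrand \eqref{cgdefqdd}. This cannot be done termwise: the two cross-term products above are \emph{not} pointwise equal to $\prod_j B_3(k_j+2\pm s_j/h)$ for general $\bss$. Instead I would exploit that both the ball $\mathbb B^d_\delta$ and the radial weight $\rho_\delta(|\bss|)$ are invariant under each coordinate reflection $s_j\mapsto -s_j$. Composing these reflections with the symmetry $B_3(t)=B_3(4-t)$ from \eqref{symeqA} shows that, under the $\bss$-integral against $\rho_\delta$, the sign of each $\nu_j$ inside $\prod_j B_3(2\pm\nu_j+s_j/h)$ may be flipped independently. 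Choosing the reflection pattern $\sigma_j=-\sgn(\nu_j)$ converts $\prod_j B_3(2-\nu_j+s_j/h)$ into $\prod_j B_3(2+k_j+s_j/h)$, and $\prod_j B_3(2+\nu_j+s_j/h)$ into $\prod_j B_3(2-k_j+s_j/h)$; a final per-factor use of \eqref{symeqA} rewrites the latter as $\prod_j B_3(k_j+2-s_j/h)$. The mass terms likewise reduce to $\prod_j B_3(k_j+2)$ since $B_3(2+\nu_j)=B_3(2+k_j)$.

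Collecting the four contributions, the bracketed $\bx$-integral, integrated against $\rho_\delta$ over $\mathbb B^d_\delta$, equals $h^d\int_{\mathbb B^d_\delta} f_\bk(\bss)\,\rho_\delta(|\bss|)\,\d\bss$ with $f_\bk$ exactly as in \eqref{cgdefqdd}; the prefactor $\tfrac12$ from \eqref{BilinearF} then yields \eqref{tkjdd}. The local-to-global index relations \eqref{indexrelad} are just the column-major ordering recorded in Section \ref{femset}, so they need no further work. Beyond the reflection argument, the only points requiring care are the justification of the interchange of integration for a possibly hypersingular kernel $\rho_\delta$ and the support bookkeeping for the extension to $\RR^d$; both are benign here because the integrand $f_\bk$ vanishes to second order at the origin (Proposition \ref{Smulti}), which tames the singularity of the kernel near $\bss=0$.
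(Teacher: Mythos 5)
Your proposal is correct and follows the same overall route as the paper's proof --- zero extension to $\RR^d$, factorization of the four terms of \eqref{BilinearF} into one-dimensional cross-correlations, and evaluation of each factor via the convolution identity of Lemma \ref{propertyB}(iv) --- but at the step you rightly single out as the crux, your argument is more careful than the paper's, and the extra care is genuinely needed. The paper passes from the signed differences $\nu_j=n_j-m_j$ to $k_j=|n_j-m_j|$ by asserting the pointwise identity $g_1-g_2(\bss)-g_3(\bss)=h^d f_{\bk}(\bss)$ (in the notation of \eqref{fg123}); as you observe, this is false when the $\nu_j$ have mixed signs. Concretely, for $d=2$, $\nu_1=-\nu_2=1$, $a=s_1/h$, $b=s_2/h$, one finds $g_2(\bss)+g_3(\bss)=h^2\big[B_3(3-a)B_3(3+b)+B_3(3+a)B_3(3-b)\big]$, while the corresponding part of $h^2 f_{1,1}(\bss)$ is $h^2\big[B_3(3-a)B_3(3-b)+B_3(3+a)B_3(3+b)\big]$; their difference, $-h^2\big[B_3(3-a)-B_3(3+a)\big]\big[B_3(3-b)-B_3(3+b)\big]$, is nonzero for all $0<s_1,s_2<h$, and it integrates to zero against $\rho_\delta(|\bss|)$ over $\mathbb{B}^2_\delta$ only because it is odd under the reflection $s_2\mapsto-s_2$. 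Your reflection argument --- flip $s_j\mapsto-s_j$ precisely for those $j$ with $\nu_j<0$ (a transformation preserving the ball and the radial kernel), then apply $B_3(t)=B_3(4-t)$ factorwise --- is exactly the mechanism that legitimizes this passage under the integral sign, so your proof supplies a justification that the paper's proof skips; the theorem's formula \eqref{tkjdd} is of course still correct, for precisely the symmetry reason you invoke. The remaining points you flag (Fubini for the hypersingular kernel, tamed by the second-order zero of $f_{\bk}$ at the origin, and the support bookkeeping that lets $\widetilde\Omega$ be replaced by $\RR^d$) are implicit in the paper's proof as well and are handled identically.
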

\begin{proof}
Let $\tilde{\phi}_{\bn}(\bx)$ be the zero extension of $\phi_{\bn}(\bx)$ on $\widetilde  \Omega=\Omega\cup\Omega_\delta$   to $\mathbb{R}^d$, and  likewise for $\tilde{\phi}_{\bs m}(\bx).$
Then we derive from \eqref{3DSdelta} that
\begin{equation}\label{BilinearF2x3d}
\begin{split}
&{\mathcal A}_\delta(\phi_{\bn},\phi_{\bmm})=\frac12\int_{\mathbb{B}^d_\delta} \int_{\widetilde\Omega}\big(\phi_\bn(\bx+\bss)-\phi_\bn(\bx)\big)\big(\phi_\bmm(\bx+\bss)-\phi_\bmm(\bx)\big) \,\d \bx\,\rho_{\delta}(|\bss|)\,\d \bss 
\\&= \frac12\int_{\mathbb{B}^d_\delta} \int_{\RR^d}\big(2\tilde{\phi}_\bn(\bx) \tilde{\phi}_\bmm(\bx)- \tilde{\phi}_\bn(\bx+\bss) \tilde{\phi}_\bmm(\bx)- \tilde{\phi}_\bn(\bx)\tilde{\phi}_\bmm(\bx+\bss)\big) \,\d \bx\,\rho_{\delta}(|\bss|)\,\d \bss  
\\&= \frac12\int_{\mathbb{B}^d_\delta} \big(g_1-g_2(\bss)-g_3(\bss)\big) \,\rho_{\delta}(|\bss|)\,\d \bss,
\end{split}
\end{equation}
where 
\begin{equation} \label{fg123}\begin{split}
&g_1=2\!\int_{\RR^d}\tilde{\phi}_{\bn}(\bx)\tilde{\phi}_{\bmm}(\bx) \,\d \bx=2\prod_{j=1}^d\int_{\RR}\tilde{\phi}_{n_j}(x_j)\tilde{\phi}_{m_j}(x_j) \,\d x_j,\\
& g_2(\bss)=\!\int_{\RR^d}\tilde{\phi}_{\bn}(\bx+\bss)\tilde{\phi}_{\bmm}(\bx) \,\d \bx=\prod_{j=1}^d\int_{\RR}\tilde{\phi}_{n_j}(x_j+s_j)\tilde{\phi}_{m_j}(x_j) \,\d x_j,\\
& g_3(\bss)=\!\int_{\RR^d}\tilde{\phi}_{\bn}(\bx)\tilde{\phi}_{\bmm}(\bx+\bss)\,\d \bx =\prod_{j=1}^d\int_{\RR}\tilde{\phi}_{n_j}(x_j)\tilde{\phi}_{m_j}(x_j+s_j)\,\d x_j.
\end{split}\end{equation}
Using the change of variable $y=\tfrac{x_j-\xi_{n_j-1}}{h}$, we find from \eqref{p1basisrelation} readily that
\begin{equation}\begin{split} \label{f1d2d_x1} 
 \int_{\RR}\tilde{\phi}_{n_j}(x_j) \tilde{\phi}_{m_j}(x_j) \,\d x_j&=\int_{\RR}B_1\Big(\frac{x_j-\xi_{n_j-1}}{h}\Big) B_1\Big(\frac{x_j-\xi_{m_j-1}}{h}\Big) \,\d x_j
\\&=h\int_{\RR}B_1(y) B_1\Big(y+\frac{\xi_{n_j-1}-\xi_{m_j-1}}{h}\Big) \,\d y
\\&=h\int_{\RR}B_1(y) B_1\big(y+n_j-m_j\big) \,\d y\\[6pt]
&=h\, B_3(n_j-m_j+2)=h\, B_3(m_j-n_j+2),
\end{split}\end{equation}
where we used the property \eqref{innereq}  including  $B_3(2+t)=B_3(2-t)$
to derive the last  two identities.  Similarly, with the same change of variable, we obtain from \eqref{innereq}-\eqref{p1basisrelation} that
 \begin{equation} \label{f2d2d_x1}\begin{split}
\int_{\RR}\tilde{\phi}_{n_j}(x_j+s_j) &\tilde{\phi}_{m_j}(x_j) \,\d x_j=\int_{\RR}B_1\Big(\frac{x_j+s_j-\xi_{n_j-1}}{h}\Big) B_1\Big(\frac{x_j-\xi_{m_j-1}}{h}\Big) \,\d x_j 
\\&= h\int_{\RR}B_1\Big(y+\frac{s_j+\xi_{m_j-1}-\xi_{n_j-1}}{h}\Big)B_1(y)  \,\d y
\\[4pt]&=h B_3\big(m_j-n_j+2+s_j/h\big)=h B_3\big(n_j-m_j+2-s_j/h\big),
\end{split}\end{equation}
and 
 \begin{equation} \label{f3d2d_x1}\begin{split}
\int_{\RR}\tilde{\phi}_{n_j}(x_j)&\tilde{\phi}_{m_j}(x_j+s_j)\,\d x_j  =\int_{\RR}\tilde{\phi}_{n_j}(x_j-s_j)\tilde{\phi}_{m_j}(x_j)\,\d x_j\\
&=hB_3\big(n_j-m_j+2+s_j/h\big)=hB_3\big(m_j-n_j+2-s_j/h\big).
\end{split}\end{equation}
Thus, we derive from \eqref{fg123}-\eqref{f3d2d_x1} that
\begin{equation*}\begin{split}
g_1-g_2(\bss)-g_3(\bss)=h^d\Big(2\prod^d_{j=1}B_3(k_j+2)- \prod^d_{j=1}B_3(k_j+2-s_j/h)-\prod^d_{j=1}B_3(k_j+2+s_j/h)\Big),
\end{split}\end{equation*}
where 
$k_j=|n_j-m_j|.$ Denoting  
$ f_{\bs k}(\bs s):=(g_1-g_2(\bs s)-g_3(\bs s))/h^d$,
we derive  \eqref{tkjdd}-\eqref{cgdefqdd} from
\eqref{BilinearF2x3d} and the above immediately.
\end{proof}

It is seen from the above theorem that (i) the computation of the entries reduces  to integrations over the ball of interaction; 
and (ii)  by  
\eqref{bspline4}, $B_3$ has a support $[0, 4),$ so the sparsity and bandwidth  of $\bs S$ depends on $\delta.$ We next illustrate  this   and provide more explicit formulas in two and three dimensions. 
\subsubsection{Two-dimensional case}
With the ordering in \eqref{2DSdelta}, we can obtain the following block Toeplitz structure of the stiffness matrix along with expressions for their entries.
\begin{cor}[{\bf 2D nonlocal stiffness matrix}]
    \label{Toepsdel} 
If $\delta>0$, the nonlocal FEM stiffness matrix $\bs S_\delta$ is an $N_2$-by-$N_2$ block-Toeplitz matrix of the form 
\begin{equation}\label{asdel_2d}
\bs S_\delta= 
\begin{bmatrix}
   \boldsymbol T_{0} &\hspace{-6pt} \boldsymbol T_{1} &   \cdots &   \boldsymbol T_{N_2-2} & \;\; \boldsymbol T_{N_2-1} \\[1pt]
   \boldsymbol T_{1}  &\hspace{-6pt} \boldsymbol T_{0}   & \ddots   &\ddots &\boldsymbol T_{N_2-2}\\[0pt]
      \vdots& \hspace{-6pt}\ddots  & \ddots   & \ddots  & \vdots \\[-1pt]
   \boldsymbol T_{N_2-2}   & \ddots    & \ddots   & \boldsymbol T_{0} &\boldsymbol T_{1} \\[5pt]
   \boldsymbol T_{N_2-1}   & \;\; \boldsymbol T_{N_2-2}     & \cdots\hspace{-2pt}     & \boldsymbol T_{1}  & \boldsymbol T_{0}
  \end{bmatrix},
  \end{equation} 
where each block of $\bs S_\delta$  is  an $N_1$-by-$N_1$ Toeplitz matrix  and  the  entries are given by
\smallskip
\begin{equation}\label{tkj1_2d}
\bs T_{k_2}=\begin{bmatrix}
   t_{0}^{k_2} &\hspace{-4pt} t_{1}^{k_2} & \cdots  & t_{N_1-2}^{k_2} & t_{N_1-1}^{k_2} \\[1pt]
   t_{1}^{k_2} &\hspace{-4pt} t_{0}^{k_2}   & \ddots & \ddots  & t_{N_1-2}^{k_2}\\[0pt]
      \vdots& \hspace{-4pt}\ddots  & \ddots    & \ddots & \vdots \\[-1pt]
   t_{N_1-2}^{k_2}  & \ddots   & \ddots   & t_{0}^{k_2} & t_{1}^{k_2} \\[5pt]
   t_{N_1-1}^{k_2}  & t_{N_1-2}^{k_2}   &\cdots \hspace{-2pt}    & t_{1}^{k_2} & t_{0}^{k_2}
  \end{bmatrix},    
\end{equation}
for $0\leq k_2\leq N_2-1.$
Here, for each $k_2,$ the entries in the first row of the generating matrix $\bs T_{k_2}$ can be evaluated in polar coordinates  by
\begin{equation}\label{gpa00_2d} 
t_{k_1}^{k_2}= h^2\int_0^{\pi} \Big(\int_{0}^{\delta} r \rho_{\delta}(r)\, \hat{f}_{k_1,k_2}(r,\theta)\, \d r \Big) \,\d\theta, \quad 0\le k_1\le N_1-1,
\end{equation}
where 
\begin{equation}\label{cgdefq2d} 
\begin{split}
\hat{f}_{k_1,k_2}(r,\theta)&= 2B_3(k_1+2)B_3(k_2+2)-B_3(k_1+2+r\cos\theta/h)B_3(k_2+2+ r\sin\theta/h)
\\[1pt] &\quad -B_3(k_1+2-r\cos\theta/h)B_3(k_2+2- r\sin\theta/h). 
\end{split}
\end{equation}
\end{cor}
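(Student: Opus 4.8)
The plan is to obtain this corollary as the $d=2$ specialization of Theorem \ref{Toepp3d}, reading off the two-level Toeplitz structure from the way the integrand depends on the index differences, and then rewriting the ball integral in polar coordinates. First I would set $d=2$ in \eqref{tkjdd}--\eqref{cgdefqdd}. With $\bn=(n_1,n_2)$, $\bmm=(m_1,m_2)$ and $\bss=(s_1,s_2)$, this gives
\begin{equation*}
\begin{aligned}
(\bs S_\delta)_{ll'}&=\frac{h^2}{2}\int_{\mathbb{B}^2_\delta}f_\bk(\bss)\,\rho_\delta(|\bss|)\,\d\bss,\\
f_\bk(\bss)&=2B_3(k_1+2)B_3(k_2+2)-\prod_{j=1}^{2}B_3(k_j+2-s_j/h)-\prod_{j=1}^{2}B_3(k_j+2+s_j/h),
\end{aligned}
\end{equation*}
where $k_j=|n_j-m_j|$. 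I would also note that the index map \eqref{indexrelad} collapses for $d=2$ to $l=(n_2-1)N_1+n_1$ and $l'=(m_2-1)N_1+m_1$, which is exactly the column-major ordering fixed in \eqref{2DSdelta}, so no re-indexing is needed.

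The key structural observation is that the right-hand side above depends on $(\bn,\bmm)$ only through $k_1=|n_1-m_1|$ and $k_2=|n_2-m_2|$. Grouping the global indices into blocks labeled by $(n_2,m_2)$ with within-block positions $(n_1,m_1)$, the $(n_2,m_2)$-block depends only on $|n_2-m_2|$; hence $\bs S_\delta$ is symmetric block-Toeplitz with blocks $\bs T_{k_2}$, $k_2=|n_2-m_2|$, arranged as in \eqref{asdel_2d}. Within a fixed block, the entry depends only on $|n_1-m_1|$, so each $\bs T_{k_2}$ is itself a symmetric $N_1\times N_1$ Toeplitz matrix with first-row entries $t_{k_1}^{k_2}$, $0\le k_1\le N_1-1$, exactly as displayed in \eqref{tkj1_2d}. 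This part is a direct consequence of the dependence on absolute differences and requires no computation.

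It then remains to derive the polar form \eqref{gpa00_2d}--\eqref{cgdefq2d} for $t_{k_1}^{k_2}=(\bs S_\delta)_{ll'}$. I would pass to polar coordinates $\bss=(r\cos\theta,r\sin\theta)$ on $\mathbb{B}^2_\delta$, so that $\d\bss=r\,\d r\,\d\theta$ with $r\in[0,\delta]$ and $\theta\in[0,2\pi)$, and observe that $f_\bk(r\cos\theta,r\sin\theta)$ is precisely the function $\hat f_{k_1,k_2}(r,\theta)$ defined in \eqref{cgdefq2d} (the two subtracted product terms merely appear in swapped order). This yields
\begin{equation*}
t_{k_1}^{k_2}=\frac{h^2}{2}\int_0^{2\pi}\!\!\int_0^\delta r\,\rho_\delta(r)\,\hat f_{k_1,k_2}(r,\theta)\,\d r\,\d\theta.
\end{equation*}

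The only point requiring care, and the place where the stated prefactor $h^2$ (rather than $h^2/2$) and the reduced angular range $[0,\pi]$ (rather than $[0,2\pi)$) come from, is the symmetry $\hat f_{k_1,k_2}(r,\theta+\pi)=\hat f_{k_1,k_2}(r,\theta)$. Indeed, replacing $\theta$ by $\theta+\pi$ flips the sign of both $\cos\theta$ and $\sin\theta$, which simply interchanges the two products $\prod_j B_3(k_j+2-s_j/h)$ and $\prod_j B_3(k_j+2+s_j/h)$ in \eqref{cgdefq2d}; since these enter with equal coefficient and are summed, $\hat f_{k_1,k_2}$ is $\pi$-periodic in $\theta$. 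Hence $\int_0^{2\pi}=2\int_0^\pi$, which absorbs the factor $1/2$ and delivers exactly \eqref{gpa00_2d}. I expect the bookkeeping of this angular halving to be the only genuinely nontrivial step; everything else is a transcription of the $d=2$ case of Theorem \ref{Toepp3d}.
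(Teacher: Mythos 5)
Your proof is correct and follows essentially the same route as the paper's: specialize Theorem \ref{Toepp3d} to $d=2$, read off the two-level Toeplitz structure from the fact that the entries depend only on $k_1=|n_1-m_1|$ and $k_2=|n_2-m_2|$, and pass to polar coordinates. The only (minor) difference is the justification of the angular halving: you use the $\pi$-periodicity $\hat f_{k_1,k_2}(r,\theta+\pi)=\hat f_{k_1,k_2}(r,\theta)$ coming from the antipodal swap of the two product terms, while the paper integrates over $[-\pi,\pi]$ and appeals to evenness of the integrand in $\theta$; both arguments deliver the same factor $2$ and hence the prefactor $h^2$ in \eqref{gpa00_2d}.
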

\begin{proof}
The formulas in  
\eqref{tkjdd}- \eqref{cgdefqdd} with $d=2$ reduce to
\begin{equation}\label{stiffmatr2d}
\begin{split}
(\bs S_\delta)_{ll'}={\mathcal A}_\delta(\phi_{n_1}\phi_{n_2},\phi_{m_1}\phi_{m_2}) & =\frac{h^2}{2}\int_{\mathbb{B}^2_\delta} f_{k_1,k_2}(\bss) \,\rho_{\delta}(|\bss|)\,\d \bss,
\end{split}
\end{equation}
where 
\begin{equation}\label{fin2d}  \begin{split}
f_{k_1,k_2}(\bss)=&\,2 B_3(k_1+2)B_3(k_2+2)-B_3(k_1+2-s_1/h)B_3(k_2+2-s_2/h)
\\&-B_3(k_1+2+s_1/h)B_3(k_2+2+s_2/h),
\end{split}
\end{equation}
with the local-to-global indices $l=(n_2-1)N_1+n_1$ and  $l'=(m_2-1)N_1+m_1$. 
We continue the evaluation using  polar coordinates, and using  \eqref{stiffmatr2d} and the above leads to 
\begin{equation}\label{combinationa1}
\begin{split}
(\bs S_\delta)_{ll'}={\mathcal A}_\delta(\phi_{n_1}\phi_{n_2},\phi_{m_1}\phi_{m_2})&=
\frac{h^2}2\int_0^\delta
\int_{-\pi}^{\pi}f_{k_1,k_2}(r\cos\theta,r\sin\theta)\, r \rho_{\delta}(r)\,\d \theta\,\d r
\\&=
h^2\int_0^\delta
\int_{0}^{\pi}f_{k_1,k_2}(r\cos\theta,r\sin\theta)\, r \rho_{\delta}(r)\,\d \theta\,\d r
\\&=
h^2\int_0^\delta
\int_{0}^{\pi}\hat{f}_{k_1,k_2}(r,\theta)\, r \rho_{\delta}(r)\,\d \theta\,\d r:=t^{k_2}_{k_1},
\end{split}
\end{equation}
where $\hat{f}(r,\theta)$ is defined in \eqref{cgdefq2d} and we used the fact that the integrand is even in $\theta$.

 It remains to show that the nonlocal stiffness matrix  $\bs S_\delta$ with this ordering leads to the block Toeplitz matrix in \eqref{asdel_2d}-\eqref{tkj1_2d}.  
To this end, we denote the entries of the $N_1N_2$-by-$N_1N_2$ stiffness matrix $\bs S_\delta$ in terms of $n_1,n_2,m_1,m_2$ by  $s^{n_1,n_2}_{m_1,m_2}.$ In view  of $s^{n_1,n_2}_{m_1,m_2}=(\bs S_\delta)_{ll'}$
and the definitions of $l,l'$,
we have 
\begin{equation*} 
\bs S_{\delta}=\begin{bmatrix}
  s_{1,1}^{1,1} & s_{1,1}^{2,1} & \cdots  & s_{1,1}^{N_1,1} & \cdots  & s_{1,1}^{1,N_2} &s_{1,1}^{2,N_2} & \cdots & s_{1,1}^{N_1,N_2}\\[6pt]
  s_{2,1}^{1,1} & s_{2,1}^{2,1} & \cdots  & s_{2,1}^{N_1,1} & \cdots  & s_{2,1}^{1,N_2} & s_{2,1}^{2,N_2} & \cdots & s_{2,1}^{N_1,N_2}\\[6pt]
  \vdots & \vdots & \ddots & \vdots &   & \vdots & \vdots &  \ddots & \vdots\\[6pt]
  s_{N_1,1}^{1,1} & s_{N_1,1}^{2,1} & \cdots  & s_{N_1,1}^{N_1,1} & \cdots  & s_{N_1,1}^{1,N_2} &  s_{N_1,1}^{2,N_2} & \cdots & s_{N_1,1}^{N_1,N_2}\\[6pt]
  \vdots & \vdots &   & \vdots &  \ddots & \vdots & \vdots &  & \vdots\\[6pt]
  s_{1,N_2}^{1,1} & s_{1,N_2}^{2,1} & \cdots  & s_{1,N_2}^{N_1,1} & \cdots  & s_{1,N_2}^{1,N_2} & s_{1,N_2}^{2,N_2} & \cdots & s_{1,N_2}^{N_1,N_2}\\[6pt]
  s_{2,N_2}^{1,1} & s_{2,N_2}^{2,1} & \cdots  & s_{2,N_2}^{N_1,1} & \cdots  & s_{2,N_2}^{1,N_2} & s_{2,N_2}^{2,N_2} & \cdots & s_{2,N_2}^{N_1,N_2}\\[6pt]
   \vdots & \vdots &   & \vdots &   & \vdots & \vdots &\ddots  & \vdots\\[6pt]
   s_{N_1,N_2}^{1,1} & s_{N_1,N_2}^{2,1} & \cdots  & s_{N_1,N_2}^{N_1,1} & \cdots  & s_{N_1,N_2}^{1,N_2} & s_{N_1,N_2}^{2,N_2}& \cdots & s_{N_1,N_2}^{N_1,N_2} 
  \end{bmatrix}.  
\end{equation*}
Then by \eqref{combinationa1}, we have $$s^{n_1,n_2}_{m_1,m_2}=(\bs S_\delta)_{ll'}=t_{k_1}^{k_2}=t_{|n_1-m_1|}^{|n_2-m_2|},$$ which implies  $\bs S_\delta $ is identical to the block Toeplitz matrix in \eqref{asdel_2d}-\eqref{tkj1_2d}. This completes  the proof.
\end{proof}

In view of  the above block Toeplitz structure, the matrix-vector multiplication can be performed via FFT in $O(N_1N_2\log N_1N_2)$ operations.  Next, we present several important properties of the 2D stiffness matrix.
\medskip

\noindent {\bf (i)\, Sparsity of $\bs S_{\delta}.$}\, Since ${\rm support}(B_3)\subseteq [0,4]$,  one verifies readily that 
if $(\max\{j,k\}+2)-\delta/h\ge 4,$ then $B_3(j+2)B_3(k+2)=0,$ and
\begin{equation}\label{sparseprop}
B_3(j+2\pm r\cos\theta/h)B_3(k+2\pm r\sin\theta/h)=0,\;\;\;\forall(r,\theta)\in(0,\delta)\times (0,\pi).
\end{equation}  Thus, for given $0\le k_2\le N_2-1,$ the $N_1$ entries $\{t_{k_1}^{k_2}\}_{k_1=0}^{N_1-1}$ that generate the Teoplitz matrix  $\bs T_{k_2}$ are given by
\begin{equation}\label{gpa00_2dxx} t_{k_1}^{k_2}= \begin{dcases} h^2\int_0^{\pi} \Big(\int_{0}^{\delta} r \rho_{\delta}(r)\,  \hat{f}_{k_1,k_2}(r,\theta)\, \d r \Big) \,\d\theta,\quad & 0\le k_1,k_2< \delta/h+2,\\
 0,\quad & \max\{k_1,k_2\}\ge\delta/h+2, 
\end{dcases}\end{equation}
where $\{\hat{f}_{k_1,k_2}\}$ are defined in \eqref{cgdefq2d}. 
Moreover, there holds the  symmetry property: 
 \begin{equation}\label{sym2d}
 t_{k_1}^{k_2}=t_{k_2}^{k_1},\quad \forall\,k_1,k_2\ge 0.
 \end{equation}
 Consequently,  the Toeplitz matrix $\bs T_k$ is sparse with a  band-width that increases as the interaction radius  $\delta$ increases.

\medskip

\noindent {\bf (ii)\, Limit of $\bs S_{\delta}$ as $\delta\to 0^+.$}\,  
With the above formulas at our disposal, we consider the limiting case $\delta\to 0,$ and show that with the choice of corresponding kernel, we are able to derive the siffness matrix of the usual  Laplacian operator. 
More precisely, choose
\begin{equation}\label{fracker0_2d}
   \rho_{\delta}(|\bss|) = \frac{C_{\delta}^\alpha} 
   {|\bss|^{2+\alpha}},\;\;\;\; \alpha\in [-1,2),
\end{equation}
where we take $C_{\delta}^\alpha ={2(2-\alpha)\delta^{\alpha-2}}/\pi$ 
so that  $\frac{1}{2}\int_{\mathbb{B}^2_\delta} |\bss|^2 \rho_{\delta}(|\bss|) {\d} \bss=1.$
For $0\le \delta\le h,$  we can evaluate \eqref{gpa00_2dxx}  directly and find the non-zero entries as flollows
\begin{eqnarray*} 
&& t_0^0=\frac{\alpha-2}2\Big(\frac{-1}{3\pi(\alpha-6)}\delta^4+\frac{32}{15\pi(\alpha-5)}\delta^3-\frac{1}{\alpha-4}\delta^2-\frac{64}{9\pi(\alpha-3)}\delta\Big)+\frac{8}{3},
\\&&t_1^0= t_0^1=\frac{\alpha-2}2\Big(\frac{2}{9\pi(\alpha-6)}\delta^4-\frac{56}{45\pi(\alpha-5)}\delta^3+\frac{1}{2(\alpha-4)}\delta^2+\frac{40}{27\pi(\alpha-3)}\delta\Big)-\frac{1}{3},
\\&& t_1^1=\frac{\alpha-2}2\Big(\frac{-4}{27\pi(\alpha-6)}\delta^4+\frac{32}{45\pi(\alpha-5)}\delta^3-\frac{1}{4(\alpha-4)}\delta^2+\frac{32}{27\pi(\alpha-3)}\delta\Big)-\frac{1}{3},
\\&& t_2^0=t_0^2=\frac{\alpha-2}2\Big(\frac{-1}{18\pi(\alpha-6)}\delta^4+\frac{8}{45\pi(\alpha-5)}\delta^3-\frac{16}{27\pi(\alpha-3)}\delta\Big),
\\&& t_2^1=t_1^2=\frac{\alpha-2}2\Big(\frac{1}{27\pi(\alpha-6)}\delta^4-\frac{4}{45\pi(\alpha-5)}\delta^3-\frac{4}{27\pi(\alpha-3)}\delta\Big),
\\&& t_2^2=\frac{\alpha-2}2\Big(\frac{1}{108\pi(\alpha-6)}\delta^4\Big),
\end{eqnarray*}
which implies 
\begin{equation}\label{a10_2d}
\lim\limits_{\delta\to 0^+}\bs{S}_{\delta}=\bs S_0:= \bs S_{h}\otimes\bs M_{h}+\bs M_{h} \otimes \bs S_{h},
  \end{equation}
  where
$\bs S_{h}=\frac 1 {h}{\rm diag}(-1,2,-1)$ and $\bs M_{h}=\frac{h}6 {\rm diag}(1,4,1).$ It is identical to the usual FEM stiffness matrix.

\subsubsection{Three-dimensional case} 
In 3D case, with the ordering in \eqref{3DSdelta}, we can obtain the following block Toeplitz structure of the stiffness matrix along with expressions for their entries.
\begin{cor}[{\bf 3D nonlocal stiffness matrix}]
If $\delta>0$, then the nonlocal FEM stiffness matrix $\bs S_\delta$ has the following Toeplitz-structure:
\begin{equation*} 
\bs S_\delta=  \begin{bmatrix}
  \boldsymbol T_{x_1,x_2,0} &\hspace{-6pt} \boldsymbol T_{x_1,x_2,1} &   \cdots &   \boldsymbol T_{x_1,x_2,N_3-2} & \;\; \boldsymbol T_{x_1,x_2,N_3-1} \\[1pt]
  \boldsymbol T_{x_1,x_2,1} &\hspace{-6pt} \boldsymbol T_{x_1,x_2,0}   & \ddots   &\ddots &\boldsymbol T_{x_1,x_2,N_3-2}\\[0pt]
     \vdots& \hspace{-6pt}\ddots  & \ddots   & \ddots  & \vdots \\[-1pt]
  \boldsymbol T_{x_1,x_2,N_3-2}  & \ddots    & \ddots   & \boldsymbol T_{x_1,x_2,2} &\boldsymbol T_{x_1,x_2,1} \\[5pt]
  \boldsymbol T_{x_1,x_2,N_3-1}  & \;\; \boldsymbol T_{x_1,x_2,N_3-2}    & \cdots\hspace{-2pt}     & \boldsymbol T_{x_1,x_2,1} & \boldsymbol T_{x_1,x_2,0}
 \end{bmatrix},
  \end{equation*} 
where for $k_3=0,1,\cdots,N_3-1$,  the block matrix is given by
\begin{equation*} 
\bs T_{x_1,x_2,k_3}=  \begin{bmatrix}
  \boldsymbol T_{x_1,0,k_3} &\hspace{-6pt} \boldsymbol T_{x_1,1,k_3} &   \cdots &   \boldsymbol T_{x_1,N_2-2,k_3} & \;\; \boldsymbol T_{x_1,N_2-1,k_3} \\[1pt]
  \boldsymbol T_{x_1,1,k_3} &\hspace{-6pt} \boldsymbol T_{x_1,0,k_3}   & \ddots   &\ddots &\boldsymbol T_{x_1,N_2-2,k_3}\\[0pt]
     \vdots& \hspace{-6pt}\ddots  & \ddots   & \ddots  & \vdots \\[-1pt]
  \boldsymbol T_{x_1,N_2-2,k_3}  & \ddots    & \ddots   & \boldsymbol T_{x_1,0,k_3} &\boldsymbol T_{x_1,1,k_3} \\[5pt]
  \boldsymbol T_{x_1,N_2-1,k_3}  & \;\; \boldsymbol T_{x_1,N_2-2,k_3}    & \cdots\hspace{-2pt}     & \boldsymbol T_{x_1,1,k_3} & \boldsymbol T_{x_1,0,k_3}
 \end{bmatrix},
  \end{equation*} 
with each block of $\bs T_{x_1,k_2,k_3}$ is an $N_1$-by-$N_1$ symmetric Toeplitz matrix and their entries are
\begin{equation*} 
\bs T_{x_1,k_2,k_3}=  \begin{bmatrix}
  t_{0,k_2,k_3} &\hspace{-6pt} t_{1,k_2,k_3} &   \cdots &  t_{N_1-2,k_2,k_3} & \;\;t_{N_1-1,k_2,k_3} \\[1pt]
  t_{1,k_2,k_3} &\hspace{-6pt} t_{0,k_2,k_3}   & \ddots   &\ddots &t_{N_1-2,k_2,k_3}\\[0pt]
     \vdots& \hspace{-6pt}\ddots  & \ddots   & \ddots  & \vdots \\[-1pt]
  t_{N_1-2,k_2,k_3}  & \ddots    & \ddots   & t_{0,k_2,k_3} &t_{1,k_2,k_3} \\[5pt]
  t_{N_1-1,k_2,k_3}  & \;\; t_{N_1-2,k_2,k_3}    & \cdots\hspace{-2pt}     &t_{1,k_2,k_3} & t_{0,k_2,k_3}
 \end{bmatrix},
  \end{equation*} 
   for $k_2=0,1,\cdots,N_2-1$ and $k_3=0,1,\cdots,N_3-1$. The entries of generating matrix $\bs{G}=(t_{k_1,k_2,k_3})$ can be evaluated in spherical coordinates by
\begin{equation}\label{gpa3d} 
 t_{k_1,k_2,k_3}=  \frac{h^3}2\int_{0}^{\delta}\int_{0}^\pi\int_0^{2\pi}\hat{f}_{k_1,k_2,k_3}(r,\theta,\varphi)\sin(\theta)\,\d\varphi\d\theta \, \rho_{\delta}(r)r^2\, \d r,
 \end{equation}
 where 
\begin{equation}\label{hatfin3d}  \begin{split}
&\hat{f}_{k_1,k_2,k_3}(r,\theta,\varphi)=\,2 B_3(k_1+2)B_3(k_2+2)B_3(k_3+2)\\&-B_3(k_1+2-r\sin\theta\cos\varphi/h)B_3(k_2+2-r\sin\theta\sin\varphi/h)B_3(k_3+2-r\cos\theta/h)
\\&-B_3(k_1+2+r\sin\theta\cos\varphi/h)B_3(k_2+2+r\sin\theta\sin\varphi/h)B_3(k_3+2+r\cos\theta/h).
\end{split}
\end{equation}
\end{cor}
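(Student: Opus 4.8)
The plan is to specialize Theorem~\ref{Toepp3d} to $d=3$ and then replay, level by level, the bookkeeping used for the planar case in Corollary~\ref{Toepsdel}. First I would invoke \eqref{tkjdd}--\eqref{cgdefqdd} with $d=3$, which yields
\[
(\bs S_\delta)_{ll'}=\frac{h^3}{2}\int_{\mathbb{B}^3_\delta} f_{\bk}(\bss)\,\rho_{\delta}(|\bss|)\,\d\bss,\qquad \bk=(k_1,k_2,k_3),\ k_j=|n_j-m_j|,
\]
with $f_{\bk}$ the prescribed combination of three cubic-spline factors from \eqref{cgdefqdd}. The crucial observation, exactly as in two dimensions, is that this entry depends on the pair of multi-indices $(\bn,\bmm)$ only through the vector $\bk$ of componentwise absolute differences.

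Next I would pass to spherical coordinates $s_1=r\sin\theta\cos\varphi$, $s_2=r\sin\theta\sin\varphi$, $s_3=r\cos\theta$ with $(r,\theta,\varphi)\in(0,\delta)\times(0,\pi)\times(0,2\pi)$ and volume element $r^2\sin\theta\,\d r\,\d\theta\,\d\varphi$. Substituting these into $f_{\bk}(\bss)$ turns the factors $B_3(k_j+2\pm s_j/h)$ into exactly the integrand $\hat f_{k_1,k_2,k_3}(r,\theta,\varphi)$ of \eqref{hatfin3d}, and the resulting triple integral is precisely \eqref{gpa3d}. This conversion is routine once the planar polar computation in \eqref{combinationa1} is in hand.

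The remaining task is to verify the three-level nested block-Toeplitz structure under the column-major ordering \eqref{3DSdelta}, in which $l=(n_3-1)N_1N_2+(n_2-1)N_1+n_1$ and likewise for $l'$. I would argue by nesting the 2D argument. Blocking the matrix first according to the outermost pair $(n_3,m_3)$ produces an $N_3\times N_3$ array of blocks; since $(\bs S_\delta)_{ll'}=t_{k_1,k_2,k_3}$ with $k_3=|n_3-m_3|$, this outer array is Toeplitz, with block entries $\bs T_{x_1,x_2,k_3}$. Fixing $(n_3,m_3)$ and repeating the same reasoning on the middle pair $(n_2,m_2)$ gives an $N_2\times N_2$ Toeplitz array of sub-blocks $\bs T_{x_1,k_2,k_3}$ indexed by $k_2=|n_2-m_2|$; and finally, for fixed $(n_3,m_3,n_2,m_2)$ the innermost $N_1\times N_1$ block has entries depending only on $k_1=|n_1-m_1|$, hence is a symmetric Toeplitz matrix generated by $t_{k_1,k_2,k_3}$. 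Symmetry at each level is immediate from the absolute values.

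I expect the main difficulty to be organizational rather than analytic: one must track carefully the correspondence between the flat indices $l,l'$ and the three nested block positions, so that the dependence on $\bk$ translates cleanly into the nested Toeplitz pattern. Once this is established, the sparsity of $\bs S_\delta$ (from $\mathrm{support}(B_3)\subseteq[0,4]$) and the permutation symmetry $t_{k_1,k_2,k_3}=t_{\sigma(k_1),\sigma(k_2),\sigma(k_3)}$, inherited from the radial kernel and the cubic symmetry of the mesh, can be recorded in direct analogy with the planar discussion following \eqref{gpa00_2dxx}.
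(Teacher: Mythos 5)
Your proposal is correct and follows essentially the same route as the paper's proof: specialize Theorem~\ref{Toepp3d} to $d=3$, pass to spherical coordinates to obtain \eqref{gpa3d}--\eqref{hatfin3d}, and then observe that each entry depends on $(\bn,\bmm)$ only through $k_j=|n_j-m_j|$, which under the column-major ordering forces the nested block-Toeplitz structure. The paper states this last step more tersely (writing $s^{n_1,n_2,n_3}_{m_1,m_2,m_3}=t_{|n_1-m_1|,|n_2-m_2|,|n_3-m_3|}$ and concluding directly), whereas you unfold it level by level, but the argument is the same.
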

\begin{proof}
Using the general results in  Theorem \ref{Toepp3d} with $d=3$, the formulas  \eqref{tkjdd}-\eqref{cgdefqdd} reduce to
\begin{equation}\label{stiffmatr3d}
\begin{split}
(\bs S_\delta)_{ll'}={\mathcal A}_\delta(\phi_{n_1}\phi_{n_2}\phi_{n_3},\phi_{m_1}\phi_{m_2}\phi_{m_3}) & =\frac{h^3}{2}\int_{\mathbb{B}^3_\delta} f_{k_1,k_2,k_3}(\bss) \,\rho_{\delta}(|\bss|)\,\d \bss,
\end{split}
\end{equation}
where 
\begin{equation}\label{fin3d}  \begin{split}
f_{k_1,k_2,k_3}(\bss)=&\,2 B_3(k_1+2)B_3(k_2+2)B_3(k_3+2)\\&-B_3(k_1+2-s_1/h)B_3(k_2+2-s_2/h)B_3(k_3+2-s_3/h)
\\&-B_3(k_1+2+s_1/h)B_3(k_2+2+s_2/h)B_3(k_3+2+s_3/h),
\end{split}
\end{equation}
and the local-to-global indices $l=(n_3-1)N_1N_2+(n_2-1)N_1+n_1$ and  $l'=(m_3-1)N_1N_2+(m_2-1)N_1+m_1$. 
We continue the evaluation using  spherical coordinates $s_1=r\sin\theta\cos\varphi$, $s_2=r\sin\theta\sin\varphi$, and $s_3=r\cos\theta$, we obtain from  \eqref{stiffmatr3d} and the above that  
\begin{equation}\label{com3dcase}
\begin{split}
(\bs S_\delta)_{ll'}&={\mathcal A}_\delta(\phi_{n_1}\phi_{n_2}\phi_{n_3},\phi_{m_1}\phi_{m_2}\phi_{m_3})\\&=
\frac{h^3}2\int_0^\delta\int_{0}^\pi\int_0^{2\pi}f_{k_1,k_2,k_3}(r\sin\theta\cos\varphi,r\sin\theta\sin\varphi,r\cos\theta)\sin(\theta)\,\d\varphi\d\theta \rho_\delta(r)r^2\,\d r
\\&=\frac{h^3}2\int_0^\delta\int_{0}^\pi\int_0^{2\pi}\hat{f}_{k_1,k_2,k_3}(r,\theta,\varphi)\sin(\theta)\,\d\varphi\d\theta \rho_\delta(r)r^2\,\d r
\\&:=t_{k_1,k_2,k_3},
\end{split}
\end{equation}
where $\hat{f}_{k_1,k_2,k_3}(r,\theta,\varphi)$ is defined in \eqref{hatfin3d}.
 
 It remains to show that the nonlocal stiffness matrix  $\bs S_\delta$ with this ordering leads to the block Toeplitz matrix. 
To this end, we denote the entries of the $N_1N_2N_3$-by-$N_1N_2N_3$ stiffness matrix $\bs S_\delta$ in terms of $n_1,n_2,n_3,m_1,m_2,m_3$ by  $s^{n_1,n_2,n_3}_{m_1,m_2,m_3}.$ 
Then, we have $s^{n_1,n_2,n_3}_{m_1,m_2,m_3}=(\bs S_\delta)_{ll'}=t_{k_1,k_2,k_3}=t_{|n_1-m_1|,|n_2-m_2|,|n_3-m_3|},$ which implies  $\bs S_\delta $ is identical to the block Toeplitz matrix. This completes  the proof.
\end{proof}

Like the 2D case, we study  the sparsity and the limit as $\delta\to 0$.
\medskip

\noindent {\bf (i)\, Sparsity of $\bs S_{\delta}.$}\,    Similar to the 2D case, the entries of the generating matrix $(t_{k_1,k_2,k_3})$ exhibit the following sparsity pattern:
\begin{equation*}\label{gpa3d3d} 
   t_{k_1,k_2,k_3}= \begin{dcases}  \frac{h^3}{2}\int_{\mathbb{B}^3_\delta}f_{k_1,k_2,k_3}(\bss) \rho_\delta(|\bss|)\,\d \bss,\quad & 0\le k_1,k_2,k_3< \delta/h+2,\\
 0,\quad & \max\{k_1,k_2,k_3\}\ge\delta/h+2 .
\end{dcases}\end{equation*}
Moreover, the entries of generating matrix satisfy the following symmetry property  
 \begin{equation}\label{symddinfty}
t_{k_1, k_2,k_3} = t_{k_{\sigma(1)}, k_{\sigma(2)}, k_{\sigma(3)}},\;\;\;\forall 0\leq k_j\leq N_j-1,\;1\leq j\leq 3,
 \end{equation} 
  where $\sigma$ is a permutation representing the symmetry operation.
    Thus, the Toeplitz matrix $\bs S_\delta$ is sparse with a  band-width that increases as the interaction radius  $\delta$ increases.
\medskip

\noindent {\bf (ii)\, Limit of $\bs S_{\delta}$ as 
$\delta\to 0^+$.}\,  
With the above formulas at our disposals, we consider the limiting cases $\delta\to 0$, and show that with the choice of corresponding kernels, we are able to derive the stiffness matrices of the usual  Laplacian operators. For this purpose,  we consider the following kernel functions
\begin{equation}\label{fracker0}
\delta\leq h, \quad   \rho_{\delta}(|\bss|) = C_{3,\delta}^\alpha |\bss|^{-3-\alpha}, \;\;\; \alpha\in [-1,2),
\end{equation}
where $C_{3,\delta}^\alpha=\frac{3(2-\alpha)}{2\pi}\delta^{\alpha-2}$ is chosen so that   $\frac{1}{3}\int_{\mathbb{B}^d_\delta} |\bss|^2 \rho_{\delta}(|\bss|) {\d} \bss=1.$ 
For $0\le \delta\le h,$ similarly, we can use \eqref{bspline4} to simplify $f_{k_1,k_2,k_3}(r,\theta,\varphi)$ in \eqref{cgdefqdd} and evaluate \eqref{gpa3d} directly to derive 
\begin{eqnarray*} 
&& t_{0,0,0}=\frac83+\frac{\alpha-2}2\Big(\frac{-\delta^7}{160\pi(\alpha-9)}+\frac{8\delta^6}{105\pi(\alpha-8)}-\frac{3\delta^5}{32(\alpha-7)}
\\&&\hspace{32pt}+\frac{4(\pi-4)\delta^4}{35\pi(\alpha-6)}+\frac{\delta^3}{\alpha-5}-\frac{8\delta^2}{5(\alpha-4)}-\frac{2\delta}{\alpha-3}\Big),
\\&&t_{0,0,1}=t_{0,1,0}=t_{1,0,0}=(\alpha-2)\Big(\frac{\delta^7}{480\pi(\alpha-9)}-\frac{22\delta^6}{945\pi(\alpha-8)}+\frac{5\delta^5}{192(\alpha-7)}
\\&&\hspace{32pt}-\frac{(9\pi-26)\delta^4}{315\pi(\alpha-6)}-\frac{11\delta^3}{72(\alpha-5)}+\frac{\delta^2}{5(\alpha-4)}+\frac{\delta}{18(\alpha-3)}\Big),
\\&&t_{0,1,1}=t_{1,0,1}=t_{1,1,0}=-\frac16+(\alpha-2)\Big(\frac{-\delta^7}{720\pi(\alpha-9)}+\frac{8\delta^6}{567\pi(\alpha-8)}-\frac{11\delta^5}{768(\alpha-7)}
\\&&\hspace{32pt}+\frac{(27\pi-16)\delta^4}{1890\pi(\alpha-6)}+\frac{\delta^3}{144(\alpha-5)}+\frac{13\delta}{144(\alpha-3)}\Big),
\\&&t_{1,1,1}=-\frac{1}{12}+(\alpha-2)\Big(\frac{-\delta^7}{1080\pi(\alpha-9)}-\frac{8\delta^6}{945\pi(\alpha-8)}+\frac{\delta^5}{128(\alpha-7)}
\\&&\hspace{32pt}+\frac{(9\pi+32)\delta^4}{1260\pi(\alpha-6)}+\frac{\delta^3}{24(\alpha-5)}-\frac{\delta^2}{20(\alpha-4)}+\frac{\delta}{24(\alpha-3)}\Big),
\\&&t_{0,0,2}=t_{0,2,0}=t_{2,0,0}=(\alpha-2)\Big(\frac{-\delta^7}{1920\pi(\alpha-9)}+\frac{4\delta^6}{945\pi(\alpha-8)}-\frac{\delta^5}{384(\alpha-7)}
\\&&\hspace{32pt}-\frac{8\delta^4}{315\pi(\alpha-6)}+\frac{\delta^3}{36(\alpha-5)}-\frac{\delta}{18(\alpha-3)}\Big),
\\&&t_{0,1,2}=t_{0,2,1}=t_{1,0,2}=t_{1,2,0}=t_{2,0,1}=t_{2,1,0}=(\alpha-2)\Big(\frac{\delta^7}{2880\pi(\alpha-9)}-\frac{\delta^6}{405\pi(\alpha-8)}
\\&&\hspace{32pt}+\frac{\delta^5}{768(\alpha-7)}+\frac{\delta^4}{189\pi(\alpha-6)}-\frac{\delta^3}{288(\alpha-5)}-\frac{\delta}{72(\alpha-3)}\Big),
\\&&t_{1,1,2}=t_{1,2,1}=t_{2,1,1}=(\alpha-2)\Big(\frac{-\delta^7}{4320\pi(\alpha-9)}+\frac{4\delta^6}{2835\pi(\alpha-8)}-\frac{\delta^5}{1536(\alpha-7)}
\\&&\hspace{32pt}+\frac{4\delta^4}{945\pi(\alpha-6)}-\frac{\delta^3}{288(\alpha-5)}-\frac{\delta}{288(\alpha-3)}\Big),
\\&&t_{0,2,2}=t_{2,0,2}=t_{2,2,0}=\frac{\alpha-2}2\Big(\frac{-\delta^7}{5760\pi(\alpha-9)}+\frac{\delta^6}{2835\pi(\alpha-8)}-\frac{2\delta^4}{945(\alpha-6)}\Big),
\\&&t_{1,2,2}=t_{2,1,2}=t_{2,2,1}=\frac{\alpha-2}2\Big(\frac{\delta^7}{8640\pi(\alpha-9)}-\frac{\delta^6}{5670\pi(\alpha-8)}-\frac{\delta^4}{1890(\alpha-6)}\Big),
\\&&t_{2,2,2}=\frac{-(\alpha-2)\delta^7}{69120\pi(\alpha-9)}.
\end{eqnarray*}
Thus we have 
\begin{equation}\label{limting0} 
  \lim\limits_{\delta\to 0^+}\bs{S}_{\delta}= \bs S_{x_1}\otimes\bs M_{x_2} \otimes \bs M_{x_3} +\bs M_{x_1}\otimes \bs S_{x_{2}}\otimes\bs M_{x_3}+\bs M_{x_1}\otimes \bs M_{x_{2}}\otimes\bs S_{x_3},
  \end{equation}
  where $\bs S_{x_j}=\tfrac 1 {h}{\rm diag}(-1,2,-1)$ and $\bs M_{x_j}=\tfrac{h}6 {\rm diag}(1,4,1)$ for $j=1,2,3$ are respectively the usual tri-diagonal FEM stiffness and mass matrices in one dimension.

\section{Fast and accurate computation of the stiffness matrix}\label{sec3}
In this section, we describe the implementation details of the efficient algorithm for computing the entries of the generating matrix, denoted as $\{t_{\bk}\}_{\bk \in \Theta_N}$, where 
the index set $\Theta_{N}:=\big\{\bn=(n_1,\cdots, n_d)\,:\, 0\le n_j\le N_j-1,\; 1\le j\le d\big\}.$ 
To fix the idea, we  focus on the hypersingular  kernel $\rho_\delta(|\bss|)$ defined in \eqref{kernel}, as the implementation of regular kernels is  simpler.
  
\subsection{Computation of \texorpdfstring{$t_{k_1,\cdots,k_d}$}. } 
We first introduce the $d$-dimensional spherical coordinates:
\begin{equation}\label{d_sphere}
\begin{split}
&x_{1}=r\cos\theta_{1};\; x_{2}=r\sin\theta_{1}\cos\theta_{2};\;\cdots\cdots; \;x_{d-1}=r\sin\theta_{1}\cdots\sin\theta_{d-2}\cos\theta_{d-1}; 
\\ &x_{d}=r\sin\theta_{1}\cdots\sin\theta_{d-2}\sin\theta_{d-1}, \;\; r=|\bx|, \;\; \theta_1, \cdots, \theta_{d-2}\in [0,\pi],  \;\;  \theta_{d-1}\in[0,2\pi],
\end{split}
\end{equation} 
and  the spherical volume element is 
\begin{equation}\label{coord1}
\begin{split} \d \bx=r^{d-1} \sin ^{d-2}\left(\theta_{1}\right) \sin ^{d-3}\left(\theta_{2}\right) \cdots \sin \left(\theta_{d-2}\right) \d r \,\d \theta_{1}\, \d \theta_{2} \cdots \d \theta_{d-1}:=r^{d-1} \d r\, \d\sigma (\hat\bx). 
\end{split}
\end{equation}
We also define the unit sphere $\mathbb{S}^{d-1}:=\{ \bx \in\RR^d : r=|\bx|=1\}$. 
Using the spherical coordinates, we denote $\hat{f}_{\bk}(r,\hat{\bx})=f_{\bk}(\bx)$.  Then, we present a critical property of the integrand in \eqref{tkjdd}, showing that $\hat{f}_{\bk}(r,\hat{\bx})$ at $r=0$ is a zero with multiplicity two, which plays a decisive role in handling hypersingular integrals, i.e., the kernel function \eqref{kernel}. 
\begin{prop}\label{Smulti}  
For any $ \bk\in \mathbb{N}^d_0$, we have
\begin{equation}\label{binomial1}  
\begin{split}
\hat{f}_{\bk}(r,\hat{\bx})\big|_{r=0}=\partial_r\hat{f}_{\bk}(r,\hat{\bx})\big|_{r=0} =0. 
\end{split}\end{equation} 
Moreover, if $|\bk|_\infty\ge \delta/h+2$, we have 
\begin{equation}\label{fijk3d_dims}
\hat{f}_{\bk}(r,\hat{\bx})=0,\;\;{\rm for\; all}\;(r,\hat{\bx})\in(0,\delta]\times \mathbb{S}^{d-1}.\end{equation} 
\end{prop}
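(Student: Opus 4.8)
The plan is to read off both assertions directly from the algebraic form of the integrand in \eqref{cgdefqdd}, exploiting a reflection symmetry that makes the case analysis unnecessary. Writing $\hat f_{\bk}(r,\hat{\bx})=f_{\bk}(r\hat{\bx})$ with $s_j=r\hat x_j$, where $\hat x_j$ is the $j$-th Cartesian coordinate of the unit vector $\hat{\bx}$, and setting
\[
P(r,\hat{\bx}):=\prod_{j=1}^d B_3\big(k_j+2-r\hat x_j/h\big),
\]
the formula \eqref{cgdefqdd} reads
\[
\hat f_{\bk}(r,\hat{\bx})=2\prod_{j=1}^d B_3(k_j+2)-P(r,\hat{\bx})-P(-r,\hat{\bx}),
\]
since replacing $\bss$ by $-\bss$ merely interchanges the second and third product terms. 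In particular $\hat f_{\bk}(\cdot,\hat{\bx})$ is an \emph{even} function of $r$ for each fixed direction $\hat{\bx}$.

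For \eqref{binomial1} I would argue as follows. By Lemma \ref{propertyB}(i) each factor $B_3$ lies in $C^2(\RR)$, so $r\mapsto P(\pm r,\hat{\bx})$ and hence $\hat f_{\bk}(\cdot,\hat{\bx})$ are $C^2$. Evaluating at $r=0$ gives $P(0,\hat{\bx})=\prod_j B_3(k_j+2)$, whence $\hat f_{\bk}(0,\hat{\bx})=2\prod_j B_3(k_j+2)-2\prod_j B_3(k_j+2)=0$. Because $\hat f_{\bk}(\cdot,\hat{\bx})$ is even and differentiable at the origin, its first derivative there vanishes; equivalently $\partial_r[P(r,\hat{\bx})+P(-r,\hat{\bx})]|_{r=0}=0$, which one may also confirm via the product rule, as the two chain-rule contributions to the $\pm$ products carry opposite signs $\mp\hat x_i/h$ and cancel factor by factor. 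This gives $\partial_r\hat f_{\bk}(0,\hat{\bx})=0$ and completes \eqref{binomial1}. The point worth stressing is that this single symmetry argument covers every $\bk\in\mathbb N_0^d$ and every dimension $d$ simultaneously, so no enumeration of the finitely many cases $0\le k_j\le 2$ is needed.

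For \eqref{fijk3d_dims} I would use the compact support from Lemma \ref{propertyB}(i), namely $\mathrm{supp}(B_3)\subseteq[0,4]$, together with the fact from \eqref{bspline4} that $B_3(t)=0$ for $t\ge 4$. Fix $\bk$ with $|\bk|_\infty\ge\delta/h+2$ and let $i$ attain the maximum, $k_i=|\bk|_\infty$. For $r\in(0,\delta]$ and $\hat{\bx}\in\mathbb S^{d-1}$ one has $|r\hat x_i/h|\le r/h\le\delta/h$, so every argument carrying the index $i$ satisfies
\[
k_i+2\ge\tfrac{\delta}{h}+4\ge 4,\qquad k_i+2\pm r\hat x_i/h\ge k_i+2-\tfrac{\delta}{h}\ge 4.
\]
Hence the $i$-th factor of each of the three products defining $\hat f_{\bk}$ vanishes, so all three products vanish and $\hat f_{\bk}(r,\hat{\bx})=0$ throughout $(0,\delta]\times\mathbb S^{d-1}$; this is the same support estimate that produces the sparsity of the generating matrix.

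There is no deep obstacle here: once the reflection symmetry $\hat f_{\bk}(-r,\hat{\bx})=\hat f_{\bk}(r,\hat{\bx})$ and the support of $B_3$ are isolated, both parts follow immediately. The only place demanding a little care is invoking $B_3\in C^2(\RR)$, which legitimizes differentiating at the knot $r=0$ and makes the vanishing of the odd-order term unambiguous; beyond that, the evenness of $f_{\bk}$ in $\bss$ turns \eqref{binomial1} into a one-line observation rather than a dimension-dependent computation.
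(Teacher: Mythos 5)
Your proof is correct, but for the first assertion \eqref{binomial1} it takes a genuinely different route from the paper. The paper first uses the support argument to reduce to the finitely many indices $0\le k_j\le 2$, and then verifies the double zero at $r=0$ by function-by-function enumeration, relying on the explicit piecewise-cubic expressions of $\hat{f}_{\bk}$ (listed for $d=2$ in Appendix \ref{appendixA}, omitted for $d\ge 3$). You instead write $\hat{f}_{\bk}(r,\hat{\bx})=2\prod_{j}B_3(k_j+2)-P(r,\hat{\bx})-P(-r,\hat{\bx})$ and observe that this is an even, $C^2$ function of $r$ (evenness because $\bss\mapsto-\bss$ swaps the two products in \eqref{cgdefqdd}; smoothness because $B_3\in C^2(\RR)$ by Lemma \ref{propertyB}(i), which indeed legitimizes differentiating at the knots $k_j+2$), so the value at $r=0$ cancels and the first derivative vanishes by symmetry. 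This is a one-line, dimension-independent argument that removes the $3^d$-case enumeration entirely, and it is arguably the cleaner proof of the proposition \emph{as stated}. What it does not deliver, and what the paper's enumeration buys, is the explicit closed-form factorization $\hat{f}_{\bk}(r,\hat{\bx})=r^2\tilde{f}_{\bk}(r,\hat{\bx})$ with $\tilde{f}_{\bk}$ given piecewise in closed form: those formulas are precisely what is used in \eqref{reskj} to evaluate the singular parts $\mathcal{S}_{\bk}$ exactly, without quadrature, so in the paper the enumeration does double duty (your argument yields only an abstract $r^2$ factor, e.g.\ via Taylor's theorem, not one usable for exact integration). For the second assertion \eqref{fijk3d_dims}, your argument --- the coordinate attaining $|\bk|_\infty$ forces one factor of each of the three products outside ${\rm supp}(B_3)\subseteq[0,4]$ --- is essentially identical to the paper's \eqref{sparseprop} and its tensor-product extension.
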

\begin{proof} 
The two-dimensional case corresponding to \eqref{fijk3d_dims} is given in \eqref{sparseprop}, and the tensor-product structure yields the extension to desired result in arbitrary dimensions.
Therefore, we only need to prove \eqref{binomial1}. 
As a direct consequence of \eqref{fijk3d_dims},  we find that for any $|\bk|_\infty\ge 3,$  
\begin{equation*}\label{fk1} 
\begin{split}
\hat{f}_{\bk}(r,\hat{\bx})=0, \;\; \text{for all}\;\;(r,\hat{\bx})\in(0,h]\times\mathbb{S}^{d-1},
\end{split}\end{equation*} 
so we only need to consider the remaining case with $0\leq k_j\leq 2$, $1\leq j\leq d$, that includes the $3^d$ special cases. 
We establish the result by a function-by-function enumeration. The complete set of formulas $\hat{f}_{\bk}(r,\hat{\bx})$ for the two-dimensional case ($d=2$) is provided in the Appendix \ref{appendixA}. For $d\ge 3$, the proof is analogous and relies on explicit analytic expressions for the B-splines. As the details are lengthy and introduce no new ideas, they are omitted here. 
\end{proof}

It is evident that when $\delta \leq h$, the entries of generating matrix $t_{\bs k}$ with hypersingular kernel \eqref{kernel} can be computed exactly. 
Therefore, our focus will be on the case $\delta>h$.
By the virtue of the singularities of the hypersingular kernels at origin, we split the integral \eqref{tkjdd} into
\begin{equation}\label{tkjmulx}\begin{split}
  t_{\bk} =&\frac{h^{d}}2\int_{0}^h\Big(\int_{\mathbb{S}^{d-1}} \hat{f}_{\bk}(r,\hat{\bx})\d\sigma (\hat\bx)\Big)\,\rho_\delta(r)r^{d-1}\,\d r
 \\& +\frac{h^{d}}2\int_{h}^{\delta}\Big(\int_{\mathbb{S}^{d-1}} \hat{f}_{\bk}(r,\hat{\bx})\d\sigma (\hat\bx)\Big)\,\rho_\delta(r)r^{d-1}\,\d r
:= \mathcal{S}_{\bk}+\mathcal{R}_{\bk}.
\end{split}\end{equation} 
Note that when $\delta\leq h$, it reduces to
\begin{equation}\label{tkjmulyy}\begin{split}
  t_\bk=\frac{h^{d}}2\int_{0}^{\delta}\Big(\int_{\mathbb{S}^{d-1}} \hat{f}_{\bk}(r,\hat{\bx})\d\sigma (\hat\bx)\Big)\,\rho_\delta(r)r^{d-1}\,\d r:= \mathcal{S}_\bk.
\end{split}\end{equation}
We compute $\mathcal{S}_{\bk}$ by evaluating the explicit expression of $\hat{f}_{\bk}(r,\hat{\bx})$ on $(r,\hat{\bx})\in (0,\,h\wedge\delta]\times\mathbb{S}^{d-1}$, where $a\wedge b:=\min\{a,b\}$, and we compute $\mathcal{R}_{\bk}$ using a suitable Legendre–Gauss quadrature.  Note that for more complex kernel functions, we can also leverage the analytical expression of 
$\hf_\bk(r,\hx)$ to facilitate the computation of $t_\bk$. 

In Figure~\ref{figdiskball}, the red circle in the left panel and the colored sphere in the right panel mark the integration domains of the integrand $\hat{f}_{\bk}(r,\hat{\bx})$, with the former given by $r\in(0,h)$ and $\theta\in(0,2\pi)$, and the latter by $r\in(0,h)$ and $\hat{\bx}\in(0,2\pi)\times(0,\pi)$. The black mesh indicates the radial scale for the circular or spherical integration region. Moreover, $\hat{f}_{\bk}(r,\hat{\bx})$ is piecewise cubic on this black mesh, and within each grid cell it is cubic B-spline along each coordinate direction.

\begin{figure}[!ht]\hspace{-24pt} 
\begin{minipage}[t]{0.43\textwidth}
\centering
\rotatebox[origin=cc]{-0}{\includegraphics[width=1.2\textwidth,height=0.85\textwidth]{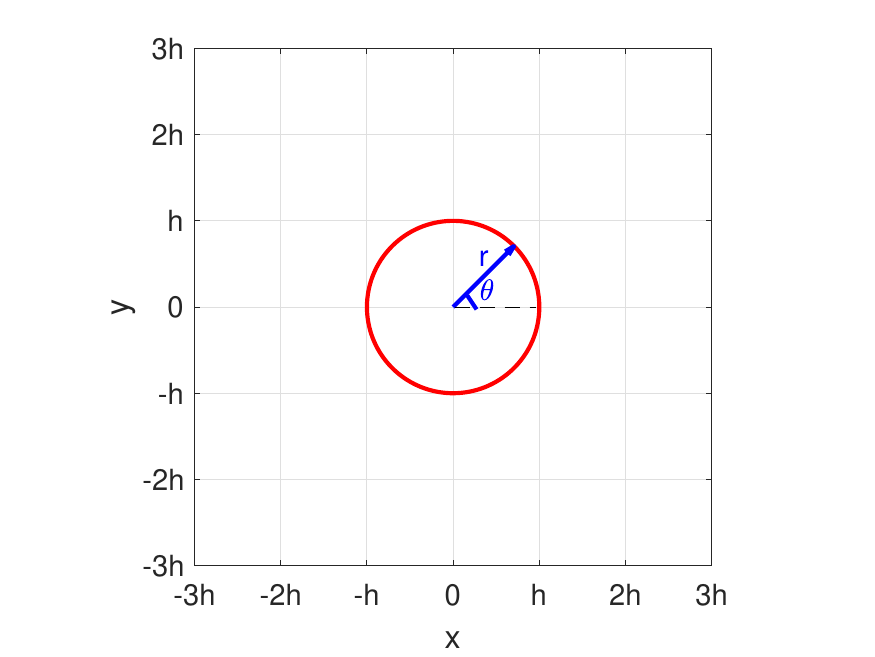}}
\end{minipage}\hspace{0pt}
\begin{minipage}[t]{0.43\textwidth}
\centering
\rotatebox[origin=cc]{-0}{\includegraphics[width=1.2\textwidth,height=0.85\textwidth]{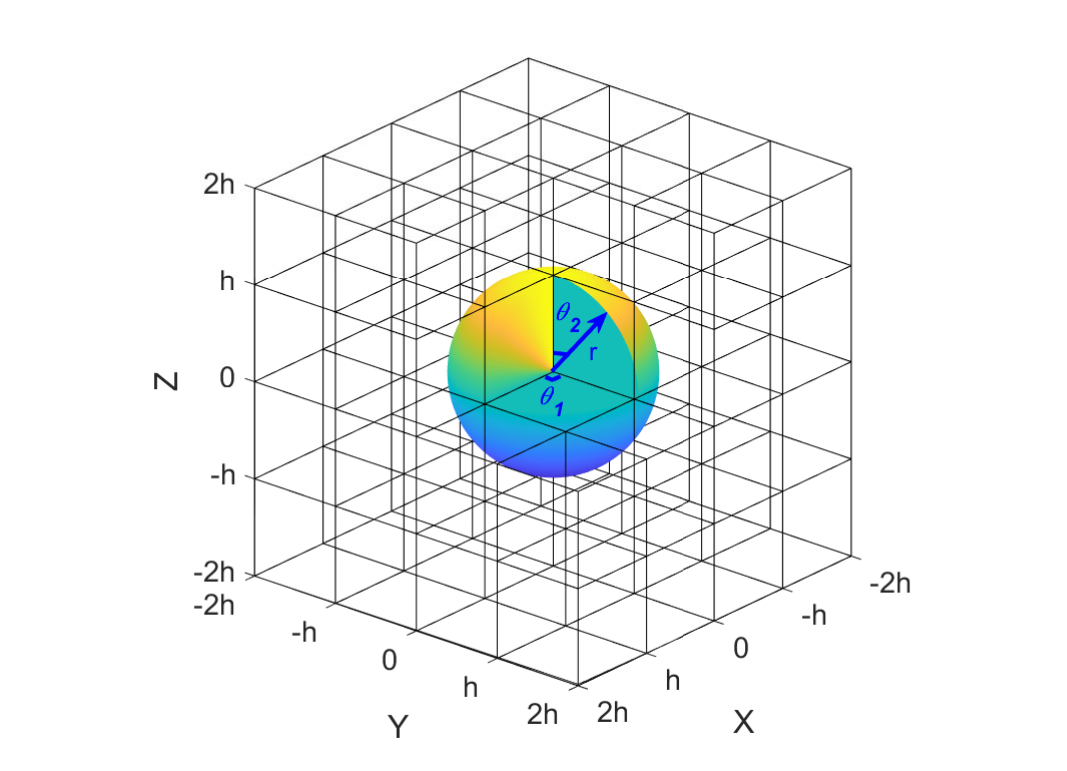}}
\end{minipage}
\vskip -5pt
\caption
{\small The integration domain of $\hat{f}_\bk(r,\hx)$. Left: $\hat{f}_{k_1,k_2}(r,\theta_1)$ with $(r,\theta_1)\in (0,h)\times(0,2\pi)$ ; Right: $\hat{f}_{k_1,k_2,k_3}(r,\theta_1,\theta_2)$ with $(r,\theta_1,\theta_2)\in (0,h)\times(0,\pi)\times(0,2\pi)$.}\label{figdiskball}
\end{figure}

\subsubsection{Computation of the singular part of the integral}
We first consider the computation of the singular integral $\mathcal{S}_{\bk}$. 
Proposition \ref{Smulti} implies that $\hf_{\bk}(r,\hx)=r^2\tilde{f}_{\bk}(r,\hx)$, then this integrand is capable of dealing with the hypersingular kernels \eqref{kernel}. More precisely, we plugging the kernel \eqref{kernel} into \eqref{tkjmulx} to obtain that
\begin{equation}\begin{split}\label{reskj}
\mathcal{S}_\bk=c_{d,\delta}^\alpha h^{d-\alpha}\int_{0}^{h}\Big(\int_{\mathbb{S}^{d-1}}\tilde f_\bk(r,\hat{\bx})\,\d\sigma(\hat{\bx})\Big)r^{1-\alpha}\,\d r,
\end{split}\end{equation}
where we only need to replace the upper limit of the integral in the $r$-direction with $\delta$ when $\delta<h$.
Hence, with the explicit representation of $\tilde f_\bk(r,\hx)$, the integrals $\{\mathcal{S}_\bk\}$ can be evaluated explicitly without any truncation errors.
 Moreover, due to the symmetry properties, it is unnecessary to compute the entire set $\{\mathcal{S}_{\bk}\}_{\bk\in \Theta_2}$ that is $3^d$ elements. 
 To streamline this process, we introduce the following index set that excludes the symmetric elements 
\begin{equation*}
\widetilde{\Theta}_N=\big\{\bn=(n_1,\cdots, n_d)\,:\, 0\leq n_j\leq n_{j+1}\le N,\; 1\leq j\leq d-1 \big\}.
\end{equation*}  
It is clear that $\widetilde{\Theta}_N$ is a subset of $\Theta_N$, and, in particular, we find that $\widetilde{\Theta}_2$ contains only $\frac{(d+2)(d+1)}{2}$ elements.
For instance, in the 2D case, we only need to compute the following $6$ elements:
$$\{\mathcal{S}_{\bk}\}_{\bk\in \widetilde\Theta_2}=\{\mathcal{S}_{0,0},\mathcal{S}_{1,1},\mathcal{S}_{2,2},\mathcal{S}_{0,1},\mathcal{S}_{0,2},\mathcal{S}_{1,2}\},$$
and in the 3D case, we only need to compute the following $10$ elements:
$$\{\mathcal{S}_{\bk}\}_{\bk\in \widetilde\Theta_2}=\{\mathcal{S}_{0,0,0},\mathcal{S}_{1,1,1},\mathcal{S}_{2,2,2},\mathcal{S}_{0,0,1},\mathcal{S}_{0,0,2},\mathcal{S}_{0,1,1},\mathcal{S}_{1,1,2},\mathcal{S}_{0,2,2},\mathcal{S}_{1,2,2},\mathcal{S}_{0,1,2}\}.$$ 
As a result, we highlight that the number of elements needing computation is reduced from $3^d$ to $\mathcal{O}(d^2)$ due to symmetry, a reduction that becomes increasingly advantageous in higher dimensions. 
 
\subsubsection{Computation of the regular part of the integral}
Before we turn to the regular integral $\mathcal{R}_{\bk}$, we first show the regularity of the integrand $\hat{f}_{\bk}(r,\hx)$ with $r>h$. 
Let $\Lambda=I_r\times \mathbb{S}^{d-1}$, with $I_r=(h,\delta)$.  Note that, in this subsection, we assume that $\delta\ge h$, otherwise $\mathcal{R}_{\bk}=0$.
\begin{prop}\label{Singlaadd}  
For any $\bk\in \mathbb{N}^d_0$, the integrand $\hf_{\bk}(r,\hx)\in C^2(\Lambda)$, with $\Lambda=(h,\delta)\times\mathbb{S}^{d-1}$. Thus $\hf_{\bk}(r,\hx)\in H^3(\Lambda)$,
where $H^3(\Lambda)$ is the usual Sobolev space.
\end{prop}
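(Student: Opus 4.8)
The plan is to prove the sharper statement $\hat f_{\bk}\in C^2(\Lambda)\cap W^{3,\infty}(\Lambda)$ and then invoke the inclusion $W^{3,\infty}(\Lambda)\hookrightarrow H^3(\Lambda)$, which holds because $\Lambda=(h,\delta)\times\mathbb S^{d-1}$ has finite measure. The starting observation is that $\hat f_{\bk}(r,\hat\bx)=f_{\bk}(\bx)$ with $\bx=r\hat\bx$, where $f_{\bk}$ is the function of $\bss\in\mathbb R^d$ in \eqref{cgdefqdd}. Hence I would first establish the regularity of $f_{\bk}$ on the Euclidean annulus $A=\{\bss:h<|\bss|<\delta\}$ and then transport it to $\Lambda$ through the polar map $\Phi:(r,\hat\bx)\mapsto r\hat\bx$. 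The point that makes this transport clean is precisely $h>0$: on $(h,\delta)\times\mathbb S^{d-1}$ the map $\Phi$ is a $C^\infty$ diffeomorphism onto $A$ (with $C^\infty$ inverse and derivatives of all orders bounded away from the origin), so both $C^2$ and $W^{3,\infty}$ regularity are preserved under the pullback $f\mapsto f\circ\Phi$. This is exactly where the restriction $r>h$ enters, keeping us off the singular locus of the polar change of variables.

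For $f_{\bk}$ itself, recall from Lemma \ref{propertyB}(i) (with $p=3$) that $B_3\in C^2(\mathbb R)$, so each factor $B_3(k_j+2\pm s_j/h)$ is $C^2$ in $\bss$; since $f_{\bk}$ is a finite sum of products of such separated-variable factors, $f_{\bk}\in C^2(\mathbb R^d)$, which after pullback gives the first assertion $\hat f_{\bk}\in C^2(\Lambda)$. To upgrade to $H^3$ I would sharpen the one-dimensional statement: by the explicit form \eqref{bspline4}, $B_3$ is piecewise cubic with integer knots, so $B_3''$ is continuous and piecewise linear, hence Lipschitz; thus $B_3\in W^{3,\infty}(\mathbb R)$ with bounded, piecewise-constant weak third derivative $B_3'''$. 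Because the variables are separated in each product $\prod_{j}B_3(k_j+2\pm s_j/h)$, every partial derivative of total order at most three factorizes into one-dimensional derivatives $B_3^{(\alpha_j)}$, each of which is bounded; hence each product lies in $W^{3,\infty}(\mathbb R^d)$. Summing the finitely many products yields $f_{\bk}\in W^{3,\infty}(\mathbb R^d)$, and pulling back by $\Phi$ gives $\hat f_{\bk}\in W^{3,\infty}(\Lambda)\hookrightarrow H^3(\Lambda)$.

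I expect the main obstacle to be the gap between the literally stated $C^2$ regularity and the claimed $H^3$ membership: continuity of the second derivatives alone does not produce a third derivative in $L^2$. The resolution hinges on the piecewise-polynomial structure of $B_3$, and the delicate point is to verify that the weak third derivative of $B_3$—and hence of the compositions and products—carries no singular (Dirac) contribution across the spline knots. This is guaranteed precisely by the continuity of $B_3''$ coming from $B_3\in C^2$: as the argument $k_j+2\pm s_j/h$ sweeps through an integer knot, only $B_3'''$ jumps, so the third distributional derivative remains a genuine $L^\infty$ function rather than a surface measure. Finally, since $\mathbb S^{d-1}$ is a manifold, I would interpret $H^3(\Lambda)$ through the diffeomorphism onto the Euclidean annulus $A$ (equivalently via a finite atlas on the sphere), so that the conclusion reduces to the standard inclusion $W^{3,\infty}\hookrightarrow H^3$ on a bounded Euclidean domain.
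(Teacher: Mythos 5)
Your proposal is correct, and for the first assertion it follows the same route as the paper: $B_3\in C^2(\mathbb{R})$ by the explicit formula \eqref{bspline4}, the sum-of-separated-products structure of $f_{\bk}$ in \eqref{cgdefqdd} preserves $C^2$, and the passage to spherical variables (the paper's ``chain rule'', your pullback by $\Phi$) is harmless on $\Lambda=(h,\delta)\times\mathbb{S}^{d-1}$ precisely because $r$ is bounded away from the origin. Where you genuinely go beyond the paper is the second assertion. The paper's proof dismisses it with ``the second regularity result is a direct consequence of the first one,'' which, read literally, is a gap: $C^2$ regularity on a bounded domain yields $H^2$, not $H^3$, and a $C^2$ function need not possess any weak third derivative in $L^2$ at all. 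Your argument supplies exactly the missing ingredient, namely the piecewise-polynomial structure of the spline: $B_3''$ is continuous and piecewise linear, hence Lipschitz, so $B_3\in W^{3,\infty}(\mathbb{R})$ with a bounded piecewise-constant weak third derivative and no Dirac contributions at the knots; the separated-variable products and finite sums then lie in $W^{3,\infty}(\mathbb{R}^d)$, the pullback by the smooth diffeomorphism $\Phi$ (derivatives bounded since $r>h>0$) preserves this, and $W^{3,\infty}(\Lambda)\hookrightarrow H^3(\Lambda)$ because $\Lambda$ has finite measure. In short, what the paper's version buys is brevity; what yours buys is a rigorous proof of the $H^3$ claim that the paper only asserts, and your diagnosis of the $C^2$-versus-$H^3$ gap as ``the main obstacle'' is exactly right.
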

\begin{proof}
In view of \eqref{bspline4},  it is easy to check that $B_3(t)\in C^2(\mathbb{R})$. Then, by the chain rule, one verifies readily that $\hf_\bk(r,\hx)\in C^2(\Lambda)$. The second regularity result is a direct consequence of the first one. This ends the proof.
\end{proof}

With the above regularity, it is sufficient to use Legendre-Gauss quadrature to approximate $\mathcal{R}_{\bk}$. More precisely, 
\begin{equation}\label{rijk3d}\begin{split}
  \mathcal{R}_{\bk}\approx  \mathcal{R}^\ast_{\bk}:= \frac{h^{d}}2 \sum_{p=0}^{N_r}\sum_{\ell_1=0}^{N_{\theta_1}}\cdots\sum_{\ell_{d-1}=0}^{N_{\theta_{d-1}}}\check{f}_\bk(r_p,\theta_{\ell_1},\cdots,\theta_{\ell_{d-1}})\rho_{\delta}(r_p)r_p^{d-1}\,\omega_p^r\omega_{\ell_1}\cdots\omega_{\ell_{d-1}},  
\end{split}\end{equation}
where $$\check{f}_\bk(r_p,\theta_{\ell_1},\cdots,\theta_{\ell_{d-1}})=\hat{f}_\bk(r_p,\theta_{\ell_1},\cdots,\theta_{\ell_{d-1}})\sin ^{d-2}(\theta_{\ell_1}) \sin ^{d-3}(\theta_{\ell_2}) \cdots \sin(\theta_{\ell_{d-2}}).$$
In the above, $\{r_p,\omega^r_p\}_{p=0}^{N_r}$, $\{\theta_{\ell_j},\omega_{\ell_j}\}_{\ell_j=0}^{N_j}$, $0\leq j\leq d-1$ denote the Legendre-Gauss quadrature nodes and weights in $(h,\delta)$ and $\mathbb{S}^{d-1}$, respectively.

Let $I=(a,b).$
For any $u\in C(I),$ we define the interpolation operator $\mathcal{I}_N:\,C(I)\rightarrow \mathcal{P}_N$ such that 
\begin{equation}
\label{interpo}
\mathcal{I}_{N}u(x_j)=u(x_j),\quad 0\leq j\leq N,
\end{equation}
where $\{x_j\}$ denote the Legendre-Gauss quadrature points over $I,$ and $\mathcal{P}_N$ denote the set of all algebraic polynomials of degree $\leq N$.
According to \cite[Theorem 3.41]{ShenTangWang2011},  if $\partial_x^mu\in L^2_{\omega^m}(I)$ with $m\ge1$ and $\omega^m(x)=(b-x)^{\frac{m}2}(x-a)^{\frac{m}2}$, we have
\begin{equation}\label{interr}
\|\mathcal{I}_Nu-u\|_{L^2(I)}\leq cN^{-m}\|\partial_x^mu\|_{L^2_{\omega^m}(I)},
\end{equation}
where $c$ is a positive constant independent of $m$, $N$ and $u$. Note that, in \eqref{interr}, we converted the standard interval $(-1,1)$ to $I=(a,b)$ by using the linear transformation.

We define the $d$-dimensional interpolation $\mathcal{I}^d_{N}:=\mathcal{I}_{N_r}\!\circ \mathcal{I}_{N_{\theta_1}}\circ\cdots\circ\mathcal{I}_{N_{\theta_{d-1}}}:C(\Lambda)\rightarrow \mathcal{P}_{N_r}\times\mathcal{P}_{N_{\theta_1}}\times\cdots\times\mathcal{P}_{N_{\theta_{d-1}}}$, which satisfies 
\begin{equation}
\label{interpoddim}
\mathcal{I}^d_{N}u(r_p,\theta_{\ell_1},\cdots,\theta_{\ell_{d-1}})=u(r_p,\theta_{\ell_1},\cdots,\theta_{\ell_{d-1}}),\quad 0\leq p\leq N_r,\;0\leq \ell_j\leq N_{\theta_j},\;1\leq j\leq d-1,
\end{equation}
where $\mathcal{I}_{N_r}$ and $\mathcal{I}_{N_{\theta_j}}$ denote the one dimensional interpolations in $r$- and spherical-direction as in \eqref{interpo}, respectively.

To describe the error more precisely, we introduce the space $B^m(\Lambda)$ for $m\ge d$ with the semi-norm and norm
\begin{equation*}\begin{split}
|u|_{B^{m}(\Lambda)} = \Big( \sum_{j=1}^d \sum_{\bmm \in \mathcal{E}_j} \|\partial^\bmm_\bx u\|^2_{\omega^{m_j{\bs e}_j}}\Big)^{1/2},\;\;\;\|u\|_{B^{m}(\Lambda)} = 
\Big( \|u\|^2_{L^2(\Lambda)} + |u|^2_{B^m(\Lambda)}\Big)^{1/2},
\end{split}\end{equation*}
where 
$$\mathcal{E}_j = \Big\{ \bmm \in \mathbb{N}_0^d : d \leq m_j \leq m; \;m_i \in \{0, 1\}, i \neq j; \;\sum_{k=1}^d m_k = m\Big\}.$$
In the above, $\bx=(r,\theta_1,\cdots,\theta_{d-1})$ and the weight functions are given by
\begin{equation*}\begin{split}
&\omega^\bmm(\bx):=\omega_r^{m_1}(r)\omega^{m_2}_{\theta_1}(\theta_1)\cdots\omega^{m_d}_{\theta_{d-1}}(\theta_{d-1}),\;\;\;\omega^{m_1}_r(r)=(\delta/h-r)^{m_1}(r-1)^{m_1},
\\&\omega^{m_d}_{\theta_{d-1}}(\theta_{d-1})=(2\pi-\theta_{d-1})^{m_d}\theta_{d-1}^{m_d},\;\;\;\omega^{m_{j+1}}_{\theta_j}(\theta_{j})=(\pi-\theta_j)^{m_{j+1}}\theta_j^{m_{j+1}}, \;\;\;1\leq j\leq d-2.
\end{split}\end{equation*}
\begin{thm}\label{rerr} If $\rho_\delta(hr)\in C^2(I_r)$. For any $\bk\in\mathbb{N}^d_0$, we have
\begin{equation}
\begin{split}
|\mathcal{R}_\bk-{\mathcal{R}}^\ast_\bk|\leq cN^{-3}|F_\bk|_{B^3(\Lambda)},
\end{split}
\end{equation}
where $c$ is a positive constant independent of $N$ and $$F_\bk(r,\hx):=F_\bk(r,\hx;h)=\rho_{\delta}(r)r^{d-1} \sin ^{d-2}\left(\theta_{1}\right) \sin ^{d-3}\left(\theta_{2}\right) \cdots \sin \left(\theta_{d-2}\right)\hf_\bk(r,\hx).$$ 
\end{thm}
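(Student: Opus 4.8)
The plan is to recast the quadrature error as a tensor-product interpolation error and then lean on the one-dimensional estimate \eqref{interr} together with the regularity furnished by Proposition \ref{Singlaadd}. First I would note that, by the definitions of $\check f_\bk$ and of $F_\bk$, the exact regular part is the single integral
$$\mathcal{R}_\bk = \frac{h^d}{2}\int_\Lambda F_\bk(r,\hx)\,\d r\,\d\theta_1\cdots\d\theta_{d-1},$$
whereas the rule \eqref{rijk3d} is precisely the tensor-product Legendre--Gauss quadrature applied to $F_\bk$. Since each one-dimensional Gauss rule with $N_\bullet+1$ nodes is exact for polynomials of degree $\le 2N_\bullet+1$, while the interpolant $\mathcal{I}^d_N F_\bk$ from \eqref{interpoddim} has degree $\le N_\bullet$ in each coordinate and agrees with $F_\bk$ at all quadrature nodes, the sum in \eqref{rijk3d} equals the exact integral of $\mathcal{I}^d_N F_\bk$ over $\Lambda$. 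Consequently
$$\mathcal{R}_\bk - \mathcal{R}^\ast_\bk = \frac{h^d}{2}\int_\Lambda\big(F_\bk - \mathcal{I}^d_N F_\bk\big)\,\d r\,\d\theta_1\cdots\d\theta_{d-1},$$
and Cauchy--Schwarz gives $|\mathcal{R}_\bk-\mathcal{R}^\ast_\bk|\le \tfrac{h^d}{2}\,|\Lambda|^{1/2}\,\|F_\bk-\mathcal{I}^d_N F_\bk\|_{L^2(\Lambda)}$, reducing the theorem to a bound on the multivariate interpolation error.

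The second step estimates $\|F_\bk - \mathcal{I}^d_N F_\bk\|_{L^2(\Lambda)}$ by the standard telescoping of the tensor-product projector,
$$I - \mathcal{I}_{N_r}\mathcal{I}_{N_{\theta_1}}\cdots\mathcal{I}_{N_{\theta_{d-1}}} = (I-\mathcal{I}_{N_r}) + \mathcal{I}_{N_r}(I-\mathcal{I}_{N_{\theta_1}}) + \cdots + \mathcal{I}_{N_r}\cdots\mathcal{I}_{N_{\theta_{d-2}}}(I-\mathcal{I}_{N_{\theta_{d-1}}}),$$
which splits the error into $d$ terms, each carrying a genuine one-dimensional error $(I-\mathcal{I}_{N_\bullet})$ in a single active direction preceded only by interpolation operators in the remaining directions. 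In the active direction I would apply \eqref{interr} with $m=3$ to gain the factor $N^{-3}$ against a weighted $L^2$ norm of a high-order derivative, while in the inactive directions I use the uniform $L^2$-stability of the Legendre--Gauss interpolation operators. Summing the $d$ contributions reproduces exactly the semi-norm $|F_\bk|_{B^3(\Lambda)}$, whose index sets $\mathcal{E}_j$ and weights $\omega^\bmm$ are tailored so that only a single derivative of order up to three in one variable, with at most first-order derivatives in the others, ever appears; this yields $\|F_\bk-\mathcal{I}^d_N F_\bk\|_{L^2(\Lambda)}\le cN^{-3}|F_\bk|_{B^3(\Lambda)}$ with $N=\min_\bullet N_\bullet$, and hence the claim.

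To be sure the right-hand side is finite I would check $F_\bk\in B^3(\Lambda)$. By Proposition \ref{Singlaadd}, on $\Lambda=(h,\delta)\times\mathbb{S}^{d-1}$ the function $\hf_\bk$ is a $C^2$ piecewise cubic, hence a cubic spline whose third derivatives are piecewise constant and therefore lie in the weighted $L^2$ spaces underlying $B^3(\Lambda)$; the Jacobian factor $r^{d-1}\sin^{d-2}(\theta_1)\cdots\sin(\theta_{d-2})$ is $C^\infty$, and $\rho_\delta(hr)\in C^2(I_r)$ by hypothesis (indeed the hypersingular kernel \eqref{kernel} is smooth away from the origin, so this holds automatically on the compact interval $[h,\delta]$). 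A Leibniz expansion of the product then shows every mixed derivative entering $|F_\bk|_{B^3(\Lambda)}$ is controlled.

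The hard part will be the second step: making the telescoping rigorous requires the uniform boundedness in $N$ of the intermediate Legendre--Gauss interpolation operators in the \emph{weighted} $L^2$ norms (not merely in $L^\infty$ or unweighted $L^2$), so that prepending $\mathcal{I}_{N_r}\cdots\mathcal{I}_{N_{\theta_j}}$ to a one-dimensional interpolation error preserves the $N^{-3}$ rate. This weighted stability, and its precise matching with the derivative-and-weight bookkeeping encoded in $\mathcal{E}_j$ and $\omega^\bmm$ (which is exactly why the lower bound $m_j\ge d$ is imposed), is the technical heart of the argument; once it and \eqref{interr} are in place, the remaining manipulations are routine.
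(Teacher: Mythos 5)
Your proposal is correct and takes essentially the same route as the paper's own proof: the paper likewise identifies $\mathcal{R}_\bk-\mathcal{R}^\ast_\bk$ with $\tfrac{h^d}{2}\int_\Lambda(\mathcal{I}-\mathcal{I}^d_N)F_\bk\,\d\hx\,\d r$ (via Gauss exactness on the interpolant), applies Cauchy--Schwarz, checks $F_\bk\in B^3(\Lambda)$ via Proposition \ref{Singlaadd}, and invokes the tensor-product estimate $\|\mathcal{I}^d_Nu-u\|\le cN^{-m}|u|_{B^m(\Lambda)}$ built from \eqref{interr}. The only difference is presentational: the telescoping decomposition and the weighted-stability bookkeeping that you develop explicitly is exactly what the paper outsources to ``an argument similar to the proof of Theorem 8.6 in \cite{ShenTangWang2011}''.
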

\begin{proof} 
By using an argument similar to the proof of Theorem 8.6 in \cite{ShenTangWang2011}, we obtain from \eqref{interr} that
\begin{equation}
\begin{split}
&\|\mathcal{I}^d_{N}u-u\|\leq c N^{-m}|u|_{B^m(\Lambda)}. 
\end{split}
\end{equation}
With the aid of Proposition \ref{Singlaadd}, one verifies readily that $F_\bk(r,\hx)\in B^3(\Lambda)$,
which together with the above estimates and Cauchy-Schwarz inequality leads to
\begin{equation*}
\begin{split}
|\mathcal{R}_\bk-{\mathcal{R}}^\ast_\bk|&=\Big|\frac{h^{d}}{2}\int_{h}^{\delta}\int_{\mathbb{S}^{d-1}}(\mathcal{I}-\mathcal{I}^d_{N})F_\bk(r,\hx)\d\hx \,\d r\Big|
\\&\leq c\Big(\int_{h}^{\delta}\int_{\mathbb{S}^{d-1}}\big((\mathcal{I}-\mathcal{I}^d_{N})F_\bk(r,\hx)\big)^2\d\hx \,\d r\Big)^{\frac12}\leq cN^{-3}|F_\bk|_{B^3(\Lambda)}.
\end{split}
\end{equation*}
 This ends the proof.
\end{proof}

Consequently, we summarise the algorithm for computing the entries  $\{t_\bk\}_{\bk\in\widetilde{\Theta}_N}$ of the generating matrix $\bs G$ as follows.

\LinesNotNumbered
\begin{algorithm}[H]
\caption{Evaluate $\{t_\bk\}$ with $\bk\in \widetilde{\Theta}_N$}\label{Alg}
\small
\setlength{\algomargin}{0.1em}  
\KwIn{the value of $\delta>0$ and mesh size $h$}

Compute $\{\mathcal{S}_\bk\}_{\bk\in\widetilde{\Theta}_2}$ by \refe{reskj}, and set $\mathcal{S}_\bk=0$ with $|\bk|_{\infty}\ge3$ \tcc*{Singular part}
\If{$\delta\leq h$}{
  Set $t_\bk=\mathcal{S}_\bk$ with $\bk\in\widetilde{\Theta}_2$       \tcc*{Update $\{t_\bk\}$}
}
\Else{
\For{$k_1=0,\cdots,N$}{
\For{$k_2=0,\cdots,k_1$}{
 \hspace{12pt}$\vdots$
 
\For{$k_d=0,\cdots,k_{d-1}$}{
Compute ${\mathcal{R}}^\ast_\bk$ by \refe{rijk3d} \tcc*{Regular part}

Set  $t_\bk=\mathcal{S}_\bk+{\mathcal{R}}^\ast_\bk$ \tcc*{Update $\{t_\bk\}$}

}
      \hspace{12pt}$\vdots$  
     }
}

}
\KwOut{$\{t_\bk\}$ with $\bk\in \widetilde{\Theta}_N$}
\end{algorithm}
\medskip

\begin{figure}[!ht] \vspace{-10pt}
\subfigure[fixed $\mathcal{R}_{0,0}^\ast$]{
\begin{minipage}[t]{0.43\textwidth}
\centering
\rotatebox[origin=cc]{-0}{\includegraphics[width=1\textwidth,height=0.7\textwidth]{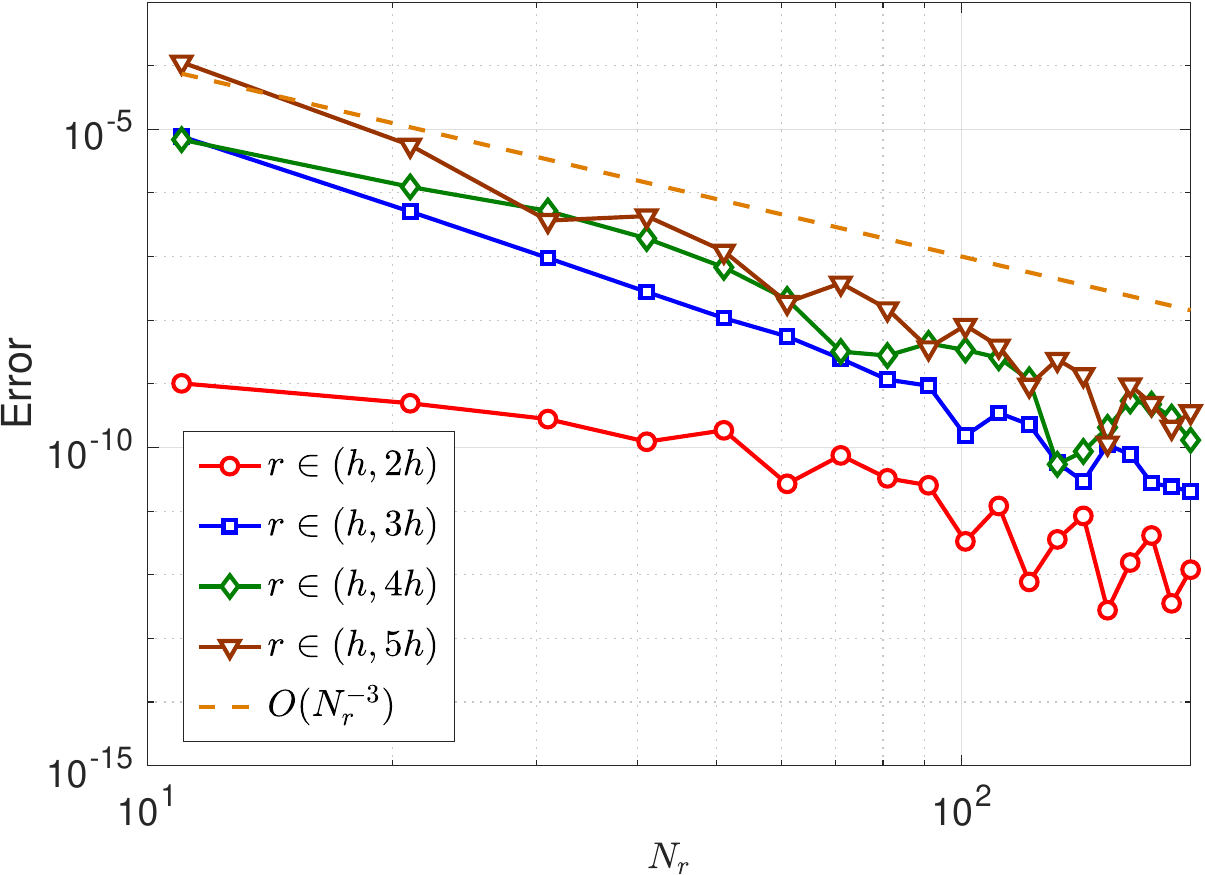}}
\end{minipage}} 
\subfigure[fixed $r\in(h,2h)$]{
\begin{minipage}[t]{0.43\textwidth}
\centering
\rotatebox[origin=cc]{-0}{\includegraphics[width=1\textwidth,height=0.7\textwidth]{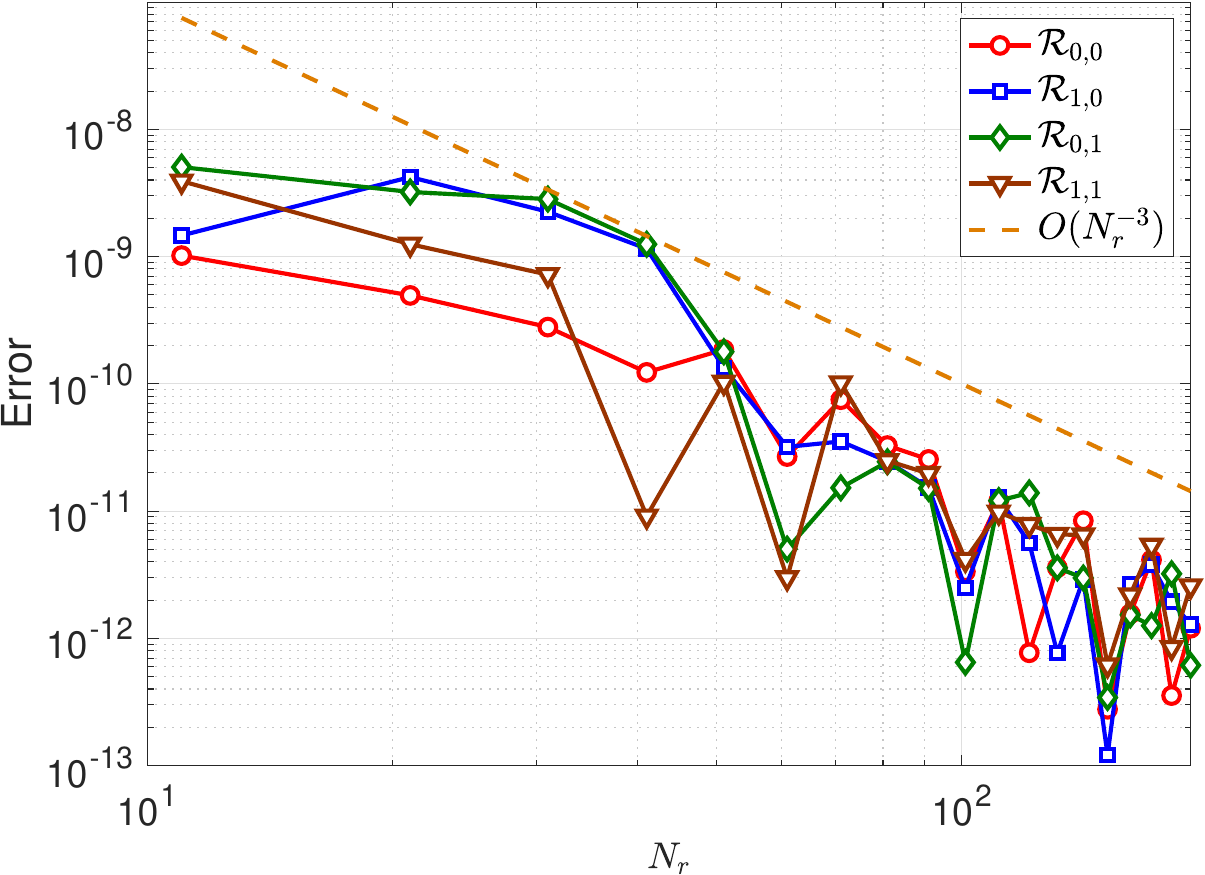}}
\end{minipage}} 
\subfigure[fixed $\mathcal{R}_{0,0}^\ast$]{
\begin{minipage}[t]{0.43\textwidth}
\centering
\rotatebox[origin=cc]{-0}{\includegraphics[width=1\textwidth,height=0.7\textwidth]{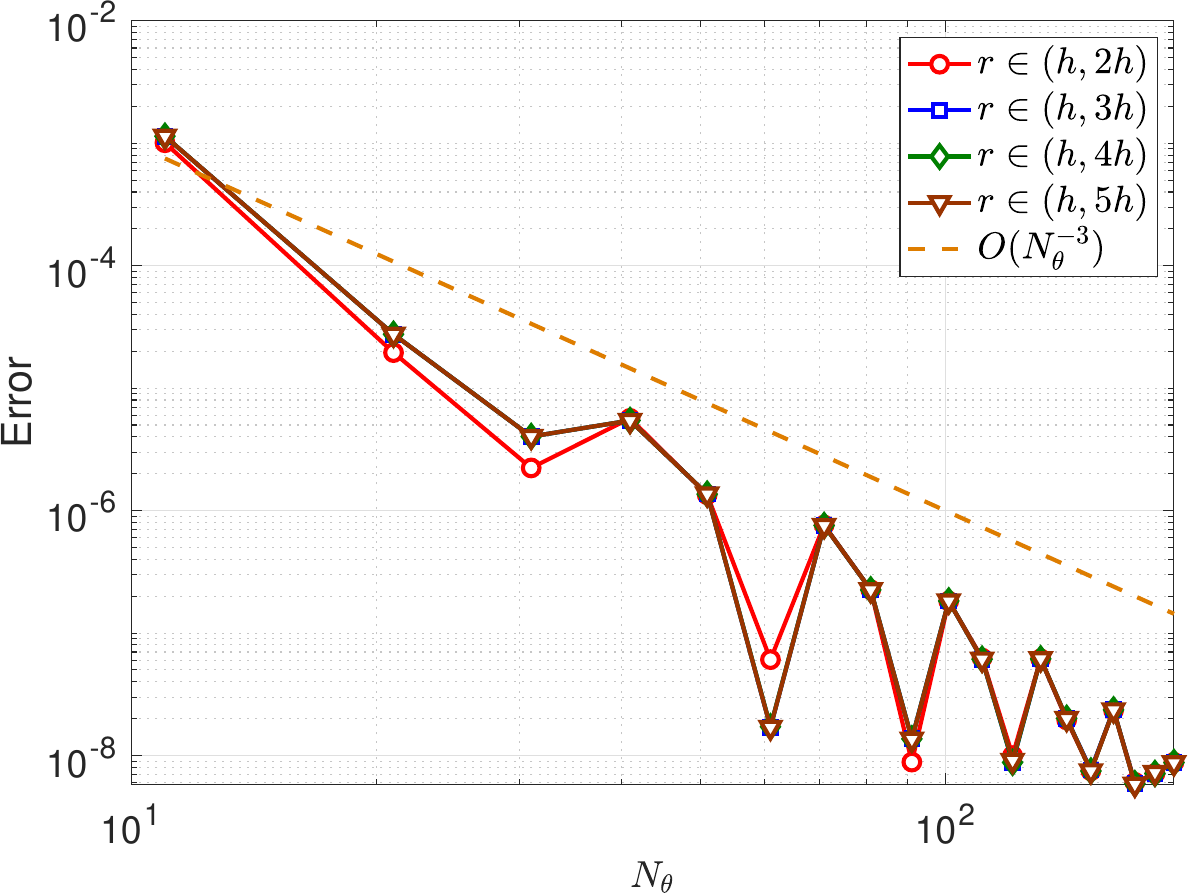}}
\end{minipage}} \vspace{-10pt}
\subfigure[fixed $r\in(h,2h)$]{
\begin{minipage}[t]{0.43\textwidth}
\centering
\rotatebox[origin=cc]{-0}{\includegraphics[width=1\textwidth,height=0.7\textwidth]{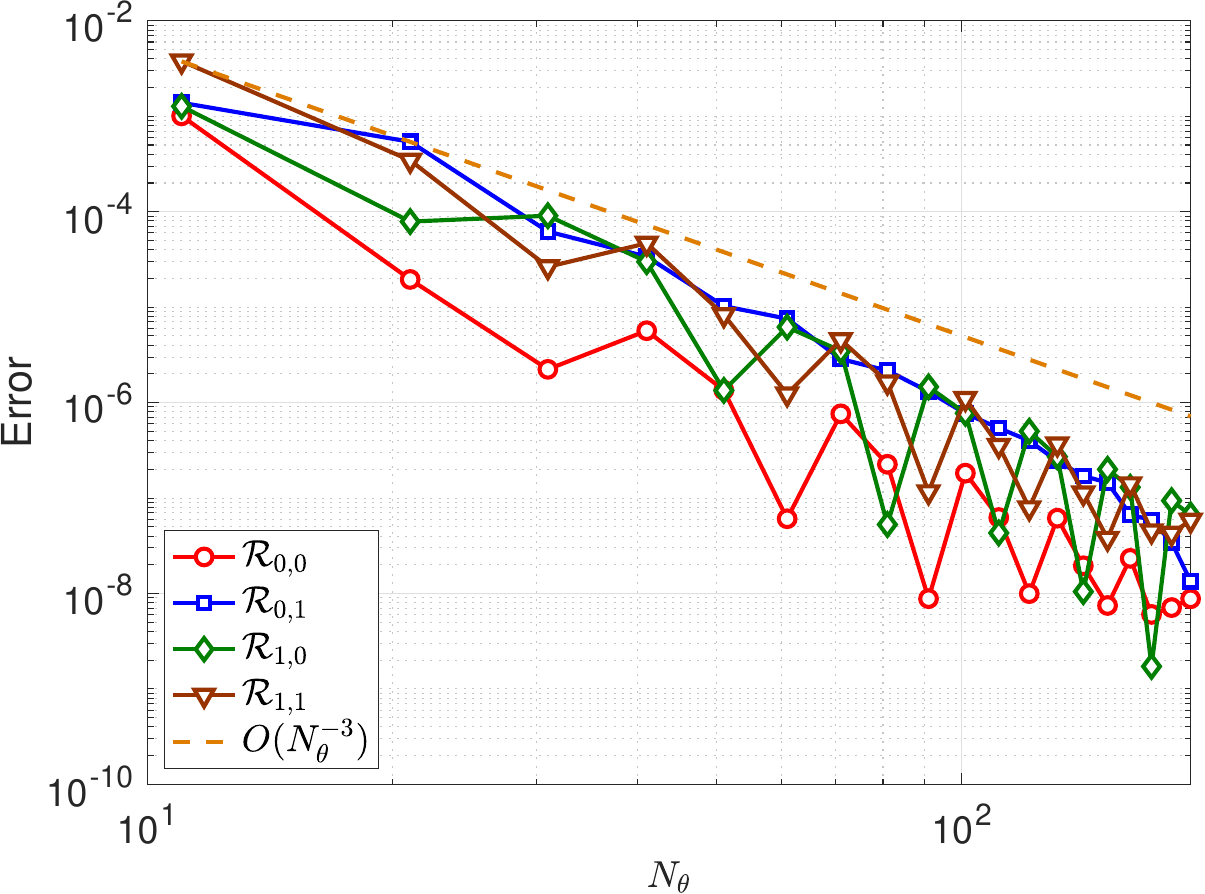}}
\end{minipage}}
\caption
{\small Errors of Legendre-Gauss quadrature for $\mathcal{R}^\ast_{k_1,k_2}$ in two dimensions. (a). against $N_r$ with different $r$; (b). against $N_r$  with different $k_1,k_2$; 
(c). against $N_\theta$  with different $r$; (d). against $N_\theta$ with different $k_1,k_2$. }\label{figquaderr2d}
\end{figure}

We conclude this section with an examination of the Legendre-Gauss quadrature applied to the regular component of the integrand, $\hat{f}_{\bk}(r,\hat{\bs{x}})$, namely $\mathcal{R}^\ast_{\bk}$, in both two- and three-dimensional settings.
We take $\rho_\delta(r)=r^{-d-0.7}$ and $h=1$. Clearly, the given kernel $\rho_\delta(r)$ is sufficiently smooth in $I_r$. Note that the exact value of $\mathcal{R}_{\bk}$ is unknown; therefore, we adopt a numerical approximation with $N_r = N_\theta = 500$ in 2D and $N_r = N_{\theta_1} = N_{\theta_2} = 500$ in 3D as the reference solution.
In Figure \ref{figquaderr2d}, we plot the errors of Legendre-Gauss quadrature against various $N_r$ and $N_\theta$ in 2D and 3D, respectively.  Here, we take $I_r=(1,2),$ $(1,3),$ $(1,4)$ and $(1,5).$
Figure \ref{figquaderr2d} indicate the algebraic order of convergence for different $k_1,k_2,k_3$, which seem better than $O(N^{-3}_r)$ and $O(N^{-3}_\theta)$. 
Indeed, we only need a small number of Legendre-Gauss nodes to get high accuracy as we can take full advantage of the small multiplier $h^3$.

\begin{figure}[!ht] 
\subfigure[fixed $\mathcal{R}_{1,1,1}^\ast$]{
\begin{minipage}[t]{0.43\textwidth}
\centering
\rotatebox[origin=cc]{-0}{\includegraphics[width=1\textwidth,height=0.7\textwidth]{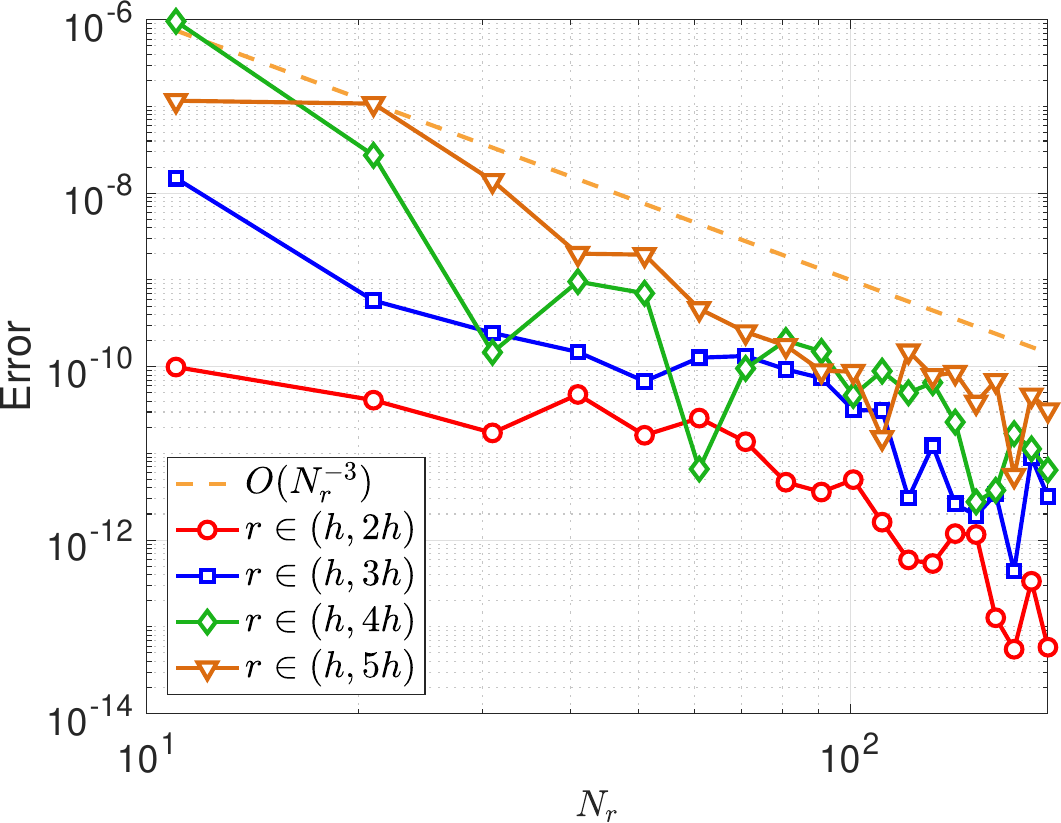}}
\end{minipage}}
\subfigure[fixed $r\in(h,5h)$]{
\begin{minipage}[t]{0.43\textwidth}
\centering
\rotatebox[origin=cc]{-0}{\includegraphics[width=1\textwidth,height=0.7\textwidth]{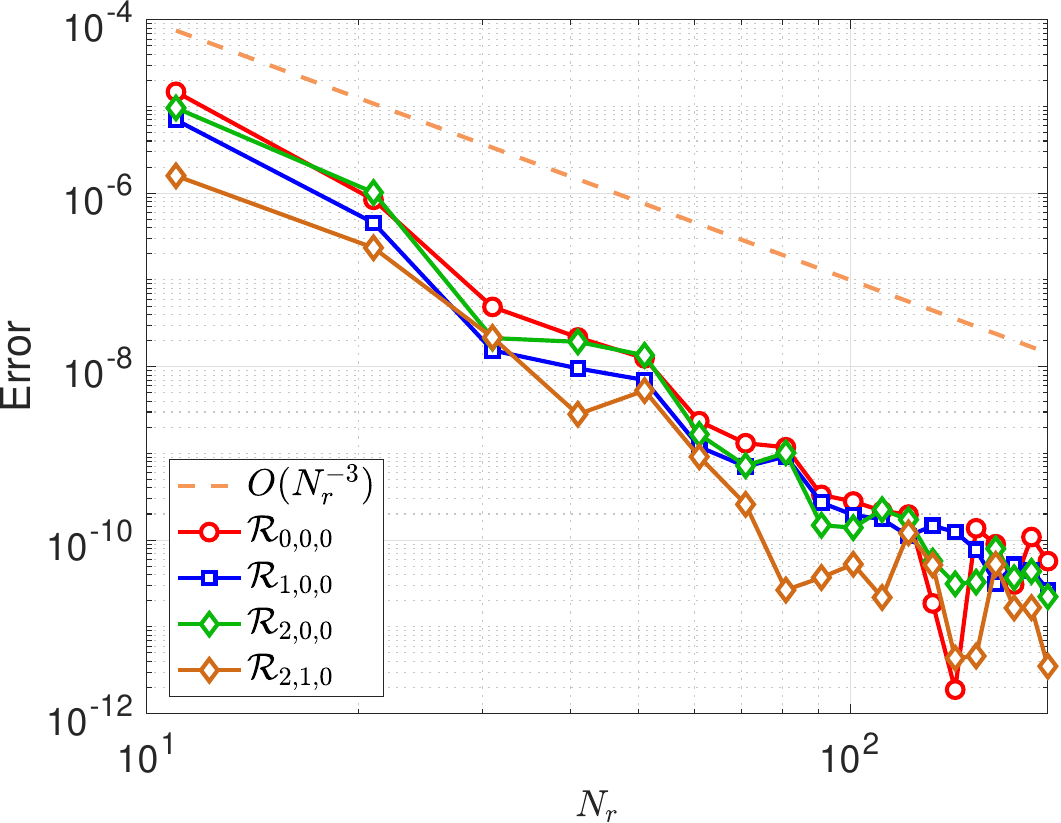}}
\end{minipage}}\hspace{-28pt}
\subfigure[fixed $\mathcal{R}_{1,1,1}^\ast$]{
\begin{minipage}[t]{0.43\textwidth}
\centering
\rotatebox[origin=cc]{-0}{\includegraphics[width=1\textwidth,height=0.7\textwidth]{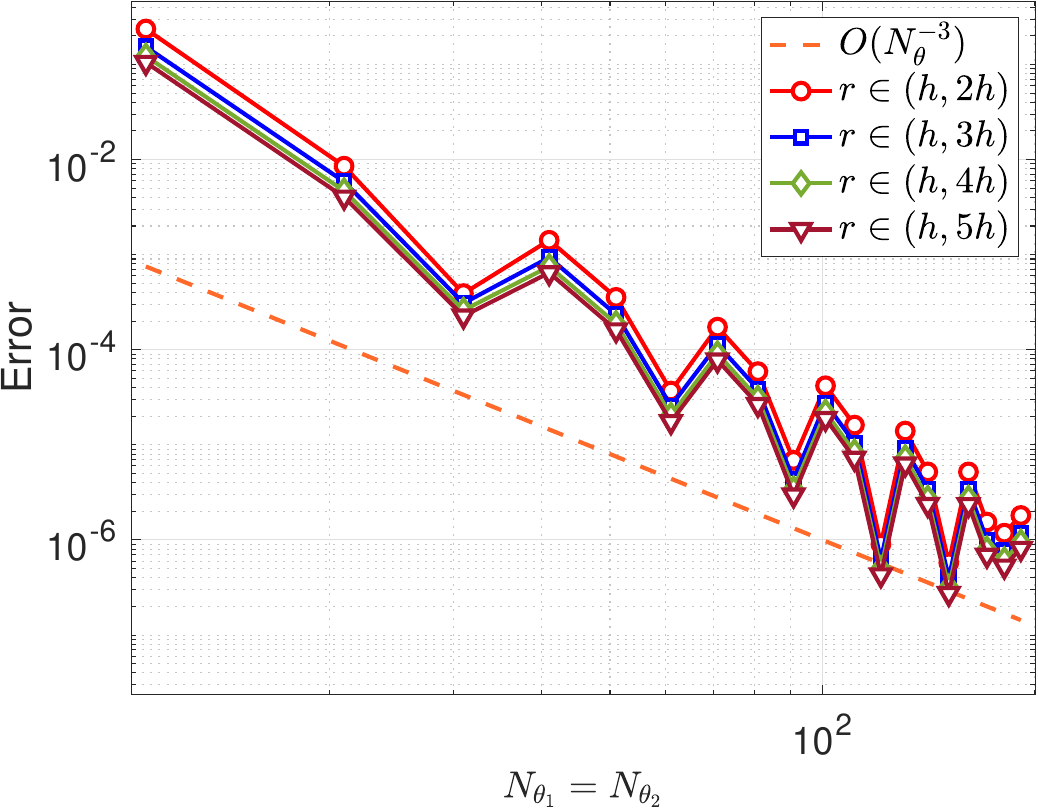}}
\end{minipage}}
\subfigure[fixed $r\in(h,5h)$]{
\begin{minipage}[t]{0.43\textwidth}
\centering
\rotatebox[origin=cc]{-0}{\includegraphics[width=1\textwidth,height=0.7\textwidth]{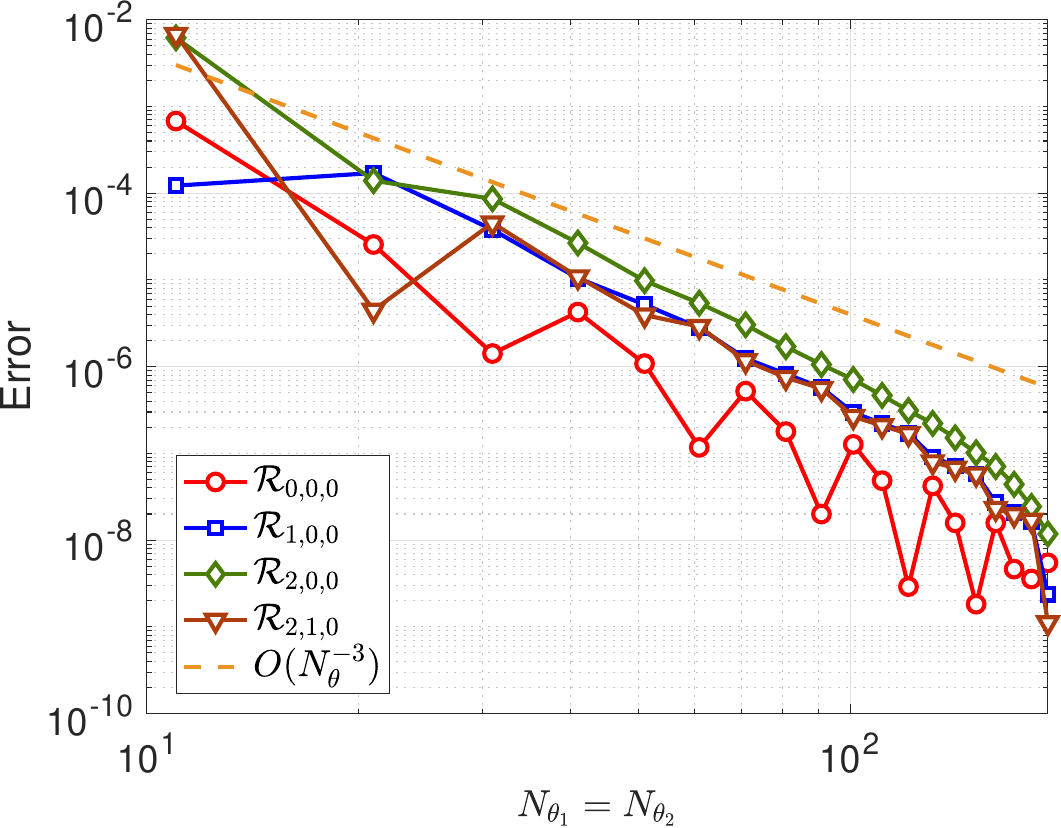}}
\end{minipage}}
\caption
{\small Errors of Legendre-Gauss quadrature for $\mathcal{R}^\ast_{k_1,k_2,k_3}$ in three dimensions. (a). against $N_r$ with different $r$; (b). against $N_r$  with different $k_1,k_2,k_3$; 
(c). against $N_\theta$  with different $r$; (d). against $N_\theta$ with different $k_1,k_2,k_3$. }\label{figquaderr3d}
\end{figure}

\medskip

\section{Numerical results and concluding remarks}\label{sec5}
In this section, we consider several examples of PDEs with nonlocal operator and show the accuracy and efficiency of the algorithm for computing the FEM stiffness matrix. We conduct convergence tests employing both two- and three-dimensional contrived solutions, with both given exact solution and right hand source function.

\subsection{Accuracy test  in 2D and 3D} 
 We now present some numerical results and begin by testing the accuracy of the algorithm for equation \eqref{uvshx}. To this end, we consider an exact solution given by  
$u(\bx) = \mathrm{e}^{-\lambda^2|\bs x|^2},$  
and choose the kernel  $\rho_\delta(\bx) = \frac{3d}{(d-1)\pi\delta^3|\bx|^{d-1}}, \;\; d = 2, 3,$  
which satisfies condition \eqref{ker2}.  When $d = 2$, then $\rho_\delta(\bx) = \frac{6}{\pi\delta^3|\bx|}$ substituting this kernel function into the nonlocal operator defined in \eqref{NonLOper} yields
\begin{equation}\label{benchmark2d}\begin{split}
&\mathcal{L}_\delta\big\{ \e^{-\lambda^2|\bs x|^2} \big\}=\frac{3}{\pi\delta^3}\int_{\mathbb{B}^2_\delta} \big(\e^{-\lambda^2|\bs x+\bss|^2}-\e^{-\lambda^2|\bs x|^2}\big)|\bss|^{-1}\, \d \bss
\\&=\frac{3}{\pi\delta^3}\int^{2\pi}_0\int_0^\delta \big(\e^{-\lambda^2(x_1+r\cos\theta)^2-\lambda^2(x_2+r\sin\theta)^2}-\e^{-\lambda^2(x_1^2+x_2^2)}\big)\, \d r\,\d \theta
\\&=\frac{3\e^{-\lambda^2(x_1^2+x_2^2)}}{2\pi\delta^3}\int^{2\pi}_0 \big(2\lambda\delta+E(x_1,x_2,\theta)\e^{-\lambda^2(x_1\cos\theta +x_2\sin\theta )^2}\big)\,\d \theta
=f_\lambda (\bs x),
\end{split}\end{equation}
where $E(x_1,x_2,\theta)=\sqrt{\pi}{\rm erf}(\lambda(x_1\cos\theta +x_2\sin\theta))-\sqrt{\pi}{\rm erf}(\lambda(\delta+x_1\cos\theta +x_2\sin\theta)),$ with ${\rm erf}(\cdot)$ denote the Gauss error function.  
\begin{figure}[!ht]\hspace{-24pt}
\begin{minipage}[t]{0.43\textwidth}
\centering
\rotatebox[origin=cc]{-0}{\includegraphics[width=1.0\textwidth,height=0.75\textwidth]{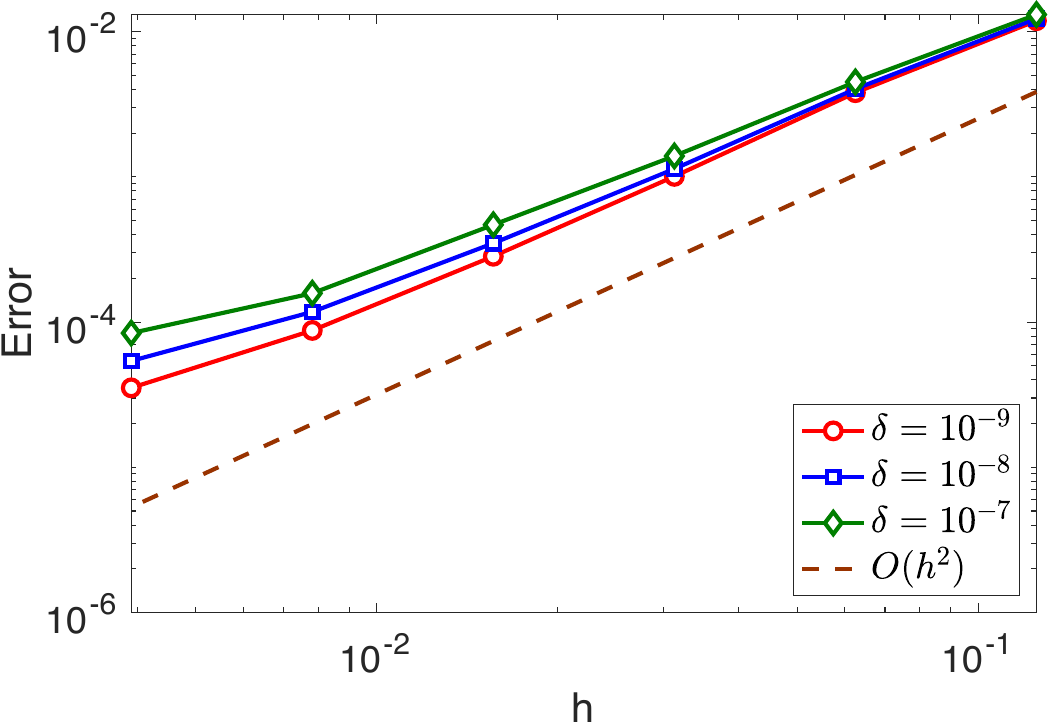}}
\end{minipage}\hspace{12pt}
\begin{minipage}[t]{0.43\textwidth}
\centering
\rotatebox[origin=cc]{-0}{\includegraphics[width=1.0\textwidth,height=0.75\textwidth]{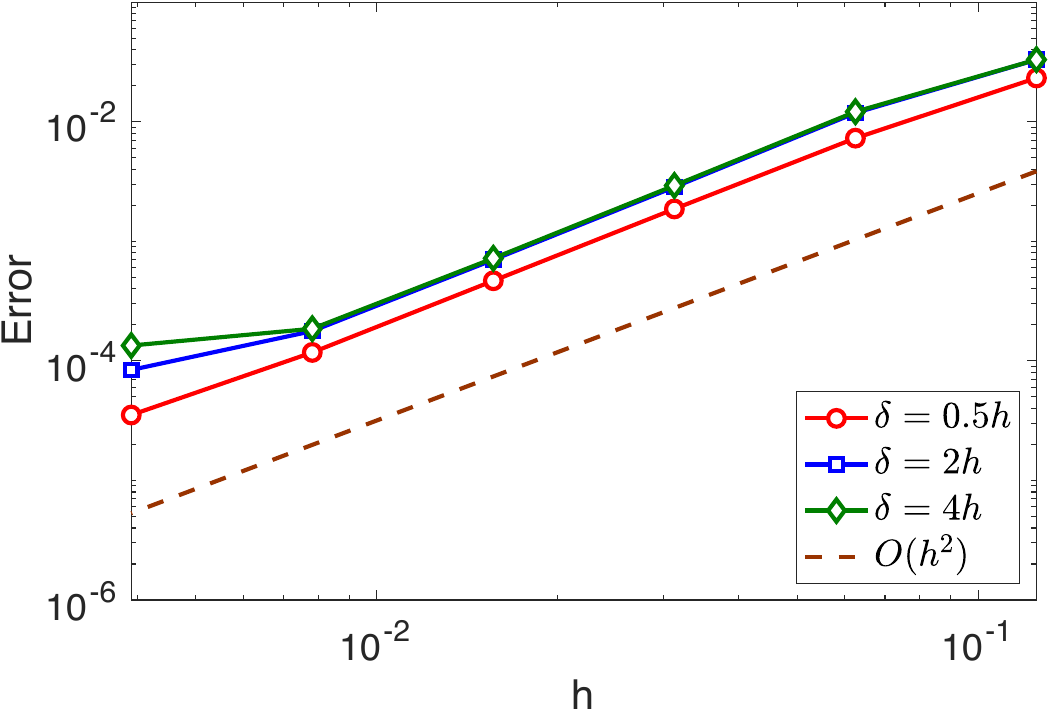}}
\end{minipage}
\vskip -5pt
\caption
{\small  Errors and convergence order of FEM approximation to \eqref{1DNon} with an ``exact" solution given in \eqref{benchmark2d}.
  Left:  Fixed $\delta$. Right:  Fixed $\delta=\nu h$ with various $\nu$.} \label{figerr2d}
\end{figure} 

In the above, the one-dimensional integrals for $f_\lambda(\bs x)$ can be evaluated with high accuracy using Legendre-Gauss quadrature, even with a small number of quadrature nodes. This allows us to select a relatively large parameter $\lambda > 0$, such that the function $u_\lambda(\bs x) := \mathrm{e}^{-\lambda^2|\bs x|^2} \approx 0$ for $\bs x \in \Omega_\delta$, while both $u_\lambda$ and $f_\lambda$ remain sufficiently smooth in $\Omega \cup \Omega_\delta$. Consequently, we expect the finite element approximation to achieve optimal second-order convergence for any $\delta$, provided that the stiffness matrix is computed with sufficient accuracy. In this test, we set $\lambda = 12$. As shown in Figure~\ref{figerr2d}, the results confirm that optimal second-order convergence is attained for $d=2$, both for various fixed values of $\delta$ and for different fixed radius $\nu = \delta / h$.

When $d=3$, constructing an analytical solution as in \eqref{benchmark2d} becomes challenging, primarily because it involves a double integral; when combined with the variational formulation, it ultimately requires evaluating a five-dimensional integral, which significantly increases computational complexity. Therefore, we choose a simple right-hand side $f(\bx) = 1$ and the kernel function $\rho_\delta(\bx)=\frac{9}{4\pi\delta^3|\bx|^2}$. Since the exact solution is unknown even for a constant kernel function, we adopt the numerical solution computed on a sufficiently fine mesh as the reference solution.  In Figure\,\ref{figerr3d}, we plot the discrete $L^2$-errors on a log-log scale against $h$ for various small values of $\delta$. The results clearly demonstrate second-order convergence for both regimes: $\delta <h$ and $\delta\ge h$.

\begin{figure}[!ht]\hspace{-24pt}
\begin{minipage}[t]{0.43\textwidth}
\centering
\rotatebox[origin=cc]{-0}{\includegraphics[width=1.0\textwidth,height=0.75\textwidth]{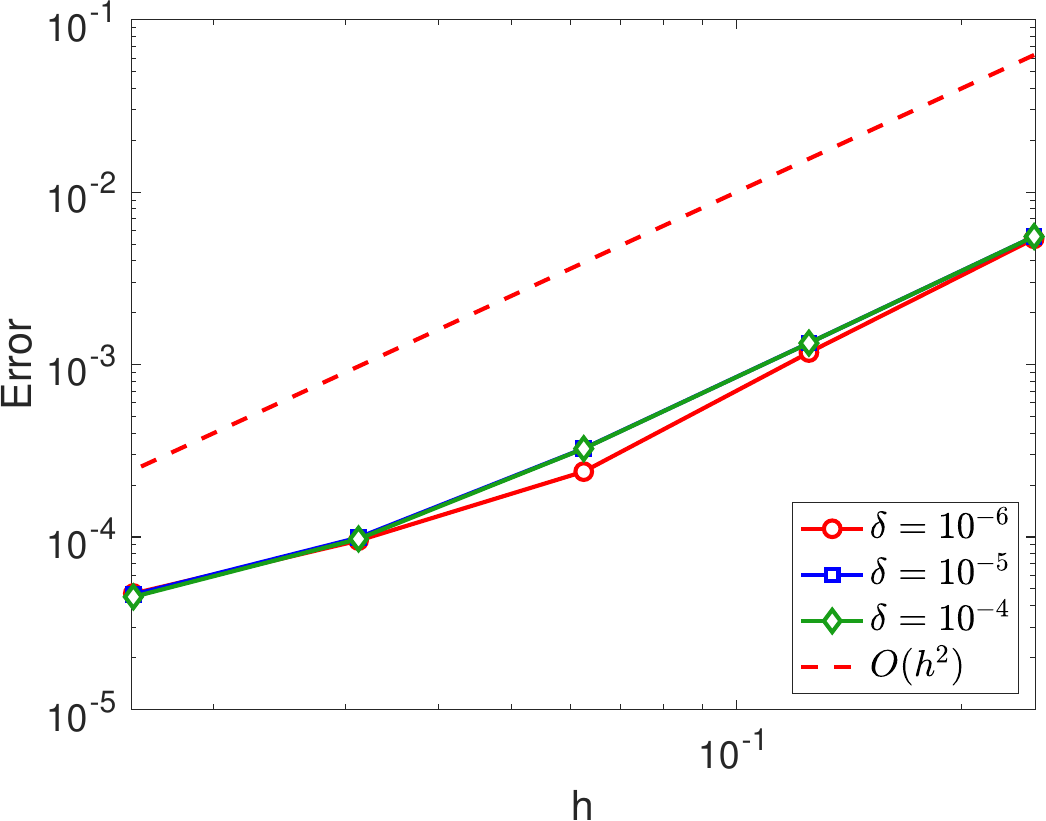}}
\end{minipage}\hspace{12pt}
\begin{minipage}[t]{0.43\textwidth}
\centering
\rotatebox[origin=cc]{-0}{\includegraphics[width=1.0\textwidth,height=0.75\textwidth]{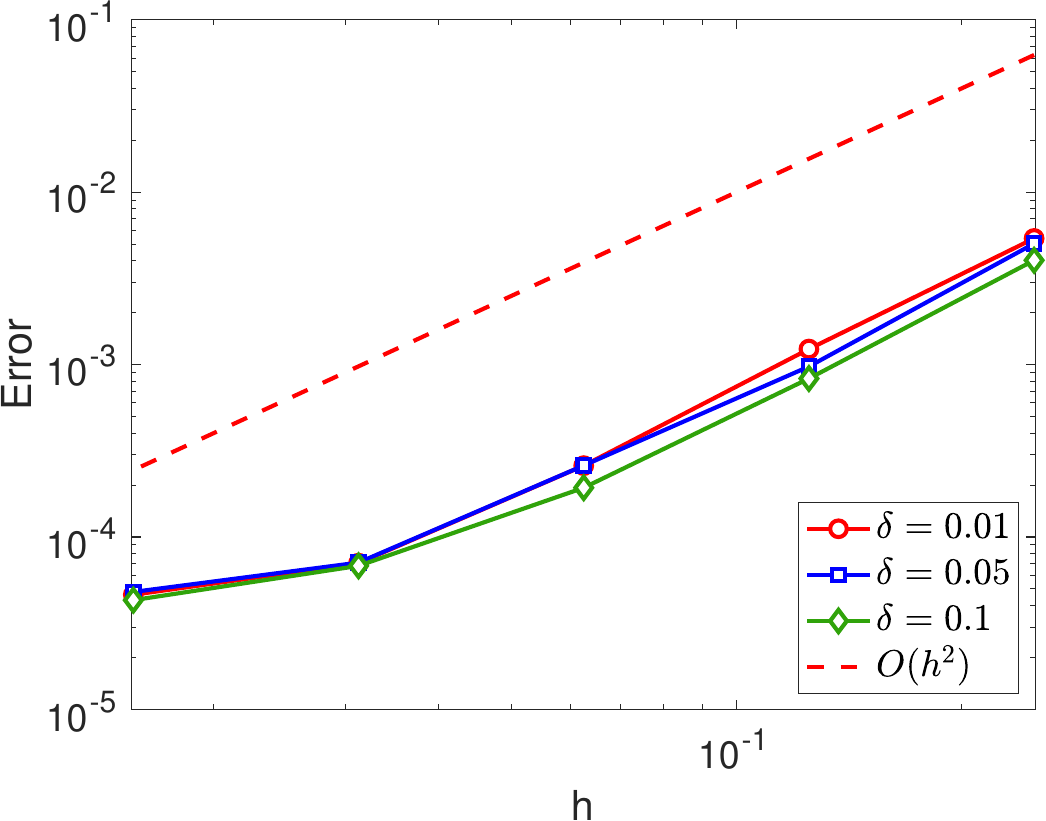}}
\end{minipage}
\vskip -5pt
\caption
{\small  Errors and convergence order of FEM approximation to \eqref{1DNon} with given source function $f(\bx)=1$.
  Left:  Fixed small $\delta$. Right:  Fixed big $\delta$.} \label{figerr3d}
\end{figure}

\subsection{Numericial results for hypersingular kernel}
Next, we consider the source problem with $f(\bx) = 1$ and a hypersingular kernel given by $\rho_\delta(\bss) = c^\alpha_\delta |\bss|^{-d -\alpha}$, where $s \in (0,1)$. In this setting, the exact solution is not available in closed form. It is known that the nonlocal operator $\mathcal{L}_\delta$ converges to the fractional Laplacian $(-\Delta)^{\frac{\alpha}{2}}$ as $\delta \to \infty$. As established in \cite[(7.12)]{grubb2015fractional}, the solution to the fractional Laplacian equation $(-\Delta)^{\frac{\alpha}{2}}u=f$ exhibits a singular behavior near the boundary $\partial \Omega$, and admits the form $u(\bx) = \mathrm{dist}(\bx, \partial \Omega)^{\frac{\alpha}{2}} + v(\bx),$ where $\mathrm{dist}(\bx, \partial \Omega)$ denotes the distance from $\bx \in \Omega$ to the boundary $\partial \Omega$, and $v$ is a smooth function. On the other hand, as $\delta \rightarrow 0$, the nonlocal operator recovers the classical local operator uniformly for all $\alpha$, which is consistent with the analysis presented earlier in \eqref{a10_2d}. Our focus is to investigate the discrepancies between the local and nonlocal models with respect to the interaction radius $\delta\rightarrow 0$ in this setting.  
In Figure~\ref{figerr2dsingular}, we plot the discrete $L^2$-errors in log-log scale against $h$ for various values of small $\delta\rightarrow 0$ under $\alpha = 1.3, 1.5, 1.7$. It can be observed that for different values of $\alpha$ and $\delta$, the $L^2$-errors exhibit an approximate convergence rate of $O(h^2)$. Moreover, we plot the discrete $L^2$-errors in log-log scale against $h$ for fixed ratios $\delta/h = 0.5, 2, 4$ with different $\alpha$, see Figure~\ref{figerr2dsingularradius}. The numerical results indicate that the errors exhibit an approximate convergence rate of $O(h)$, which is consistent with the one-dimensional case as in \cite{chen2025fem}.

\begin{figure}[!ht]\hspace{-6pt} 
\subfigure[$\alpha=1.3$]{
\begin{minipage}[t]{0.32\textwidth}
\centering
\rotatebox[origin=cc]{-0}{\includegraphics[width=0.95\textwidth,height=0.92\textwidth]{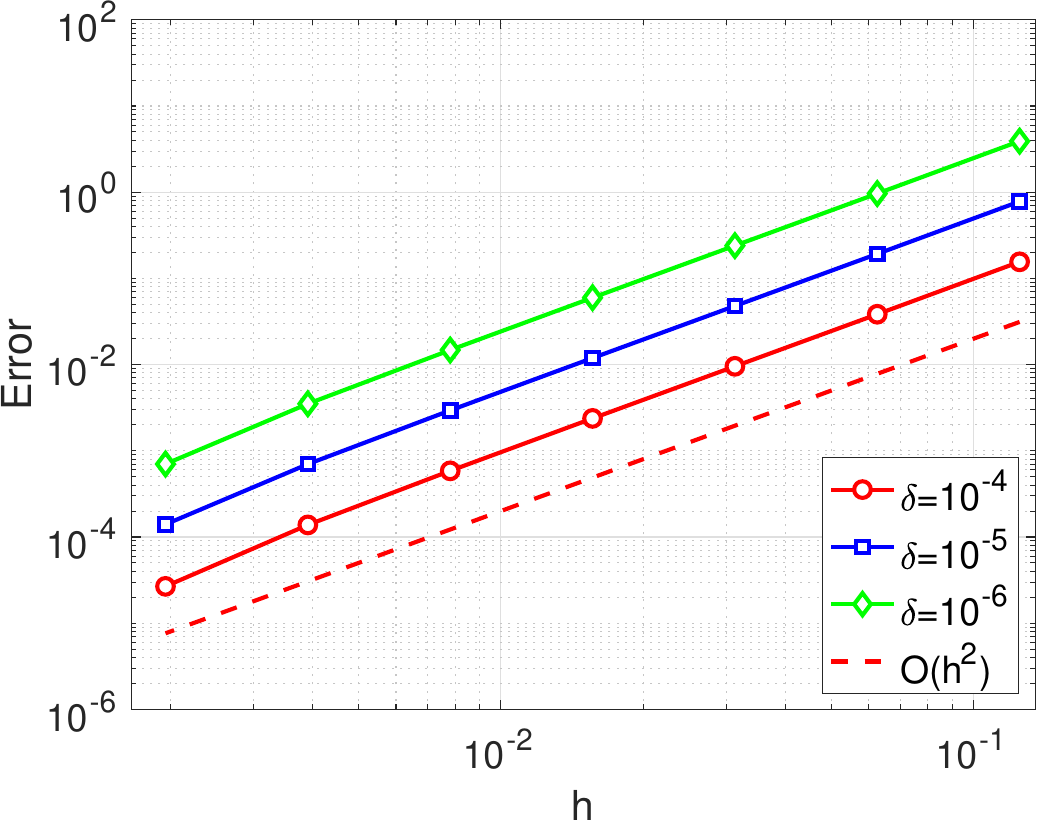}}
\end{minipage}}\hspace{-2pt}
\subfigure[$\alpha=1.5$]{
\begin{minipage}[t]{0.32\textwidth}
\centering
\rotatebox[origin=cc]{-0}{\includegraphics[width=0.95\textwidth,height=0.92\textwidth]{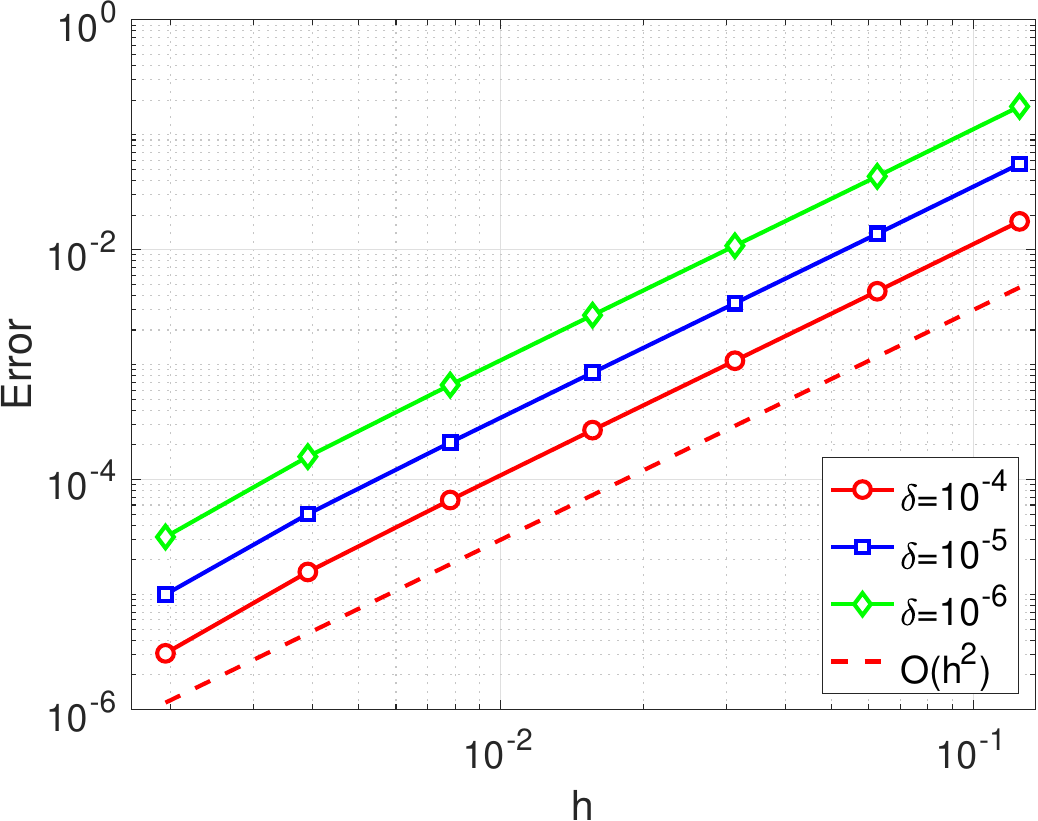}}
\end{minipage}}\hspace{-2pt}
\subfigure[$\alpha=1.7$]{
\begin{minipage}[t]{0.32\textwidth}
\centering
\rotatebox[origin=cc]{-0}{\includegraphics[width=0.95\textwidth,height=0.92\textwidth]{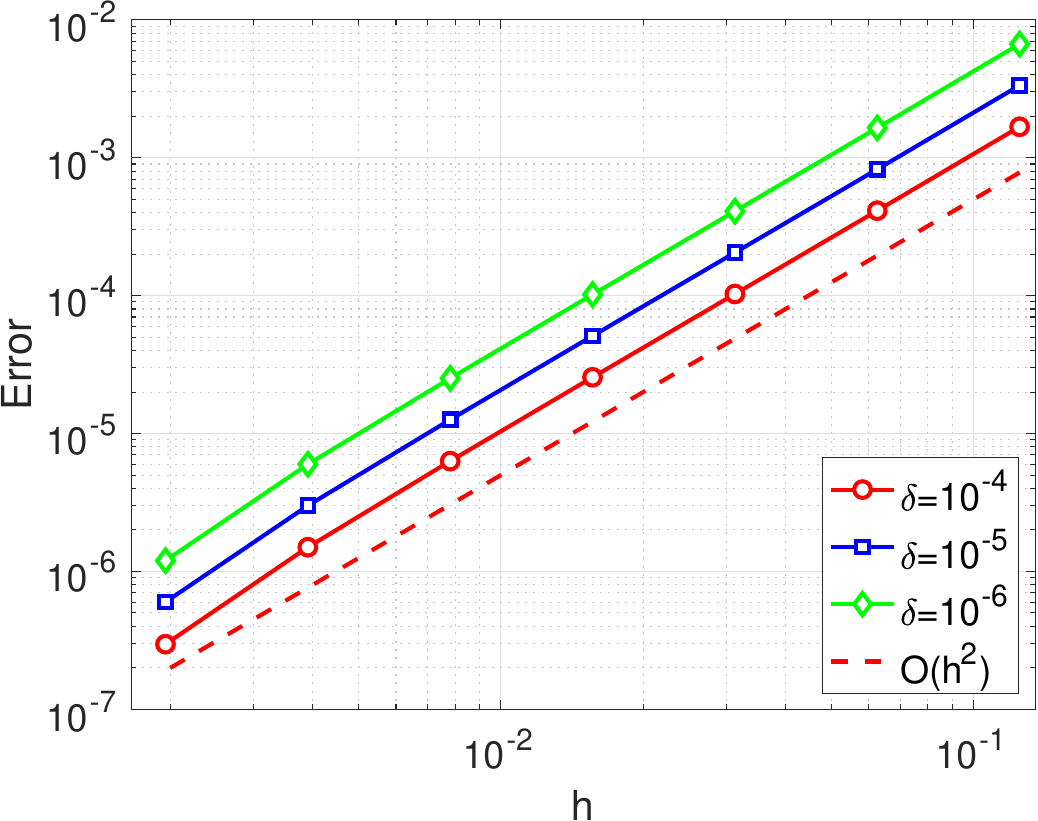}}
\end{minipage}} 
\caption
{\small The $L^2$-error and convergence order of FEM with $f (x)=1$ and fixed $\delta$ in two dimensions.  }\label{figerr2dsingular}
\end{figure}

\begin{figure}[!ht]\hspace{-6pt} 
\subfigure[$\delta=0.5h$]{
\begin{minipage}[t]{0.32\textwidth}
\centering
\rotatebox[origin=cc]{-0}{\includegraphics[width=0.95\textwidth,height=0.92\textwidth]{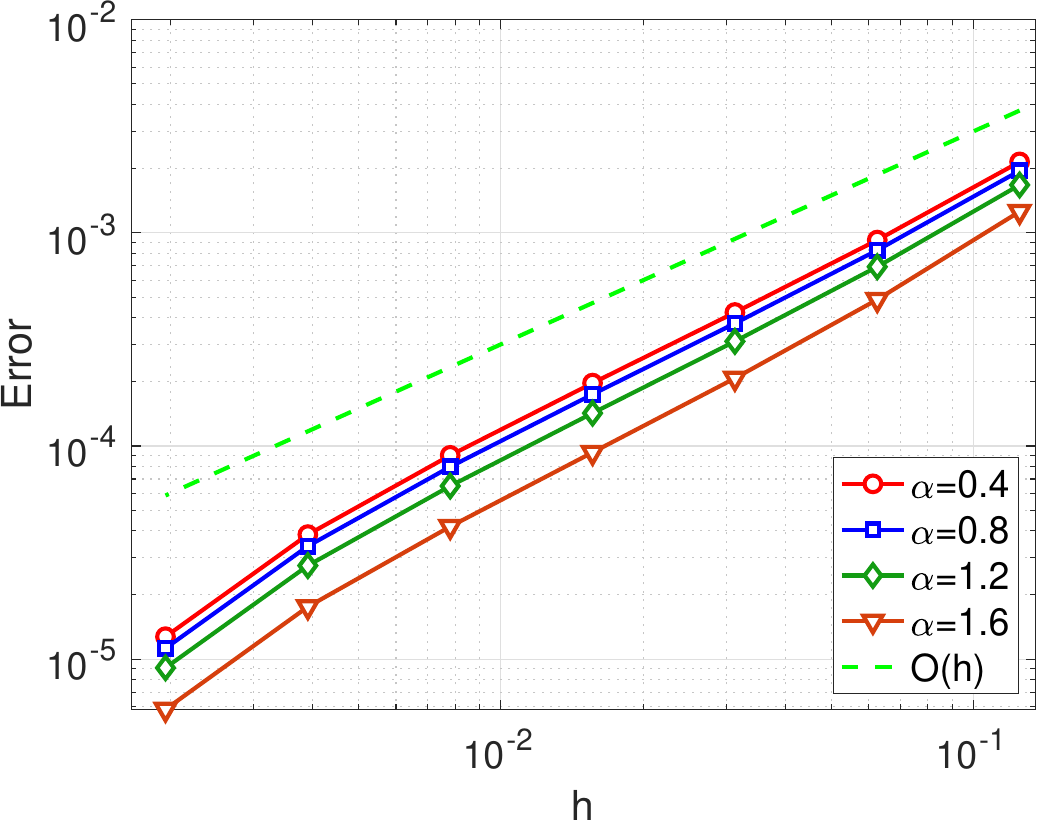}}
\end{minipage}}\hspace{-2pt}
\subfigure[$\delta=2h$]{
\begin{minipage}[t]{0.32\textwidth}
\centering
\rotatebox[origin=cc]{-0}{\includegraphics[width=0.95\textwidth,height=0.92\textwidth]{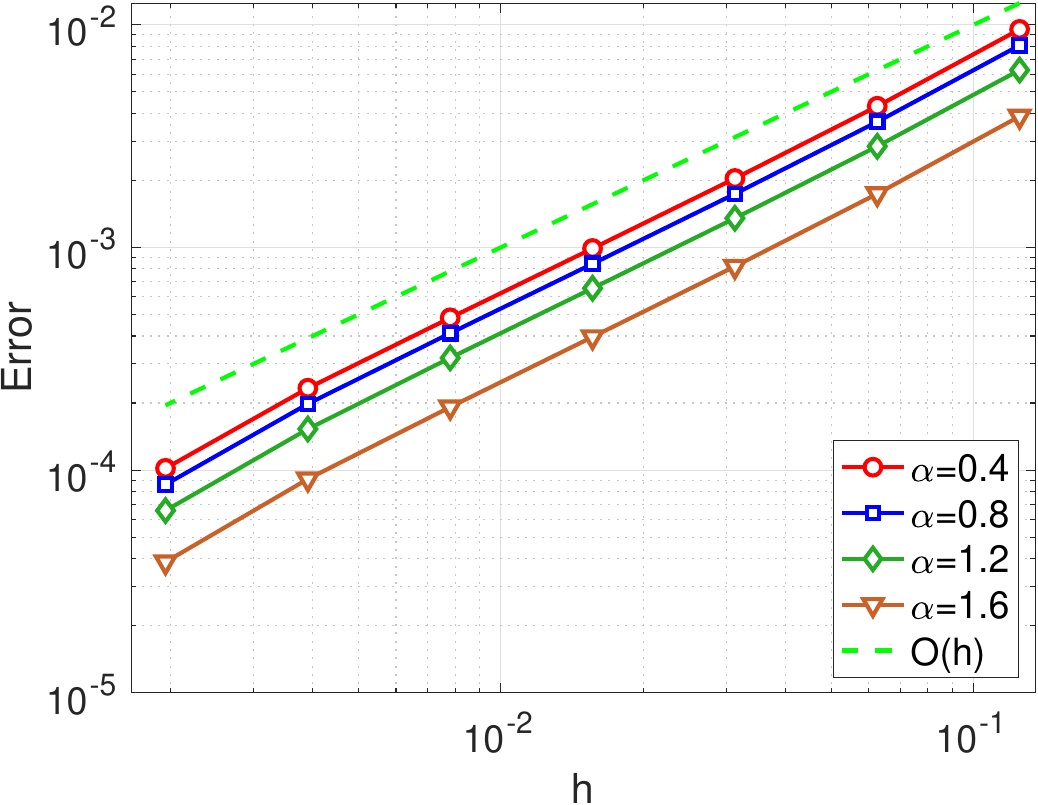}}
\end{minipage}}\hspace{-2pt}
\subfigure[$\delta=4h$]{
\begin{minipage}[t]{0.32\textwidth}
\centering
\rotatebox[origin=cc]{-0}{\includegraphics[width=0.95\textwidth,height=0.92\textwidth]{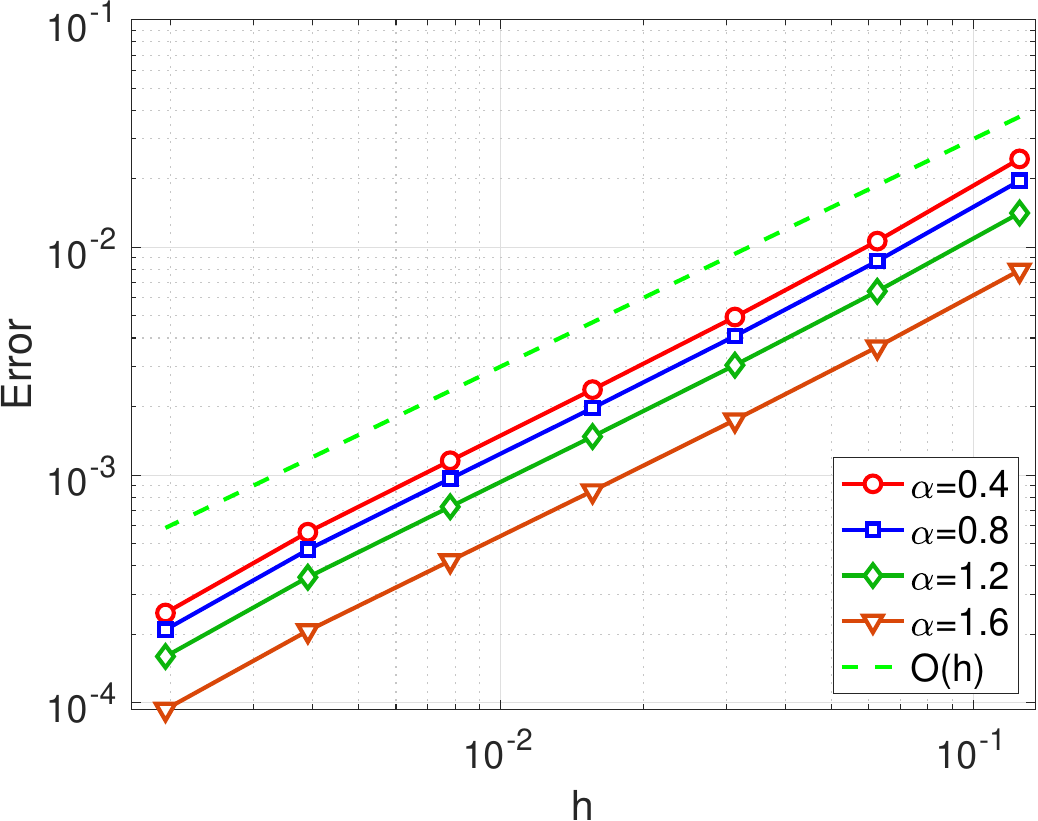}}
\end{minipage}} 
\caption
{\small The $L^2$-error and convergence order of FEM with $f (x)=1$ and fixed radius $\delta/h$ in two dimensions.  }\label{figerr2dsingularradius}
\end{figure}

 \subsection{Concluding remarks}\label{con}
In this paper, we develop a novel finite element method for solving nonlocal Laplacian equations on bounded domains in multiple dimensions. By exploiting the convolution property of B-spline basis functions, we show that the entries of the FEM stiffness matrix, which correspond to the nonlocal Laplacian with nonlocal homogeneous boundary conditions on rectangular meshes, can be explicitly represented as $d$-dimensional integrals over finite intervals. This formulation enables the accurate and flexible computation of matrix entries to any prescribed level of accuracy. As a byproduct, for the case $\delta \ge h$, we derive explicit expressions for the stiffness matrix entries corresponding to hypersingular kernels. Extensive numerical results are presented to verify the optimal accuracy and computational efficiency of the proposed method. In addition, by combining it with the grid-overlay technique proposed in \cite{huang2024,huang2025}, the method can be readily extended to problems defined on complex geometries.

\bigskip 
\noindent{\bf Acknowledgment:}\,  The authors would like to express their gratitude to Professor Weizhang Huang from University of Kansas and Dr. Jinye Shen from Southwestern University of Finance and Economics  for  insightful discussions, in particular, on exploring Go-FEM for nonlocal problems on  unstructured meshes.

\begin{appendix}
 
\section{The explicit expressions of the integrand in 2D} \label{appendixA}
 \renewcommand{\theequation}{A.\arabic{equation}}
For the convenience of computation, we provide the explicit expressions of $\hat{f}_{k_1,k_2}(r,\theta)$ with $k_1\ge k_2$, for any $(r,\theta)\in(0,1]\times(0,\pi)$  as below
 \begin{equation*}\label{f01} 
\begin{split} 
&\hat{f}_{01}(r,\theta)=-\frac{r^2}3\Big(2-\frac43r\sin\theta-\big(3+3r^2-\frac13r(4+6r^2)\sin\theta\big)\cos^2\theta
\\&\;\;+\frac{r}2\big(1-3r^2+2r^3\sin\theta\big)|\!\cos^3\theta|+r^2\big(3-2r\sin\theta\big)\cos^4\theta+\frac{r^3}2(2-3r\sin\theta)|\!\cos^5\theta|\Big), 
\\&\hat{f}_{20}(r,\theta)=-\frac{r^3}{36}\big(4-6r^2\cos^2\theta+3r^3|\!\cos\theta|\big)\sin^3\theta,
\end{split}\end{equation*}
and
\begin{equation*}\label{f11} 
\begin{split}
\hat{f}_{11}(r,\theta)=-\frac{r^2}6\begin{cases}1-\frac23r\sin\theta-(r^2-3)|\cos\theta|\sin\theta
\\[3pt]\;\;+\frac{r}3\big(9r+2(1-3r^2)\sin\theta\big)\cos^2\theta-\frac{r}3\big(2+6r^2-5r^3\sin\theta\big)|\cos^3\theta|
\\[3pt]\;\;-r^2\big(3-2r\sin\theta\big)\cos^4\theta+\frac{r^3}3(6-5r\sin\theta)|\!\cos^5\theta|,\;\;\;  \theta\in(0,\frac{\pi}2),
\\[3pt]1-\frac23r\sin\theta|+(3-r^2)\cos\theta|\sin\theta
\\[3pt]\;\;+r\big(3r+\frac13(6-2r^2)\sin\theta\big)\cos^2\theta+\frac{r}3\big(2+6r^2-3r^3\sin\theta\big)|\cos^3\theta|
\\[3pt]\;\;+r^2\big(2r\sin\theta-3\big)\cos^4\theta+r^3(r\sin\theta-2)|\cos^5\theta|, \;\;\; \theta\in(\frac{\pi}2,\pi),
\end{cases}
\end{split}\end{equation*} 
\begin{equation*}\label{f12} 
\begin{split}
\hat{f}_{21}(r,\theta)=-\frac{r^3|\sin^3\theta|}{36} \begin{cases}\big(1+3r(|\cos\theta|+r\cos^2\theta-r^2|\cos^3\theta|)\big),\;\;\;&\theta\in(0,\frac{\pi}2),\\[3pt]
\big(1+r|\cos\theta|\big)^3,\;\;\;& \theta\in(\frac{\pi}2,\pi),
\end{cases}\end{split}\end{equation*}
\begin{equation*}\label{f22} 
\begin{split}
\hat{f}_{22}(r,\theta)=\begin{cases}-\frac{1}{36} r^6\sin^3\theta\cos^3\theta,\;\;\;& \theta\in(0,\frac{\pi}2),\\[3pt]
0,\;\;\; &\theta\in(\frac{\pi}2,\pi).
\end{cases}\end{split}\end{equation*}

\end{appendix}

\bibliography{refnonlocal}

\begin{thebibliography}{10}

\bibitem{andreu2010nonlocal}
F.~Andreu-Vaillo, J.~M. Maz\'{o}n, J.~D. Rossi, and J.~J. Toledo-Melero.
\newblock {\em Nonlocal diffusion problems}, volume 165 of {\em Mathematical
  Surveys and Monographs}.
\newblock American Mathematical Society, Providence, RI; Real Sociedad
  Matem\'{a}tica Espa\~{n}ola, Madrid, 2010.

\bibitem{askari2008peridynamics}
E.~Askari, F.~Bobaru, R.~Lehoucq, M.~Parks, S.~A. Silling, and O.~Weckner.
\newblock Peridynamics for multiscale materials modeling.
\newblock In {\em Journal of Physics: Conference Series}, volume 125, page
  012078. IOP Publishing, 2008.

\bibitem{buades2010image}
A.~Buades, B.~Coll, and J.~M. Morel.
\newblock Image denoising methods. {A} new nonlocal principle.
\newblock {\em SIAM Rev.}, 52(1):113--147, 2010.

\bibitem{chen2024}
G.~Chen, Y.~Ma, and J.~Zhang.
\newblock Efficient implementation of 3{D} {FEM} for nonlocal {P}oisson problem
  with different ball approximation strategies.
\newblock {\em Commun. Comput. Phys.}, 36(5):1378--1410, 2024.

\bibitem{chen2025fem}
H.~Chen, C.~Sheng, and L.-L. Wang.
\newblock {FEM} on nonuniform meshes for nonlocal {L}aplacian: semi-analytic
  implementation in one dimension.
\newblock {\em Calcolo}, 62(1):Paper No. 9, 26, 2025.

\bibitem{Chen2011Continuous}
X.~Chen and M.~Gunzburger.
\newblock Continuous and discontinuous finite element methods for a
  peridynamics model of mechanics.
\newblock {\em Comput. Methods Appl. Mech. Engrg.}, 200(9-12):1237--1250, 2011.

\bibitem{Chui1992introduction}
C.~K. Chui.
\newblock {\em {An Introduction to Wavelets}}.
\newblock Academic Press, Boston, 1992.

\bibitem{DElia2021}
M.~D'Elia, M.~Gunzburger, and C.~Vollmann.
\newblock A cookbook for approximating {E}uclidean balls and for quadrature
  rules in finite element methods for nonlocal problems.
\newblock {\em Math. Models Methods Appl. Sci.}, 31(8):1505--1567, 2021.

\bibitem{Du2020a}
N.~Du, X.~Guo, and H.~Wang.
\newblock A fast state-based peridynamic numerical model.
\newblock {\em Commun. Comput. Phys.}, 27(1):274--291, 2020.

\bibitem{Du2019Nonlocal}
Q.~Du.
\newblock {\em Nonlocal modeling, analysis, and computation}, volume~94 of {\em
  CBMS-NSF Regional Conference Series in Applied Mathematics}.
\newblock Society for Industrial and Applied Mathematics, Philadelphia, PA,
  2019.

\bibitem{Du2012Analysis}
Q.~Du, M.~Gunzburger, R.~B. Lehoucq, and K.~Zhou.
\newblock Analysis and approximation of nonlocal diffusion problems with volume
  constraints.
\newblock {\em SIAM Rev.}, 54(4):667--696, 2012.

\bibitem{Du2022on}
Q.~Du, H.~Xie, and X.~Yin.
\newblock On the convergence to local limit of nonlocal models with
  approximated interaction neighborhoods.
\newblock {\em SIAM J. Numer. Anal.}, 60(4):2046--2068, 2022.

\bibitem{Du2016Asymptotically}
Q.~Du and J.~Yang.
\newblock Asymptotically compatible {F}ourier spectral approximations of
  nonlocal {A}llen-{C}ahn equations.
\newblock {\em SIAM J. Numer. Anal.}, 54(3):1899--1919, 2016.

\bibitem{Du2017fast}
Q.~Du and J.~Yang.
\newblock Fast and accurate implementation of {F}ourier spectral approximations
  of nonlocal diffusion operators and its applications.
\newblock {\em J. Comput. Phys.}, 332:118--134, 2017.

\bibitem{du2011mathematical}
Q.~Du and K.~Zhou.
\newblock Mathematical analysis for the peridynamic nonlocal continuum theory.
\newblock {\em ESAIM Math. Model. Numer. Anal.}, 45(2):217--234, 2011.

\bibitem{Du2022Perfectly}
Y.~Du and J.~Zhang.
\newblock Perfectly matched layers for nonlocal {H}elmholtz equations {II}:
  multi-dimensional cases.
\newblock {\em J. Comput. Phys.}, 464:Paper No. 111192, 20, 2022.

\bibitem{Gilboa2007Nonlocal}
G.~Gilboa and S.~Osher.
\newblock Nonlocal linear image regularization and supervised segmentation.
\newblock {\em Multiscale Model. Simul.}, 6(2):595--630, 2007.

\bibitem{Gilboa2008Nonlocal}
G.~Gilboa and S.~Osher.
\newblock Nonlocal operators with applications to image processing.
\newblock {\em Multiscale Model. Simul.}, 7(3):1005--1028, 2008.

\bibitem{Glusa2023an}
C.~Glusa, M.~D'Elia, G.~Capodaglio, M.~Gunzburger, and P.~B. Bochev.
\newblock An asymptotically compatible coupling formulation for nonlocal
  interface problems with jumps.
\newblock {\em SIAM J. Sci. Comput.}, 45(3):A1359--A1384, 2023.

\bibitem{grubb2015fractional}
G.~Grubb.
\newblock Fractional {L}aplacians on domains, a development of
  {H}{\"o}rmander's theory of $\mu$-transmission pseudodifferential operators.
\newblock {\em Adv. Math.}, 268:478--528, 2015.

\bibitem{gunzburger2010nonlocal}
M.~Gunzburger and R.~B. Lehoucq.
\newblock A nonlocal vector calculus with application to nonlocal boundary
  value problems.
\newblock {\em Multiscale Model. Simul.}, 8(5):1581--1598, 2010.

\bibitem{huang2024}
W.~Huang and J.~Shen.
\newblock A grid-overlay finite difference method for the fractional
  {L}aplacian on arbitrary bounded domains.
\newblock {\em SIAM J. Sci. Comput.}, 46(2):A744--A769, 2024.

\bibitem{huang2025}
W.~Huang and J.~Shen.
\newblock A grid-overlay finite difference method for inhomogeneous {D}irichlet
  problems of the fractional {L}aplacian on arbitrary bounded domains.
\newblock {\em J. Sci. Comput.}, 102(2):Paper No. 50, 26, 2025.

\bibitem{ShenSH2025}
W.~Huang, J.~Shen, C.~Sheng, and L.-L. Wang.
\newblock An efficient grid-overlay finite element method for the nonlocal
  {L}aplacian on arbitrary bounded domains.
\newblock {\em In preparation}, 2025.

\bibitem{kao2010random}
C.-Y. Kao, Y.~Lou, and W.~Shen.
\newblock Random dispersal vs. non-local dispersal.
\newblock {\em Discrete Contin. Dyn. Syst}, 26(2):551--596, 2010.

\bibitem{kilic2010coupling}
B.~Kilic and E.~Madenci.
\newblock Coupling of peridynamic theory and the finite element method.
\newblock {\em J. Mech. Mater. Struct.}, 5(5):707--733, 2010.

\bibitem{klafter2005anomalous}
J.~Klafter and I.~M. Sokolov.
\newblock Anomalous diffusion spreads its wings.
\newblock {\em Phys. World}, 18(8):29, 2005.

\bibitem{leng2022a}
Y.~Leng, X.~Tian, L.~Demkowicz, H.~Gomez, and J.~T. Foster.
\newblock A {P}etrov-{G}alerkin method for nonlocal convection-dominated
  diffusion problems.
\newblock {\em J. Comput. Phys.}, 452:Paper No. 110919, 24, 2022.

\bibitem{Leng2021Asymptotically}
Y.~Leng, X.~Tian, N.~Trask, and J.~T. Foster.
\newblock Asymptotically compatible reproducing kernel collocation and meshfree
  integration for nonlocal diffusion.
\newblock {\em SIAM J. Numer. Anal.}, 59(1):88--118, 2021.

\bibitem{metzler2000random}
R.~Metzler and J.~Klafter.
\newblock The random walk's guide to anomalous diffusion: a fractional dynamics
  approach.
\newblock {\em Phys. Rep.}, 339(1):1--77, 2000.

\bibitem{Milovanovi2010}
G.~V. Milovanovi\'{c} and Z.~Udovi\v{c}i\'{c}.
\newblock Calculation of coefficients of a cardinal {B}-spline.
\newblock {\em Appl. Math. Lett.}, 23(11):1346--1350, 2010.

\bibitem{oterkus2012peridynamic}
E.~Oterkus and E.~Madenci.
\newblock Peridynamic analysis of fiber-reinforced composite materials.
\newblock {\em J. Mech. Mater. Struct.}, 7(1):45--84, 2012.

\bibitem{Pasetto2022}
M.~Pasetto, Z.~Shen, M.~D'Elia, X.~Tian, N.~Trask, and D.~Kamensky.
\newblock Efficient optimization-based quadrature for variational
  discretization of nonlocal problems.
\newblock {\em Comput. Methods Appl. Mech. Engrg.}, 396:Paper No. 115104, 37,
  2022.

\bibitem{ShenTangWang2011}
J.~Shen, T.~Tang, and L.~Wang.
\newblock {\em {Spectral Methods: Algorithms, Analysis and Applications}},
  volume~41 of {\em Series in Computational Mathematics}.
\newblock Springer-Verlag, Berlin, Heidelberg, 2011.

\bibitem{sheng2021fast}
C.~Sheng, L.-L. Wang, H.~Chen, and H.~Li.
\newblock Fast implementation of {FEM} for integral fractional {L}aplacian on
  rectangular meshes.
\newblock {\em Commun. Comput. Phys.}, 36(3):673--710, 2024.

\bibitem{Silling2000Reformulation}
S.~A. Silling.
\newblock Reformulation of elasticity theory for discontinuities and long-range
  forces.
\newblock {\em J. Mech. Phys. Solids}, 48(1):175--209, 2000.

\bibitem{silling2010peridynamic}
S.~A. Silling and R.~B. Lehoucq.
\newblock Peridynamic theory of solid mechanics.
\newblock {\em Adv. Appl. Mech.}, 44:73--168, 2010.

\bibitem{silling2010crack}
S.~A. Silling, O.~Weckner, E.~Askari, and F.~Bobaru.
\newblock Crack nucleation in a peridynamic solid.
\newblock {\em Int. J. Fracture}, 162:219--227, 2010.

\bibitem{Slevinsky2018}
R.~M. Slevinsky, H.~Montanelli, and Q.~Du.
\newblock A spectral method for nonlocal diffusion operators on the sphere.
\newblock {\em J. Comput. Phys.}, 372:893--911, 2018.

\bibitem{TianJuDu2015}
H.~Tian, L.~Ju, and Q.~Du.
\newblock Nonlocal convection-diffusion problems and finite element
  approximations.
\newblock {\em Comput. Methods Appl. Mech. Engrg.}, 289:60--78, 2015.

\bibitem{TianJuDu2017}
H.~Tian, L.~Ju, and Q.~Du.
\newblock A conservative nonlocal convection-diffusion model and asymptotically
  compatible finite difference discretization.
\newblock {\em Comput. Methods Appl. Mech. Engrg.}, 320:46--67, 2017.

\bibitem{Tian2013analysis}
X.~Tian and Q.~Du.
\newblock Analysis and comparison of different approximations to nonlocal
  diffusion and linear peridynamic equations.
\newblock {\em SIAM J. Numer. Anal.}, 51(6):3458--3482, 2013.

\bibitem{Tian2014Asymptotically}
X.~Tian and Q.~Du.
\newblock Asymptotically compatible schemes and applications to robust
  discretization of nonlocal models.
\newblock {\em SIAM J. Numer. Anal.}, 52(4):1641--1665, 2014.

\bibitem{Vollmann2019}
C.~Vollmann and V.~Schulz.
\newblock Exploiting multilevel {T}oeplitz structures in high dimensional
  nonlocal diffusion.
\newblock {\em Comput. Vis. Sci.}, 20(1-2):29--46, 2019.

\bibitem{Wang2012}
H.~Wang and H.~Tian.
\newblock A fast {G}alerkin method with efficient matrix assembly and storage
  for a peridynamic model.
\newblock {\em J. Comput. Phys.}, 231(23):7730--7738, 2012.

\bibitem{Wang2014a}
H.~Wang and H.~Tian.
\newblock A fast and faithful collocation method with efficient matrix assembly
  for a two-dimensional nonlocal diffusion model.
\newblock {\em Comput. Methods Appl. Mech. Engrg.}, 273:19--36, 2014.

\bibitem{Wang2024Stability}
J.~Wang, J.~Zh. Yang, and J.~Zhang.
\newblock Stability and convergence analysis of high-order numerical schemes
  with {D}t{N}-type absorbing boundary conditions for nonlocal wave equations.
\newblock {\em IMA J. Numer. Anal.}, 44(1):604--632, 2024.

\bibitem{Yang2022On}
J.~Z. Yang, X.~Yin, and J.~Zhang.
\newblock On uniform second-order nonlocal approximations to diffusion and
  subdiffusion equations with nonlocal effect parameter.
\newblock {\em Commun. Math. Sci.}, 20(2):359--375, 2022.

\end{thebibliography}

\end{document}